\documentclass[11pt]{amsart}
\usepackage{amsmath,amssymb,amsthm,mathtools}
\usepackage{mathrsfs}
\usepackage{enumitem}
\usepackage[colorlinks=true,linkcolor=blue,citecolor=blue,urlcolor=blue]{hyperref}
\numberwithin{equation}{section}
\newtheorem{theorem}{Theorem}[section]
\newtheorem{proposition}[theorem]{Proposition}
\newtheorem{lemma}[theorem]{Lemma}
\newtheorem{corollary}[theorem]{Corollary}

\theoremstyle{definition}
\newtheorem{definition}[theorem]{Definition}
\newtheorem{notation}[theorem]{Notation}
\newtheorem{example}[theorem]{Example}

\theoremstyle{remark}
\newtheorem{remark}[theorem]{Remark}

\newcommand{\HH}{\mathbb H}
\newcommand{\CC}{\mathbb C}
\newcommand{\RR}{\mathbb R}
\newcommand{\PP}{\mathbb P}
\newcommand{\QQ}{\mathbb Q}
\newcommand{\ZZ}{\mathbb Z}
\newcommand{\OO}{\mathcal O}
\newcommand{\HHc}{\mathbb H_{\mathbb C}}
\newcommand{\Scal}{\mathbb S}
\newcommand{\DeltaA}{\Delta_A}
\newcommand{\St}{\operatorname{St}}
\newcommand{\Zar}{\mathrm{Zar}}
\newcommand{\ord}{\operatorname{ord}}
\newcommand{\CritVal}{\operatorname{CritVal}}
\newcommand{\Newt}{\operatorname{Newt}}
\newcommand{\rtor}{r_{\mathrm{tor}}}

\title[Four-Point Picard Theorem]{The Four-Point Picard Theorem for Quaternionic Slice Regular Functions}
\author[G. Ren]{Guangbin Ren}
\address{Department of Mathematics, University of Science and Technology of China, Hefei 230026, China}
\email{rengb@ustc.edu.cn}
\author[X. Zhao]{Xin Zhao}
\address{Department of Mathematics, University of Science and Technology of China, Hefei 230026, China}
\email{zx130781@mail.ustc.edu.cn}
\thanks{Corresponding author: Xin Zhao.}
\date{}
\subjclass[2020]{Primary 30G35; Secondary 32H25, 32H30, 30D35, 14M25}
\keywords{quaternionic slice regular functions; Picard theorem; stem functions; quaternionic analysis; semi-abelian varieties; Nevanlinna theory; value distribution}
\hypersetup{pdftitle={The Four-Point Picard Theorem for Quaternionic Slice Regular Functions},pdfauthor={Guangbin Ren and Xin Zhao},pdfsubject={Four-point Picard theorem for quaternionic slice regular functions},pdfkeywords={quaternionic slice regular functions, Picard theorem, stem functions, quaternionic analysis, semi-abelian varieties, Nevanlinna theory, value distribution}}

\begin{document}

\begin{abstract}
An entire slice regular function $f:\HH\to\HH$ can omit four prescribed quaternionic values only in the affine-dependent case.  More precisely, four affinely independent omitted values force $f$ to be constant, while the converse follows from the plane-omission theorem of Bisi--Winkelmann.  The proof passes to the real-symmetric stem function.  For each omitted value a quadratic zero-divisor criterion gives a zero-free entire function $Q_j$, and the component normal to the affine span is governed by a square-discriminant identity.  Finite-order data are excluded by Hadamard factorization and a rigidity argument on the real axis.  In the general case, logarithmic Bloch--Ochiai places the $Q$-curve in a translated algebraic torus.  The Laurent-square case reduces to the finite-order contradiction, and the nonsquare case is excluded by an even-ramification argument together with the level-one truncated Second Main Theorem of Noguchi--Winkelmann--Yamanoi.
\end{abstract}

\maketitle

\section{Introduction}
\label{sec:introduction}

Quaternionic slice regular Picard phenomena are governed by the real affine geometry of omitted sets, rather than by isolated values alone.  The Bisi--Winkelmann results needed here are the following.

\begin{theorem}[Bisi--Winkelmann]\label{thm:bw-inputs}
The following hold.
\begin{enumerate}[label=\textup{(\roman*)}]
\item For every real affine $2$-plane $P\subset\HH\simeq\RR^4$, there exists a nonconstant entire slice regular function $f:\HH\to\HH$ such that
\[
f(\HH)=\HH\setminus P.
\]
\item If $q_1,\ldots,q_5\in\HH$ are not contained in any real affine $3$-space, then every entire slice regular function omitting $q_1,\ldots,q_5$ is constant.
\end{enumerate}
\end{theorem}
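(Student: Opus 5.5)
This theorem collects two results of Bisi--Winkelmann, so the plan reconstructs both parts through the stem-function formalism. An entire slice regular $f$ is written as $f(x+Iy)=F_1(z)+I F_2(z)$, where $z=x+iy$ and $F=F_1+\sqrt{-1}F_2$ is an entire stem function $\CC\to\HH\otimes_\RR\CC$ satisfying $F(\bar z)=\overline{F(z)}$. The value $q$ is omitted by $f$ exactly when the stem value $F(z)-q$ never meets the zero-divisor cone of $\HHc$. For each $z$ this is a quadratic condition: $f$ omits $q$ if and only if the complex "norm" $N(F-q)=(F-q)(F-q)^c$ is zero-free on $\CC$, where $N$ is a complex quadratic form on $\HHc\cong\CC^4$.

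For (i), I reduce by an affine change $q\mapsto aqb+c$ (which preserves slice regularity after composing suitably) to the plane $P=\CC_J$ spanned by $1$ and $J$, or to the plane spanned by $J,K$. I then write down an explicit example. Take $f(q)=e^{q}\,u$ for a unit $u$ orthogonal to $\CC_I$, or more generally a $*$-product of exponentials. I compute its image slice by slice: on each slice $\CC_I$, $e^{x+Iy}u$ ranges over $\RR_{>0}$ times the circle $\{e^{Iy}u\}$. The point is that the union over $I$ covers exactly the complement of a 2-plane. The key step is checking surjectivity onto $\HH\setminus P$. I expect to tune the construction (for instance, adding a constant in $P$, or using $e^{q}+q\,w$) so that the image misses $P$ and nothing else.

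For (ii), I argue by contradiction using the five norms $N(F-q_j)$, which are zero-free entire functions, say $e^{h_j}$. Expanding gives
\[
N(F-q_j)=N(F)-2\langle F,q_j\rangle_\CC+|q_j|^2 ,
\]
which is affine in the four complex coordinates of $\langle F,\cdot\rangle$ together with $N(F)$, so five values live in a 5-dimensional linear span. Affine independence of the $q_j$ (not lying in an affine 3-space) gives a nontrivial linear relation
\[
\sum_j c_j e^{h_j}=0
\]
with real $c_j$ that are not all zero and satisfy $\sum c_j=0$. A relation with at least two nonzero terms among exponentials forces, via Borel's theorem on linear relations among units, that some ratios $e^{h_j-h_k}$ are constant. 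Iterating this, one recovers that $N(F)$ and all $\langle F,q_j-q_1\rangle$ are constant. From these I deduce that $F$ itself is constant, using the nondegeneracy given by the $q_j-q_1$ spanning $\RR^4$.

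The main obstacle is the Borel step in (ii). Borel's theorem only yields that the summands split into groups whose sums vanish and within which the ratios are constant. I must show that every possible grouping forces the affine data to collapse, and this uses the general-position hypothesis combinatorially. I would first try to handle the partition types explicitly, showing that any proper subgroup relation contradicts the affine independence of the corresponding $q_j$.
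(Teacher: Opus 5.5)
The paper does not reprove this theorem; it imports (i) from Propositions~5.1 and~6.1 of Bisi--Winkelmann and (ii) from their Theorem~3.5. Your reconstruction has a genuine gap in each part. In (i), the function you name does not omit a plane. Since $\exp$ maps $\HH$ onto $\HH\setminus\{0\}$, the map $q\mapsto e^{q}u$ has image $(\HH\setminus\{0\})u=\HH\setminus\{0\}$, so it omits only one point. The ``tuning'' you defer is therefore the entire content of (i). Also, $w\mapsto awb+c$ does not preserve slice regularity for nonreal $a$; what is available is $f\mapsto fb+c$ and $f\mapsto a^{-1}f(aqa^{-1})a$. A working example is the $*$-exponential $f(q)=\sum_{n\ge0}q^n\mathbf i^n/n!$, whose stem is $F(z)=\cos z+\mathbf i\sin z$. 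Then $\langle F,F\rangle=1$ and $Q_c=1+\|c\|^2-2(c_0\cos z+c_1\sin z)$. This is a positive constant if $c_0=c_1=0$. Otherwise it equals $1+\|c\|^2-2R\cos(z-\phi)$ with $R>0$, which has zeros because $\cos$ maps onto $\CC$. By Lemma~\ref{lem:zero-divisor}, $f(\HH)=\HH\setminus\operatorname{span}_{\RR}\{\mathbf j,\mathbf k\}$. Now replace $\mathbf i$ by $v\in\Scal$ and pass to $f(q)b+c$ with $\|b\|=1$. This gives image $\HH\setminus\bigl(c+(\RR b+\RR vb)^{\perp}\bigr)$. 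Every $2$-plane through $0$ has the form $\RR b+\RR vb$: for an orthonormal basis $b,e'$ take $v=e'\overline b$. Hence every affine $2$-plane occurs as the omitted set.

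In (ii), the linear relation you invoke does not exist, and the hypothesis works in the opposite direction. Write $F=\sum_\ell F_\ell e_\ell$. The functions $Q_j=N(F)-2\langle F,q_j\rangle+\|q_j\|^2$ lie in the span of $N(F),F_0,\dots,F_3$ and the constants, which is six-dimensional. Even ignoring constants, five functions in a five-dimensional space are not forced to be dependent. For constants with $\sum_jc_j=0$ one gets $\sum_jc_jQ_j=-2\langle F,v\rangle+\sum_jc_j\|q_j\|^2$, where $v=\sum_jc_jq_j$. Affine independence of $q_1,\dots,q_5$ says precisely that $v\neq0$ whenever $c\neq0$. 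So $\sum_jc_je^{h_j}\equiv0$ would force $\langle F,v\rangle$ to be constant, which you have no reason to expect. A linear relation of this kind, with a constant term, arises exactly in the affinely dependent case. What affine independence actually gives is invertibility of the $5\times5$ system. Then $N(F)$ and $F_0,\dots,F_3$ are affine functions of $Q_1,\dots,Q_5$, and the identity $N(F)=\sum_\ell F_\ell^2$ becomes a nontrivial quadratic polynomial relation among the five units. One must then show that an entire curve of units satisfying this quadratic relation is constant. For example, one can apply Borel's theorem to the monomials $Q_jQ_k$, $Q_j$, $1$ and analyse the possible groupings. That is the substance of Bisi--Winkelmann's Theorem~3.5, and your sketch neither produces the relation nor carries out this analysis. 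This quadratic constraint is exactly what is absent for four affinely independent points (Proposition~\ref{prop:normal-form}), which is why the paper needs the toroidal argument there.
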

The plane-omission assertion follows from \cite[Proposition~5.1, Proposition~6.1 and the subsequent remark]{BisiWinkelmann2020QuaternionicPicard}; the five-point assertion is \cite[Theorem~3.5]{BisiWinkelmann2020QuaternionicPicard}.

Part (i) of Theorem~\ref{thm:bw-inputs} gives the sharpness direction in Corollary~\ref{cor:exact-four-value}.  The unresolved finite configuration is a quadruple spanning a real affine $3$-space; the theorem below proves its rigidity.

The proof is written in the stem-function formalism for slice regular functions, following Cullen, Gentili--Struppa, and subsequent work \cite{Cullen1965Integral,GentiliStruppa2006Cullen,GentiliStruppa2007NewTheory,GentiliStoppato2008Zeros,GentiliStoppatoStruppa2013Regular,ColomboGentiliSabadiniStruppa2009Extension,ColomboSabadiniStruppa2011FunctionalCalculus,GhiloniPerotti2011Alternative}.  Jensen-type formulae and zero-counting results for slice regular functions provide related value-distribution background; see \cite{Perotti2020Jensen,BisiWinkelmann2020Harmonicity}.  The main result is the following.

\begin{theorem}[Four-point Picard theorem]\label{thm:main}
Let $a_0,a_1,a_2,a_3\in\HH$ be affinely independent. If
\[
f:\HH\to\HH
\]
is an entire slice regular function omitting $a_0,a_1,a_2,a_3$, then $f$ is constant.
\end{theorem}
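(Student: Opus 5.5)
We follow the strategy outlined in the abstract and work throughout with the stem function. Write $f=\mathcal I(F)$ with $F=F_1+\iota F_2:\CC\to\HH\otimes_\RR\CC$ an entire stem function satisfying $F(\bar z)=\overline{F(z)}$. The first step is to translate each omitted value $a_j$ into a zero-divisor condition. The point $f(x+Iy)$ equals $a_j$ for some imaginary unit $I$ exactly when $F(z)-a_j$ is a zero divisor in $\HH\otimes\CC$. For real-symmetric stems this means that the quadratic quantity
\[
Q_j(z)=(F_1-a_j)^2_{\text{norm}}-\dots,
\]
namely the $\CC$-valued "symmetrized norm" $N(F-a_j)=(F_1-a_j)\overline{(F_1-a_j)}-F_2\overline{F_2}+\iota(\cdots)$, vanishes. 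This is the slice-regular normal function $(f-a_j)^s$ restricted to $\CC$. The precise form is to be fixed from the standard identity $N(F-a)(z)=0\iff F(z)-a$ is a zero divisor. Consequently each $Q_j$ is a zero-free entire function, $Q_j=e^{g_j}$, and each is real on $\RR$ by symmetry.

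Next we use affine independence. The differences $Q_j-Q_0$ are affine in $F$ (the quadratic part cancels), namely $Q_j-Q_0=-2\langle F,a_j-a_0\rangle_\CC+|a_j|^2-|a_0|^2$, with the complexified real inner product. Because $a_1-a_0,a_2-a_0,a_3-a_0$ are linearly independent, these three relations determine the three components of $F-a_0$ in the span directions as linear combinations of the $e^{g_j}$. Write $\nu$ for the unit normal to the affine 3-space. The remaining normal component $\phi=\langle F-a_0,\nu\rangle_\CC$ is then forced by $Q_0$ itself. Substituting the tangential components gives an identity of the form
\[
\phi^2 = R(e^{g_0},\dots,e^{g_3}),
\]
where $R$ is a fixed quadratic polynomial. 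We also have to account for the $F_2$-part. Since the norm equation involves both $F_1$ and $F_2$, the computation yields a square-discriminant identity: a specific quadratic form in $(Q_0,\dots,Q_3)$ must be the square of an entire function.

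We then split into two regimes. In the finite-order case each $g_j$ is a polynomial. Hadamard factorization of $\phi$ together with the square identity forces $R(e^{g})$ to be a perfect square in the exponential-polynomial ring. Reality on $\RR$, combined with the constraint that $F_2$ is odd and $F_1$ is even with respect to conjugation symmetry, then pins down the $g_j$ to be constant. This is the rigidity on the real axis. In the general case we consider the holomorphic curve $z\mapsto[Q_0:\dots:Q_3]$ into $(\CC^*)^3$ and apply logarithmic Bloch--Ochiai to the subvariety cut out by the requirement that $R$ have an entire square root; equivalently, we lift to the double cover branched along $\{R=0\}$, which is of log-general type unless degenerate. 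This places the curve in a translate of a subtorus. On that torus, if $R$ restricts to a Laurent square we are back to exponential identities that reduce to the finite-order argument. If $R$ is not a square, every zero of $R\circ Q$ must have even multiplicity, since $R\circ Q=\phi^2$. The level-one truncated Second Main Theorem of Noguchi--Winkelmann--Yamanoi for curves in semi-abelian varieties then gives $T(r)\le N^{(1)}(r,R\circ Q)+o(T(r))$, whereas even ramification gives $N^{(1)}\le\frac12 N\le \frac12 T+o(T)$. This is a contradiction unless the curve is constant.

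The main obstacle is the nonsquare case. One has to verify that the divisor $\{R=0\}$ on the translated torus satisfies the hypotheses of the truncated SMT, in particular that it is not torus-invariant and that its closure has the right boundary behaviour in a compactification, so that $T_{R}$ is comparable to the characteristic. It must also be checked that "even multiplicity" survives restriction to the subtorus. Finally, once the $Q_j$ are constant, $F$ is constant and hence $f$ is constant.
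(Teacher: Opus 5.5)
Your outline reproduces the paper's architecture, but the two steps that actually produce the contradiction are missing.

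\textbf{Finite-order case.} Everything rests on this case, since the Laurent-square case is reduced to it, yet you give no mechanism for it. The claim that Hadamard factorization forces $R(e^{g})$ to be a square in an exponential-polynomial ring is unjustified. Even if it held, it would give no obstruction: being a square is exactly the case where algebra says nothing. ``Reality on $\RR$ pins down the $g_j$'' is the conclusion, not an argument.

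The missing idea is metric. For real $x$ one has $F(x)\in\HH$ and $Q_j(x)=\|F(x)-a_j\|^2$, so Cauchy--Schwarz gives $|Q_i(x)-Q_k(x)-\|a_i-a_k\|^2|\le 2\|a_i-a_k\|\sqrt{Q_k(x)}$. Write $Q_j=e^{P_j}$ with $P_j\in\RR[z]$, and let $x$ run along a real direction where a nonconstant $P_k\to\pm\infty$. This forces either $P_i=P_k$ for all $i$, or $P_i\equiv\log\|a_i-a_k\|^2$ for all $i\neq k$. Rebasing at $a_k$, the square identity becomes $\tau^2=e^{P_k}-C$ or $(\tau e^{-P_k/2})^2=1-De^{P_k}$ with $C,D>0$. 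Both right-hand sides have simple zeros: choose a level $\log C+2\pi\mathrm i m$ that is not one of the finitely many critical values of $P_k$.

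\textbf{Laurent-square case.} ``If $R$ restricts to a Laurent square we are back to exponential identities'' fails as stated. Suppose $D_M(u)=\DeltaA(\lambda_0u^{m_0},\dots,\lambda_3u^{m_3})$ is a Laurent unit times a square, or $D_M\equiv0$. Then $D_M(E)=\tau^2$ holds for every curve and constrains nothing, and the actual $E$ has unrestricted growth, so the finite-order theorem does not apply to it. The paper instead substitutes a finite-order dense curve $\gamma_\alpha(z)=(e^{\alpha_1z},\dots,e^{\alpha_dz})$, with $\alpha\in\RR^d$ linearly independent over $\QQ$. It then uses the \emph{converse} of the discriminant reduction: real-symmetric zero-free $\widetilde Q_j$ together with a real-symmetric entire root of $\DeltaA(\widetilde Q)$ define a stem function omitting the four points. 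This manufactures a nonconstant finite-order omission datum, which contradicts the finite-order case. Making the auxiliary root real-symmetric requires three further inputs:
\begin{enumerate}
\item the positive-real choice of the torus translate ($\lambda_j>0$, $E(\RR)\subset(\RR_{>0})^d$);
\item real descent of the Laurent square;
\item a sign argument showing the unit is $a\,u^{\ell_0}$ with $a>0$.
\end{enumerate}
None of this is in your plan.

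\textbf{Nonsquare case.} The level-one truncated Second Main Theorem is a statement about \emph{reduced} divisors. Even multiplicity transfers only to the branch part $B$ of $D_M$, the product of its irreducible factors of odd exponent. On $\{R=0\}_{\mathrm{red}}$ the curve may cross the square factors with multiplicity one, so $N_1\le\frac12 N$ fails there.

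Moreover, bounded characteristic contradicts Zariski density only for a big line bundle. This is why one must quotient by the connected stabilizer of $\{B=0\}$, check that density and multiplicity $\ge 2$ survive the quotient, and then use that trivial stabilizer makes the Newton polytope full-dimensional. You list this as an open obstacle rather than resolving it.

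\textbf{Setup.} The curve has to be $Q:\CC\to(\CC^*)^4$, not $[Q_0:\dots:Q_3]$. The discriminant $\DeltaA$ is inhomogeneous: it involves the constants $\|a_j-a_0\|^2$ and mixes degrees one and two, so the square identity is not a relation among ratios. Bloch--Ochiai is applied to $Q$ in the torus directly. The double cover branched along $\{R=0\}$ is not a semi-abelian variety, so logarithmic Bloch--Ochiai does not apply to it.
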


Together with the Bisi--Winkelmann plane-omission construction, this gives the exact four-value criterion.

\begin{corollary}[Exact four-value omission criterion]\label{cor:exact-four-value}
Let $a_0,a_1,a_2,a_3\in\HH$. There exists a nonconstant entire slice regular function omitting all four values if and only if the four points are affinely dependent.
\end{corollary}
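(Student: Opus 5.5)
The corollary follows by combining Theorem~\ref{thm:main} with part~(i) of Theorem~\ref{thm:bw-inputs}, one for each direction. The only geometric input is the elementary fact that four points of $\HH\simeq\RR^4$ are affinely dependent exactly when their affine span has dimension at most $2$.

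For the ``only if'' direction, I argue by contraposition. Suppose $a_0,a_1,a_2,a_3$ are affinely independent and $f$ is an entire slice regular function omitting all four values. Theorem~\ref{thm:main} then says that $f$ is constant. Hence a nonconstant omitting function can exist only when the points are affinely dependent.

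For the ``if'' direction, suppose the four points are affinely dependent. Then the vectors $a_1-a_0,\,a_2-a_0,\,a_3-a_0$ span a real linear subspace $V\subset\RR^4$ with $\dim V\le 2$. I enlarge $V$ by adding vectors from a basis of $\RR^4$ until I obtain a real linear $2$-plane $W\supseteq V$; this is possible because $\dim\RR^4=4\ge 2$. The real affine $2$-plane $P:=a_0+W$ then contains all four points $a_0,\dots,a_3$. By Theorem~\ref{thm:bw-inputs}(i), there is a nonconstant entire slice regular function $f$ with $f(\HH)=\HH\setminus P$. In particular $f$ omits each $a_j$.

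Neither direction has a genuine obstacle, since all the analytic difficulty sits inside Theorem~\ref{thm:main}. The one point needing care is the degenerate case, where the points span only a line or a single point (for instance when some $a_j$ coincide). This is handled by enlarging the span to a full $2$-plane as above, so that part~(i), which is stated only for $2$-planes, applies.
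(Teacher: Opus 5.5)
Your proposal is correct and follows the paper's own proof. Theorem~\ref{thm:main} excludes the affinely independent case, and in the affinely dependent case you place the points in a real affine $2$-plane and apply the Bisi--Winkelmann plane omission, Theorem~\ref{thm:bw-inputs}(i), with your explicit enlargement of the span to a full $2$-plane just spelling out what the paper states in one line.
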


Theorem~\ref{thm:main} is not a formal consequence of the five-point theorem, since four affinely independent points always lie in a real affine $3$-space.  The obstruction is encoded by a single normal coordinate after complexifying this affine span.  Let
\[
F:\CC\to\HHc
\]
be the real-symmetric stem function associated with $f$.  For $c\in\HH$ set
\[
Q_c(z)=\langle F(z)-c,F(z)-c\rangle,
\]
where the form is the complex bilinear extension of the Euclidean inner product on $\HH$.  The Bisi--Winkelmann zero-divisor criterion identifies omission of $c$ with zero-freeness of $Q_c$.  Thus the four omitted values give four zero-free entire functions of one complex variable.

After translating the values, put $p_0=0$ and $p_j=a_j-a_0$ for $1\le j\le3$.  Let
\[
V=\operatorname{span}_{\RR}\{p_1,p_2,p_3\},\qquad \HHc=V_{\CC}\oplus\CC n,
\]
with $\|n\|=1$, and write $F=W+\tau n$.  For
\[
Q_j(z)=\langle F(z)-p_j,F(z)-p_j\rangle\qquad(0\le j\le3)
\]
one has
\[
\langle W,p_j\rangle=\frac{Q_0+\|p_j\|^2-Q_j}{2}=:b_j(Q).
\]
If $G=(\langle p_i,p_j\rangle)_{1\le i,j\le3}$ is the Gram matrix, then
\[
W(Q)=\sum_{i,k=1}^3(G^{-1})_{ik}b_k(Q)p_i,
\qquad
\langle W,W\rangle=b(Q)^TG^{-1}b(Q).
\]
Consequently
\[
\tau^2=Q_0-b(Q)^TG^{-1}b(Q)=:\DeltaA(Q_0,Q_1,Q_2,Q_3).
\]
Section~\ref{sec:discriminant} proves the converse: four-value omission is equivalent to the existence of real-symmetric zero-free functions $Q_0,\ldots,Q_3$ for which $\DeltaA(Q)$ has a real-symmetric entire square root.  Proposition~\ref{prop:normal-form} records the corresponding normal-square coordinate form.

Affine independence is used through the invertibility of the Gram matrix: it makes the tangential component $W$ a linear function of the four quadratic data and leaves the normal square as the remaining constraint.  If the four points are affinely dependent, this reconstruction degenerates and the normal coordinate no longer yields a four-point obstruction; the plane-omission part of Theorem~\ref{thm:bw-inputs} then supplies nonconstant examples.  The five-point theorem of Bisi--Winkelmann rests on an algebraic constraint among five quadratic functions.  By contrast, Proposition~\ref{prop:normal-form} shows that no such algebraic constraint exists for four affinely independent points; the residual condition is the square-discriminant relation.

Finite-order $Q$-data are handled by one-variable methods.  In that case $Q_j=e^{P_j}$ with $P_j\in\RR[z]$, by the zero-free case of the Hadamard factorization theorem \cite[Ch.~I, \S\S~8--9]{Boas1954EntireFunctions,Levin1980Zeros}.  Since
\[
Q_j(x)=\|F(x)-p_j\|^2\qquad (x\in\RR),
\]
a real-axis distance estimate leaves only two asymptotic possibilities for the polynomials $P_j$.  Each alternative makes the square-discriminant identity force either $e^P-C$ or $1-Ce^P$ to be an entire square, with $P$ nonconstant and $C\ne0$, which is impossible.  The finite-order statement used later for auxiliary $Q$-data contains the polynomial and finite-order cases stated as Theorems~\ref{thm:polynomial-case} and~\ref{thm:finite-order}.

For unrestricted growth, let
\[
Q=(Q_0,Q_1,Q_2,Q_3):\CC\to(\CC^*)^4.
\]
Logarithmic Bloch--Ochiai gives the Zariski closure of $Q(\CC)$ as a translate of an algebraic subtorus; the form used here is Noguchi's semi-abelian theorem \cite[Main Theorem~(i)]{Noguchi1998HolomorphicCurves}, together with the formulations in \cite{NoguchiWinkelmannYamanoi2002SMT,DethloffLu2001LogarithmicJetBundles}.  Positivity of $Q_j(x)$ on the real axis allows the translate to be chosen positive real.  In torus coordinates,
\[
Q_j=\lambda_jE^{m_j},\qquad \lambda_j>0,
\]
with $E:\CC\to(\CC^*)^d$ Zariski dense.  The discriminant becomes a Laurent polynomial
\[
D_M(u)=\DeltaA(\lambda_0u^{m_0},\lambda_1u^{m_1},\lambda_2u^{m_2},\lambda_3u^{m_3}),
\]
and $D_M(E)=\tau^2$.  The Laurent-square case is reduced to the finite-order contradiction by pulling back along finite-order dense exponential curves in the torus.

For the nonsquare Laurent case, let $B_D$ denote the squarefree branch part of a Laurent polynomial $D$.  From $D(E)=\tau^2$ every zero of $B_D(E)$ has even order.  Quotienting by the connected stabilizer of the branch divisor gives a Zariski-dense curve in a torus and a reduced divisor with trivial stabilizer whose pullback has multiplicity at least two everywhere.  The level-one truncated Second Main Theorem of Noguchi--Winkelmann--Yamanoi gives
\[
T_G(r;L(\overline{\mathcal D}))\le N_1(r;G^*\mathcal D)+\varepsilon T_G(r;L(\overline{\mathcal D}))
\]
for reduced divisors on semi-abelian varieties, in the notation of Section~\ref{sec:laurent-ramification}, by \cite[Main Theorem~(iii)]{NoguchiWinkelmannYamanoi2008SMTII}.  The ramification hypothesis gives $N_1\le \frac12N$, and the First Main Theorem gives $N\le T_G+O(1)$.  For $\varepsilon<1/2$ the characteristic is bounded.  Trivial connected stabilizer makes the relevant Newton polytope full-dimensional, so the divisor closure is big on a smooth toric compactification; bounded characteristic with respect to a big line bundle contradicts Zariski density.  Positive toric rank is therefore impossible.  This proves Theorem~\ref{thm:main}.  For background on Nevanlinna theory for semi-abelian varieties, see \cite{GriffithsKing1973Nevanlinna,NoguchiWinkelmannYamanoi2007Degeneracy,NoguchiWinkelmann2014Nevanlinna,Noguchi1981LogDerivatives}.

The paper is organized as follows.  Sections~\ref{sec:stem} and~\ref{sec:discriminant} establish the zero-divisor criterion and the discriminant reduction.  Section~\ref{sec:finite-order} treats polynomial and finite-order data.  Section~\ref{sec:toroidal} gives the toroidal reduction, Section~\ref{sec:laurent-ramification} proves the Laurent ramification obstruction, and Section~\ref{sec:proof} completes the proof.  Appendix~\ref{app:technical} contains the toric and Nevanlinna lemmas used in Sections~\ref{sec:toroidal} and~\ref{sec:laurent-ramification}; Appendix~\ref{app:external-inputs} lists the external results.

\section{Stem functions and quadratic zero divisors}
\label{sec:stem}

\subsection{Complexified quaternions}

Let
\[
\HH=\RR+\RR\mathbf i+\RR\mathbf j+\RR\mathbf k
\]
be the quaternion algebra, and let
\[
\HHc:=\HH\otimes_{\RR}\CC
\]
be its complexification. The scalar imaginary unit of $\CC$ is denoted by $\mathrm i$, to distinguish it from the quaternionic unit $\mathbf i$.

Every element $Z\in\HHc$ can be written uniquely as
\[
Z=z_0+z_1\mathbf i+z_2\mathbf j+z_3\mathbf k,
\qquad z_\ell\in\CC.
\]
We extend the Euclidean inner product on $\HH\simeq\RR^4$ complex bilinearly:
\[
\langle Z,W\rangle=\sum_{\ell=0}^3z_\ell w_\ell.
\]
This form is complex bilinear, not Hermitian. Its associated quadratic form is
\[
\langle Z,Z\rangle=z_0^2+z_1^2+z_2^2+z_3^2.
\]

Let
\[
\sigma:\HHc\to\HHc
\]
denote the anti-linear involution induced by complex conjugation on the scalar factor and fixing $\HH$. Thus
\[
\sigma\left(\sum_{\ell=0}^3 z_\ell e_\ell\right)=\sum_{\ell=0}^3\overline{z_\ell}e_\ell,
\]
where $e_0=1$, $e_1=\mathbf i$, $e_2=\mathbf j$, $e_3=\mathbf k$.

We write $\langle\cdot,\cdot\rangle_{\RR}$ for the Euclidean inner product on $\HH$, and $\langle\cdot,\cdot\rangle$ for its complex-bilinear extension to $\HHc$.  Quaternionic conjugation is denoted by $q\mapsto\overline q$ for $q\in\HH$, while complex conjugation of the variable is denoted by $z\mapsto\overline z$; the involution $\sigma$ is the induced conjugation on $\HHc$.

\subsection{Stem functions}

Let
\[
F:\CC\to\HHc
\]
be an entire holomorphic map. The map $F$ is called real-symmetric if
\[
F(\overline z)=\sigma(F(z)).
\]
Equivalently, writing
\[
F(z)=F_1(z)+\mathrm i F_2(z),\qquad F_1(z),F_2(z)\in\HH,
\]
one has
\[
F_1(\overline z)=F_1(z),\qquad F_2(\overline z)=-F_2(z).
\]

Let
\[
\Scal=\{J\in\HH:J^2=-1\}
\]
be the sphere of imaginary units. A real-symmetric entire stem function $F=F_1+\mathrm iF_2$ induces an entire slice regular function
\[
f:\HH\to\HH
\]
by
\[
f(x+yJ)=F_1(x+\mathrm i y)+JF_2(x+\mathrm i y),\qquad J\in\Scal.
\]
Conversely, every entire slice regular function arises in this way from a unique real-symmetric entire stem function.  This stem-function viewpoint will be used throughout; see \cite{GhiloniPerotti2011Alternative,GentiliStoppatoStruppa2013Regular} for broader formulations.

\subsection{Quadratic omission functions}

Let $F:\CC\to\HHc$ be a real-symmetric entire stem function, and let $c\in\HH$.  Define
\[
Q_c(z):=\langle F(z)-c,F(z)-c\rangle.
\]
Then $Q_c$ is entire and real-symmetric:
\[
Q_c(\overline z)=\overline{Q_c(z)}.
\]
The Bisi--Winkelmann zero-divisor criterion, cf. \cite[Proposition~2.2]{BisiWinkelmann2020QuaternionicPicard}, is used in the following form.  In the standard slice-regular notation, $Q_c$ is the stem of the normal function $N(f-c)=(f-c)*(f-c)^c$; see \cite{GentiliStoppato2008Zeros,GentiliStoppatoStruppa2013Regular} for the zero structure of normal functions.

\begin{lemma}[Omitted values and quadratic zero divisors]\label{lem:zero-divisor}
Let $F=F_1+\mathrm iF_2$ be a real-symmetric entire stem function, and let $f$ be the induced entire slice regular function. For $c\in\HH$, the following are equivalent:
\begin{enumerate}[label=\textup{(\roman*)}]
\item $f$ omits $c$;
\item $Q_c(z)=\langle F(z)-c,F(z)-c\rangle$ has no zero in $\CC$.
\end{enumerate}
\end{lemma}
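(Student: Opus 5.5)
The plan is to unwind both conditions into the same pair of real equations on the two quaternionic components of the stem of $f-c$. Put $G:=F-c=G_1+\mathrm iG_2$, where $G_1=F_1-c$ and $G_2=F_2$. Since $c\in\HH$ is real for $\sigma$, $G$ is again a real-symmetric stem, and it induces the slice regular function $f-c$. For $z=x+\mathrm iy$ and $J\in\Scal$, the defining formula gives
\[
f(x+yJ)-c=G_1(z)+JG_2(z).
\]
Expanding the complex-bilinear form with $G_1,G_2\in\HH$ gives
\[
Q_c=\langle G,G\rangle=\|G_1\|^2-\|G_2\|^2+2\mathrm i\langle G_1,G_2\rangle_{\RR}.
\]
Hence $Q_c(z)=0$ if and only if $\|G_1(z)\|=\|G_2(z)\|$ and $\langle G_1(z),G_2(z)\rangle_{\RR}=0$. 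Write $\mathrm{Re}$ for the real part of a quaternion. The only algebraic facts needed are $\langle p,q\rangle_{\RR}=\mathrm{Re}(p\overline q)$, multiplicativity of the norm, and the fact that $J\in\Scal$ exactly when $\|J\|=1$ and $\mathrm{Re}\,J=0$.

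\emph{(i)$\Rightarrow$(ii), by contraposition.} Suppose $f(x+yJ)=c$ for some $x,y\in\RR$ and $J\in\Scal$, and set $z=x+\mathrm iy$. Then $G_1(z)=-JG_2(z)$. Since $\|J\|=1$, we get $\|G_1(z)\|=\|G_2(z)\|$. Also
\[
\langle G_1,G_2\rangle_{\RR}=-\mathrm{Re}(JG_2\overline{G_2})=-\|G_2\|^2\,\mathrm{Re}\,J=0 .
\]
So $Q_c(z)=0$.

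\emph{(ii)$\Rightarrow$(i), by contraposition.} Suppose $Q_c(z_0)=0$ with $z_0=x+\mathrm iy$, and abbreviate $G_\ell=G_\ell(z_0)$. There are two cases.
\begin{enumerate}[label=\textup{(\alph*)}]
\item If $G_2=0$, the norm equality forces $G_1=0$. Then $f(x+yJ)=c$ for any $J\in\Scal$.
\item If $G_2\neq0$, define $J:=-G_1G_2^{-1}$. Then $\|J\|=\|G_1\|/\|G_2\|=1$. Moreover
\[
\mathrm{Re}\,J=-\mathrm{Re}(G_1\overline{G_2})/\|G_2\|^2=-\langle G_1,G_2\rangle_{\RR}/\|G_2\|^2=0,
\]
so $J\in\Scal$. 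By construction $G_1+JG_2=0$, so $f(x+yJ)=c$.
\end{enumerate}

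The point $x+yJ$ is a legitimate point of $\HH$ for every real $y$, whatever its sign, because the induced function is well defined: $x+yJ=x+(-y)(-J)$, and real-symmetry gives $G_1(\overline z)=G_1(z)$ and $G_2(\overline z)=-G_2(z)$, so both representations yield the same value. When $y=0$, real-symmetry also forces $G_2(x)=0$, so case (a) applies and $f(x)=c$ directly.

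There is no serious obstacle here. The only step requiring care is case (b): choosing $J=-G_1G_2^{-1}$ and checking that it is a unit imaginary quaternion. This is exactly where the vanishing of the imaginary part of $Q_c$, namely orthogonality of $G_1$ and $G_2$, is used, together with the vanishing of the real part, namely equality of norms.
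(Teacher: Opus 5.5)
Your proposal is correct and follows the paper's proof essentially step for step: the same expansion $Q_c=\|a\|^2-\|b\|^2+2\mathrm i\langle a,b\rangle_{\RR}$ with $a=F_1(z)-c$, $b=F_2(z)$, the same case split $b=0$ versus $b\neq0$ with the choice $J=-ab^{-1}$, and the same converse from $a=-Jb$. Your identity $\mathrm{Re}(JG_2\overline{G_2})=\|G_2\|^2\,\mathrm{Re}\,J$ simply spells out the paper's step $\langle Jb,b\rangle_{\RR}=0$, and your remark on $x+yJ=x+(-y)(-J)$ matches the paper's comment on the spherical parametrization.
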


\begin{proof}
Fix $z=x+\mathrm i y$ and write
\[
F(z)=F_1(z)+\mathrm iF_2(z),\qquad F_1(z),F_2(z)\in\HH.
\]
The corresponding slice values over the sphere associated with $z$ are
\[
f(x+yJ)=F_1(z)+JF_2(z),\qquad J\in\Scal.
\]
The parametrization $(z,J)\leftrightarrow(\overline z,-J)$ gives the same quaternionic point, so the criterion may be checked on these spherical fibers.  When $z=x\in\RR$, real-symmetry gives $F_2(x)=0$, and the criterion reduces to $Q_c(x)=\|F_1(x)-c\|^2=0$ if and only if $f(x)=c$.  On a nondegenerate fiber, $f$ takes the value $c$ precisely when there exists $J\in\Scal$ such that
\[
F_1(z)+JF_2(z)=c.
\]
Set
\[
a:=F_1(z)-c,\qquad b:=F_2(z).
\]
The question is whether there is $J\in\Scal$ such that
\[
a+Jb=0.
\]
On the other hand,
\[
Q_c(z)=\langle a+\mathrm i b,a+\mathrm i b\rangle
=\|a\|^2-\|b\|^2+2\mathrm i\langle a,b\rangle_{\RR}.
\]
Therefore $Q_c(z)=0$ if and only if
\[
\|a\|=\|b\|,\qquad \langle a,b\rangle_{\RR}=0.
\]

Assume first that $Q_c(z)=0$. If $b=0$, then $a=0$, so $F_1(z)=c$, and $f$ takes the value $c$. If $b\neq0$, define
\[
J:=-ab^{-1}.
\]
Then
\[
|J|=\frac{|a|}{|b|}=1,
\]
and
\[
\operatorname{Re}J=-\operatorname{Re}(ab^{-1})=-\|b\|^{-2}\operatorname{Re}(a\overline b)=-\|b\|^{-2}\langle a,b\rangle_{\RR}=0.
\]
Hence $J\in\Scal$, and $Jb=-a$. Thus $F_1(z)+JF_2(z)=c$.

Conversely, suppose $F_1(z)+JF_2(z)=c$ for some $J\in\Scal$. Then $a=-Jb$. Left multiplication by $J$ is an orthogonal transformation of $\HH$, so $\|a\|=\|b\|$. Moreover,
\[
\langle a,b\rangle_{\RR}=-\langle Jb,b\rangle_{\RR}=0.
\]
Thus $Q_c(z)=0$.

$Q_c$ has a zero if and only if $f$ takes the value $c$, as claimed.
\end{proof}

\section{The four-point discriminant}
\label{sec:discriminant}

Let $a_0,a_1,a_2,a_3\in\HH$ be affinely independent. Translating the target by $-a_0$, we may assume that
\[
p_0=0,\qquad p_j=a_j-a_0\quad (j=1,2,3).
\]
Thus $p_1,p_2,p_3$ are real-linearly independent in $\HH\simeq\RR^4$.

Let
\[
V:=\operatorname{span}_{\RR}\{p_1,p_2,p_3\}.
\]
Then $V$ is a three-dimensional real subspace of $\HH$.  Choose a unit vector
\[
n\in V^\perp,\qquad \|n\|=1.
\]
Let $V_{\CC}:=V\otimes_{\RR}\CC$.  Then
\[
\HHc=V_{\CC}\oplus\CC n.
\]
Let $G=(G_{ij})_{1\le i,j\le3}$ be the Gram matrix
\[
G_{ij}:=\langle p_i,p_j\rangle.
\]
Since $p_1,p_2,p_3$ are real-linearly independent, $G$ is positive definite and invertible.

Let $F:\CC\to\HHc$ be a real-symmetric entire stem function. With respect to the decomposition above, write
\[
F=W+\tau n,
\]
where
\[
W:\CC\to V_{\CC},\qquad \tau:\CC\to\CC
\]
are entire and real-symmetric. For $j=0,1,2,3$, define
\[
Q_j(z):=\langle F(z)-p_j,F(z)-p_j\rangle.
\]

\begin{definition}[Four-point omission datum]\label{def:omission-datum}
A four-point omission datum is a pair
\[
(F,(p_0,p_1,p_2,p_3))
\]
where $F:\CC\to\HHc$ is a real-symmetric entire stem function, the points $p_0,p_1,p_2,p_3\in\HH$ are affinely independent and normalized by $p_0=0$, and each function
\[
Q_j=\langle F-p_j,F-p_j\rangle,
\qquad j=0,1,2,3,
\]
is zero-free on $\CC$.  The datum is called nonconstant if $F$ is nonconstant.
\end{definition}

\begin{remark}
For a four-point omission datum, the associated $Q$-data satisfy the square-discriminant identity of Proposition~\ref{prop:square-identity}; this is a consequence of the decomposition $F=W+\tau n$, not an additional hypothesis.
\end{remark}

\begin{lemma}[Gram reconstruction]\label{lem:gram}
For $j=1,2,3$, define
\[
b_j(Q):=\frac{Q_0+\|p_j\|^2-Q_j}{2}
\]
and let $b(Q)=(b_1(Q),b_2(Q),b_3(Q))^T$. Then
\[
\langle W,p_j\rangle=b_j(Q),\qquad j=1,2,3.
\]
Consequently,
\[
W=\sum_{i,k=1}^3(G^{-1})_{ik}b_k(Q)p_i,
\]
and
\[
\langle W,W\rangle=b(Q)^TG^{-1}b(Q).
\]
\end{lemma}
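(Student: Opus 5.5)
The identity $\langle W,p_j\rangle=b_j(Q)$ comes from expanding the complex-bilinear quadratic forms $Q_0$ and $Q_j$ and subtracting them. The reconstruction of $W$ is then a Gram-matrix inversion inside $V_{\CC}$.

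\emph{Step 1.} Since $p_0=0$, we have $Q_0=\langle F,F\rangle$. By bilinearity and symmetry of $\langle\cdot,\cdot\rangle$, and because $\langle p_j,p_j\rangle=\|p_j\|^2$ for the real vector $p_j$,
\[
Q_j=\langle F,F\rangle-2\langle F,p_j\rangle+\|p_j\|^2 .
\]
Hence $\langle F,p_j\rangle=\frac{Q_0+\|p_j\|^2-Q_j}{2}=b_j(Q)$. Now write $F=W+\tau n$. The unit vector $n$ lies in $V^\perp$, so $\langle n,p_j\rangle=0$, and therefore $\langle F,p_j\rangle=\langle W,p_j\rangle$. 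This gives the first claim.

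\emph{Step 2.} The vectors $p_1,p_2,p_3$ form a real basis of $V$, so they also form a complex basis of $V_{\CC}$. Write $W=\sum_i c_ip_i$ with entire coefficients $c_i$. Pairing with $p_k$ gives
\[
b_k(Q)=\sum_i G_{ik}c_i ,
\]
that is, $Gc=b(Q)$ because $G$ is symmetric. Since $G$ is positive definite, it is invertible, so $c=G^{-1}b(Q)$. This is exactly the stated formula for $W$.

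\emph{Step 3.} Compute the quadratic form:
\[
\langle W,W\rangle=\sum_{i,k}c_ic_kG_{ik}=c^TGc=b^TG^{-1}GG^{-1}b=b^TG^{-1}b .
\]

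I do not expect a real obstacle. The only points needing care are the ones above: the form is bilinear rather than Hermitian, so no complex conjugates appear; the $p_j$ are real, so $\langle p_j,p_j\rangle=\|p_j\|^2$; and $n\perp V$ holds for the complexified pairing, which follows by bilinear extension of $n\perp V$ in $\HH$.
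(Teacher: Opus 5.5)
Your proof is correct and follows essentially the same argument as the paper. The only difference is the order of steps: you subtract the expansions of $Q_0$ and $Q_j$ at the level of $F$ and then use $n\perp V$ to replace $\langle F,p_j\rangle$ by $\langle W,p_j\rangle$, whereas the paper first writes $Q_j=\langle W-p_j,W-p_j\rangle+\tau^2$ and then subtracts. The Gram inversion $c=G^{-1}b$ and the computation $\langle W,W\rangle=c^TGc=b^TG^{-1}b$ are identical to the paper's.
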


\begin{proof}
Since $p_j\in V$ and $n\perp V$,
\[
\langle F-p_j,F-p_j\rangle=\langle W-p_j,W-p_j\rangle+\tau^2.
\]
For $j=0$, this gives
\[
Q_0=\langle W,W\rangle+\tau^2.
\]
For $j=1,2,3$, expanding gives
\[
Q_j=\langle W,W\rangle-2\langle W,p_j\rangle+\|p_j\|^2+\tau^2.
\]
Subtracting the formula for $Q_0$ gives
\[
\langle W,p_j\rangle=\frac{Q_0+\|p_j\|^2-Q_j}{2}=b_j(Q).
\]
Now write $W=\sum_{i=1}^3u_ip_i$. Then
\[
b_j(Q)=\sum_{i=1}^3u_iG_{ij}.
\]
In matrix form, $b=Gu$, since $G$ is symmetric. Thus $u=G^{-1}b$, and hence
\[
W=\sum_{i,k=1}^3(G^{-1})_{ik}b_k(Q)p_i.
\]
Finally,
\[
\langle W,W\rangle=u^TGu=b^TG^{-1}b.
\]
\end{proof}

\begin{definition}[Four-point discriminant]
The four-point discriminant associated with the ordered quadruple $A=(a_0,a_1,a_2,a_3)$ is
\[
\DeltaA(Q_0,Q_1,Q_2,Q_3):=Q_0-b(Q)^TG^{-1}b(Q).
\]
\end{definition}

\begin{proposition}[Square-discriminant identity]\label{prop:square-identity}
Let $F=W+\tau n$ be a real-symmetric entire stem function and let
\[
Q_j=\langle F-p_j,F-p_j\rangle,\qquad j=0,1,2,3.
\]
Then
\begin{equation}\label{eq:discriminant-identity}
\DeltaA(Q_0,Q_1,Q_2,Q_3)=\tau^2.
\end{equation}
\end{proposition}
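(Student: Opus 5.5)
The identity follows almost directly from Lemma~\ref{lem:gram}; the plan is to combine the $j=0$ expansion with the Gram reconstruction of $\langle W,W\rangle$.

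First, I will use the orthogonal decomposition $\HHc=V_{\CC}\oplus\CC n$. Since $W$ takes values in $V_{\CC}$ and $n\perp V$, complex bilinearity gives $\langle W,n\rangle=0$. Together with $\langle n,n\rangle=\|n\|^2=1$, this yields
\[
Q_0=\langle F,F\rangle=\langle W,W\rangle+2\tau\langle W,n\rangle+\tau^2\langle n,n\rangle=\langle W,W\rangle+\tau^2 .
\]
This expansion already appears in the proof of Lemma~\ref{lem:gram}.

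Second, I will invoke Lemma~\ref{lem:gram}. It expresses the tangential component through the $Q$-data as $\langle W,W\rangle=b(Q)^TG^{-1}b(Q)$, which relies on invertibility of $G$, i.e.\ on affine independence. Substituting into the first step gives
\[
\tau^2=Q_0-b(Q)^TG^{-1}b(Q)=\DeltaA(Q_0,Q_1,Q_2,Q_3),
\]
which is \eqref{eq:discriminant-identity}. The identity holds pointwise in $z$, hence as entire functions.

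There is no real obstacle here. The only point needing care is that everything is computed with the complex bilinear form rather than a Hermitian one. This is what makes $\langle W,W\rangle$ holomorphic and allows the real Gram identity $u^TGu=b^TG^{-1}b$ to extend verbatim to complex coefficient vectors $u=G^{-1}b$. Real-symmetry of $F$ is not needed for the identity itself; it only ensures that $\tau$ and the $Q_j$ are real-symmetric.
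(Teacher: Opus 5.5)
Your proposal is correct and follows the paper's proof. The paper likewise expands $Q_0=\langle W,W\rangle+\tau^2$ using $W\perp n$ and then substitutes $\langle W,W\rangle=b(Q)^TG^{-1}b(Q)$ from Lemma~\ref{lem:gram}.
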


\begin{proof}
Since $F=W+\tau n$ and $W\perp n$,
\[
Q_0=\langle F,F\rangle=\langle W,W\rangle+\tau^2.
\]
By Lemma \ref{lem:gram}, $\langle W,W\rangle=b(Q)^TG^{-1}b(Q)$. Hence
\[
\begin{aligned}
\tau^2
&=Q_0-b(Q)^TG^{-1}b(Q)\\
&=\DeltaA(Q_0,Q_1,Q_2,Q_3).
\end{aligned}
\]
\end{proof}

Equation~\eqref{eq:discriminant-identity} will be called the square-discriminant identity.

Here and below $\OO(\CC)^*$ denotes the group of units of $\OO(\CC)$, equivalently the zero-free entire functions.

\begin{proposition}[Reconstruction from square-discriminant data]\label{prop:reconstruction}
Let
\[
Q_0,Q_1,Q_2,Q_3\in\OO(\CC)^*
\]
be real-symmetric zero-free entire functions. Suppose that there exists a real-symmetric entire function $\tau\in\OO(\CC)$ such that
\[
\DeltaA(Q_0,Q_1,Q_2,Q_3)=\tau^2.
\]
Define
\[
W(Q):=\sum_{i,k=1}^3(G^{-1})_{ik}b_k(Q)p_i
\]
and
\[
F:=W(Q)+\tau n.
\]
Then $F$ is a real-symmetric entire stem function and satisfies
\[
\langle F-p_j,F-p_j\rangle=Q_j,\qquad j=0,1,2,3.
\]
Consequently, the slice regular function induced by $F$ omits $p_0,p_1,p_2,p_3$.
\end{proposition}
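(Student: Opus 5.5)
The plan is to verify the three assertions in order: entireness and real-symmetry of $F$, the identities $\langle F-p_j,F-p_j\rangle=Q_j$, and omission via Lemma~\ref{lem:zero-divisor}.

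\emph{Entireness and real-symmetry.} Each $b_k(Q)=\tfrac12(Q_0+\|p_k\|^2-Q_k)$ is entire. It is real-symmetric because the $Q_j$ are and $\|p_k\|^2$ is real. The coefficients $(G^{-1})_{ik}$ are real and $p_i,n\in\HH$ are fixed by $\sigma$. Since $\sigma$ is anti-linear, we get
\[
\sigma(F(z))=\sum_{i,k}(G^{-1})_{ik}\overline{b_k(Q)(z)}\,p_i+\overline{\tau(z)}\,n .
\]
Real-symmetry of $b_k(Q)$ and $\tau$ turns this into $F(\overline z)$. Hence $F$ is a real-symmetric entire stem function, with $W(Q)\in V_{\CC}$ and $n\perp V$.

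\emph{The identities $\langle F-p_j,F-p_j\rangle=Q_j$.} Put $u=G^{-1}b(Q)$, so that $W(Q)=\sum_i u_ip_i$. Then $\langle W(Q),p_j\rangle=(Gu)_j=b_j(Q)$, using that $G$ is symmetric, and $\langle W(Q),W(Q)\rangle=u^TGu=b(Q)^TG^{-1}b(Q)$. This reverses the computation in Lemma~\ref{lem:gram}. Orthogonality of $n$ to $V_{\CC}$ (bilinear extension) and $\langle n,n\rangle=1$ give
\[
\langle F,F\rangle=b(Q)^TG^{-1}b(Q)+\tau^2=b(Q)^TG^{-1}b(Q)+\DeltaA(Q)=Q_0 .
\]
For $j\ge1$ we expand:
\[
\langle F-p_j,F-p_j\rangle=\langle F,F\rangle-2\langle W(Q),p_j\rangle+\|p_j\|^2=Q_0-(Q_0+\|p_j\|^2-Q_j)+\|p_j\|^2=Q_j .
\]

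\emph{Omission.} Each $Q_j$ is zero-free by hypothesis, so Lemma~\ref{lem:zero-divisor} applied with $c=p_j$ shows that the induced slice regular function omits every $p_j$.

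There is no real obstacle here; the argument is pure algebra. The only care needed is to use the bilinear (not Hermitian) form consistently, and to note that real-symmetry of $\tau$ is exactly what is required for $F$ to be a stem function and not merely holomorphic.
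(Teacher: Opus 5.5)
Your proposal is correct and follows the paper's proof essentially step by step. Real-symmetry of the $b_k(Q)$ and $\tau$ makes $F$ a real-symmetric stem function, and $\langle F,F\rangle=b(Q)^TG^{-1}b(Q)+\tau^2=Q_0$ comes from the discriminant hypothesis. The expansion for $j\ge1$ uses $\langle W(Q),p_j\rangle=b_j(Q)$, and omission follows from Lemma~\ref{lem:zero-divisor}. You make explicit the $u=G^{-1}b$ verification of the Gram identities, which the paper leaves implicit.
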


\begin{proof}
The functions $b_j(Q)$ are real-symmetric because the $Q_j$ are real-symmetric and all coefficients entering the definition are real. Hence $W(Q)$ is a real-symmetric entire map into $V_{\CC}$. Since $\tau$ is real-symmetric and $n\in\HH$, $F=W(Q)+\tau n$ is a real-symmetric entire stem function.

First,
\[
\langle F,F\rangle=\langle W(Q),W(Q)\rangle+\tau^2=b(Q)^TG^{-1}b(Q)+\tau^2=Q_0
\]
by the square-discriminant assumption. Thus $\langle F-p_0,F-p_0\rangle=Q_0$.

For $j=1,2,3$,
\[
\langle F-p_j,F-p_j\rangle
=\langle F,F\rangle-2\langle W(Q),p_j\rangle+\|p_j\|^2.
\]
By construction $\langle W(Q),p_j\rangle=b_j(Q)$, so
\[
\langle F-p_j,F-p_j\rangle
=Q_0-2b_j(Q)+\|p_j\|^2=Q_j.
\]
Thus all four identities hold. Since each $Q_j$ is zero-free, Lemma \ref{lem:zero-divisor} implies that the induced slice regular function omits $p_j$ for all $j$.
\end{proof}

\begin{theorem}[Four-point discriminant reduction]\label{thm:discriminant-reduction}
Let $a_0,a_1,a_2,a_3\in\HH$ be affinely independent. After translating by $-a_0$, set $p_0=0$ and $p_j=a_j-a_0$ for $j=1,2,3$. Define $V,n,G,b(Q)$, and $\DeltaA$ as above.

There exists a nonconstant entire slice regular function $f:\HH\to\HH$ omitting $a_0,a_1,a_2,a_3$ if and only if there exist real-symmetric zero-free entire functions
\[
Q_0,Q_1,Q_2,Q_3\in\OO(\CC)^*
\]
and a real-symmetric entire function $\tau\in\OO(\CC)$ such that
\[
\DeltaA(Q_0,Q_1,Q_2,Q_3)=\tau^2,
\]
and the stem function
\[
F=W(Q)+\tau n
\]
is nonconstant.
\end{theorem}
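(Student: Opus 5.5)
We prove the two implications separately; each is a direct assembly of Lemma~\ref{lem:zero-divisor}, Proposition~\ref{prop:square-identity} and Proposition~\ref{prop:reconstruction}. First we reduce to the normalized configuration. If $f$ omits $a_0,\dots,a_3$, then $f-a_0$ is again entire slice regular, since $a_0$ is a constant, and it omits $p_0,\dots,p_3$. Its stem function is $F-a_0$, which is still real-symmetric because $a_0\in\HH$ is fixed by $\sigma$. The translation is reversible, and it preserves constancy. So we may work with $p_0,\dots,p_3$ throughout.

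\emph{Forward direction.} Let $f$ be nonconstant and omit $p_0,\dots,p_3$, and let $F$ be its unique real-symmetric entire stem function. Write $F=W+\tau n$ using the decomposition $\HHc=V_{\CC}\oplus\CC n$, so that $\tau=\langle F,n\rangle$.

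\begin{enumerate}[label=\textup{(\alph*)}]
\item Since $n\in\HH$, the coefficient $\tau$ is entire and real-symmetric.
\item The functions $Q_j=\langle F-p_j,F-p_j\rangle$ are entire and real-symmetric, and Lemma~\ref{lem:zero-divisor} shows they are zero-free.
\item Proposition~\ref{prop:square-identity} gives $\DeltaA(Q)=\tau^2$.
\item Lemma~\ref{lem:gram} identifies the tangential part: $W=W(Q)$. Hence $F=W(Q)+\tau n$.
\item $F$ is nonconstant, because $f$ is nonconstant and $f$ is determined by $F$ through $f(x+yJ)=F_1+JF_2$.
\end{enumerate}

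\emph{Converse.} Given real-symmetric zero-free $Q_j$ and a real-symmetric entire $\tau$ with $\DeltaA(Q)=\tau^2$, Proposition~\ref{prop:reconstruction} produces a real-symmetric entire stem function $F=W(Q)+\tau n$. The induced slice regular function $f$ omits $p_0,\dots,p_3$. Translating back by $a_0$ gives a function omitting $a_0,\dots,a_3$.

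It remains to show that $f$ is nonconstant whenever $F$ is nonconstant. This is the only point requiring care, since nonconstancy of $F$ must transfer to $f$. Suppose $f\equiv c$. Restricting to the slice $\CC_J$ shows that $F_1(z)+JF_2(z)=c$ for every $J\in\Scal$. Differencing over two units $J$ and $-J$ gives $2JF_2=0$, so $F_2\equiv0$ and $F_1\equiv c$. Then $F$ is constant, contradicting the hypothesis. Equivalently, this follows from the uniqueness of the stem function stated in Section~\ref{sec:stem}.

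I expect no real obstacle. The only subtle bookkeeping is confirming that the two notions of nonconstancy, for $F$ and for $f$, coincide, and that the translation by $a_0$ preserves both real-symmetry and the omission property.
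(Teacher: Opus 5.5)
Your proof is correct and follows the paper's argument essentially verbatim. You translate by $a_0$ and use Lemma~\ref{lem:zero-divisor} with Proposition~\ref{prop:square-identity} for the forward direction, then Proposition~\ref{prop:reconstruction} and translation back for the converse. You are also slightly more explicit than the paper in two places: you invoke Lemma~\ref{lem:gram} to confirm $W=W(Q)$, and you use the $J,-J$ difference to show that a constant $f$ forces $F_2\equiv0$ and hence a constant $F$; the paper asserts both without argument.
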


\begin{proof}
Assume first that such a nonconstant slice regular function exists. Translate by $-a_0$ and let $F$ be the real-symmetric entire stem function inducing $f-a_0$. Decompose $F=W+\tau n$ and define $Q_j=\langle F-p_j,F-p_j\rangle$. By Lemma \ref{lem:zero-divisor}, each $Q_j$ is zero-free. Since $F$ is real-symmetric and $p_j\in\HH$, each $Q_j$ is real-symmetric. By Proposition \ref{prop:square-identity}, $\DeltaA(Q)=\tau^2$. Since $f$ is nonconstant, $F$ is nonconstant.

Conversely, suppose such $Q_j$ and $\tau$ are given. Define $F=W(Q)+\tau n$. By Proposition \ref{prop:reconstruction}, $F$ is a real-symmetric entire stem function and $\langle F-p_j,F-p_j\rangle=Q_j$ for all $j$. Since the $Q_j$ are zero-free, Lemma \ref{lem:zero-divisor} implies that the induced slice regular function omits $p_0,p_1,p_2,p_3$. Adding $a_0$ gives a function omitting $a_0,a_1,a_2,a_3$. It is nonconstant precisely when $F$ is nonconstant.
\end{proof}

\begin{remark}[Real structure of the square root]
The square root in Theorem~\ref{thm:discriminant-reduction} is part of the real stem data.  The assertion that $\DeltaA(Q)$ is a square in $\OO(\CC)$ alone is weaker: a square root need not be real-symmetric, and then $W(Q)+\tau n$ need not define a real-symmetric stem function.  Thus the condition is formulated with a real-symmetric root $\tau$.
\end{remark}

\subsection{Normal form}

The next coordinate form identifies the discriminant with the global normal-square coordinate in the four quadratic variables.

Define
\[
R_j(W):=\langle W-p_j,W-p_j\rangle,\qquad W\in V_{\CC},\quad j=0,1,2,3.
\]
Consider the polynomial map
\[
\Theta:V_{\CC}\times\CC\to\CC^4
\]
given by
\[
\Theta(W,S)=\bigl(R_0(W)+S,R_1(W)+S,R_2(W)+S,R_3(W)+S\bigr).
\]

\begin{proposition}[Normal form for the discriminant]\label{prop:normal-form}
The map $\Theta$ is a polynomial isomorphism. Its inverse is
\[
Q\longmapsto (W(Q),\DeltaA(Q)).
\]
In particular,
\[
\DeltaA(\Theta(W,S))=S.
\]
\end{proposition}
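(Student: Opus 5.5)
The plan is to construct an explicit polynomial map $\Psi:\CC^4\to V_{\CC}\times\CC$, $\Psi(Q)=(W(Q),\DeltaA(Q))$, and check that $\Psi\circ\Theta$ and $\Theta\circ\Psi$ are both the identity. Both $\Theta$ and $\Psi$ are polynomial. Indeed, $W(Q)$ is affine-linear in $Q$, since each $b_k(Q)$ is affine-linear and $G^{-1}$ is a constant real matrix. Likewise $\DeltaA(Q)=Q_0-b(Q)^TG^{-1}b(Q)$ is a quadratic polynomial. Thus exhibiting a two-sided polynomial inverse proves that $\Theta$ is a polynomial isomorphism.

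First, compute $\Psi\circ\Theta$. Fix $(W,S)\in V_{\CC}\times\CC$ and put $Q=\Theta(W,S)$. Because $p_0=0$, we have $Q_0=\langle W,W\rangle+S$. For $j\ge1$, expanding gives $Q_j=\langle W,W\rangle-2\langle W,p_j\rangle+\|p_j\|^2+S$. Hence
\[
b_j(Q)=\frac{Q_0+\|p_j\|^2-Q_j}{2}=\langle W,p_j\rangle .
\]
This is exactly the algebraic computation in Lemma~\ref{lem:gram}, now carried out pointwise with $S$ in place of $\tau^2$; no square root of $S$ is ever needed. Since $p_1,p_2,p_3$ is a basis of $V_{\CC}$, write $W=\sum_i u_ip_i$. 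Then $b=Gu$, so $W(Q)=\sum_{i,k}(G^{-1})_{ik}b_kp_i=W$ and $b^TG^{-1}b=u^TGu=\langle W,W\rangle$. It follows that $\DeltaA(Q)=\langle W,W\rangle+S-\langle W,W\rangle=S$. This gives $\Psi\circ\Theta=\mathrm{id}$, and in particular the identity $\DeltaA(\Theta(W,S))=S$.

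Next, compute $\Theta\circ\Psi$. Take any $Q\in\CC^4$ and set $W=W(Q)$ and $S=\DeltaA(Q)$. This repeats the proof of Proposition~\ref{prop:reconstruction} with $\tau^2$ replaced by the arbitrary scalar $S$. By construction $\langle W,p_j\rangle=b_j(Q)$, because $G^{-1}$ inverts the Gram pairing. We also have $\langle W,W\rangle=b^TG^{-1}b$. Therefore $R_0(W)+S=b^TG^{-1}b+Q_0-b^TG^{-1}b=Q_0$. For $j\ge1$,
\[
R_j(W)+S=Q_0-2b_j(Q)+\|p_j\|^2=Q_j .
\]
Hence $\Theta\circ\Psi=\mathrm{id}$. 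As an alternative to this second computation, one can note that $\Psi\circ\Theta=\mathrm{id}$ makes $\Theta$ injective between affine spaces of equal dimension $4$. Ax--Grothendieck then gives bijectivity, and the left inverse is automatically the inverse. The direct check is more elementary, however, and I would use it.

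The only delicate point is bookkeeping rather than analysis. The form $\langle\cdot,\cdot\rangle$ is complex bilinear, not Hermitian, so every identity must be used in its bilinear form. Invertibility of $G$ over $\RR$ is what guarantees that $p_1,p_2,p_3$ remain a basis of $V_{\CC}$. I expect no real obstacle, because both composites reduce to the identities already established in Lemma~\ref{lem:gram} and Proposition~\ref{prop:reconstruction}.
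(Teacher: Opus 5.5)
Your proposal is correct and follows essentially the same route as the paper: both take $\Psi(Q)=(W(Q),\DeltaA(Q))$ as the candidate inverse, check $\Psi\circ\Theta=\mathrm{id}$ via $b_j(\Theta(W,S))=\langle W,p_j\rangle$ and Gram reconstruction, and check $\Theta\circ\Psi=\mathrm{id}$ by setting $W=W(Q)$, $S=\DeltaA(Q)$ and recovering each $Q_j=R_j(W)+S$. Your verification of both composites is accurate, and the Ax--Grothendieck aside is sound but, as you note, unnecessary.
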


\begin{proof}
Let $Q=\Theta(W,S)$. Then
\[
Q_0=\langle W,W\rangle+S.
\]
For $j=1,2,3$,
\[
R_j(W)=R_0(W)-2\langle W,p_j\rangle+\|p_j\|^2,
\]
so
\[
Q_j=Q_0-2\langle W,p_j\rangle+\|p_j\|^2.
\]
Therefore
\[
b_j(Q)=\frac{Q_0+\|p_j\|^2-Q_j}{2}=\langle W,p_j\rangle.
\]
The Gram reconstruction recovers the original $W$, and
\[
\DeltaA(Q)=Q_0-\langle W,W\rangle=S.
\]
Conversely, for arbitrary $Q$, setting $W=W(Q)$ and $S=\DeltaA(Q)$ gives $Q_j=R_j(W)+S$. Hence $\Theta$ is a polynomial isomorphism with the claimed inverse.
\end{proof}

\begin{remark}[Orthogonal model]
If $p_1,p_2,p_3$ are an orthonormal basis of $V$, then $G=I_3$ and
\[
\DeltaA(Q_0,Q_1,Q_2,Q_3)=Q_0-\frac14\sum_{j=1}^3(Q_0+1-Q_j)^2.
\]
\end{remark}

\begin{example}[The standard tetrahedral frame]
Take
\[
A=(0,1,\mathbf i,\mathbf j).
\]
Then $V=\operatorname{span}_{\RR}\{1,\mathbf i,\mathbf j\}$ and one may take $n=\mathbf k$.  Since $1,\mathbf i,\mathbf j$ are orthonormal in $\HH\simeq\RR^4$, the preceding formula gives
\[
\DeltaA(Q_0,Q_1,Q_2,Q_3)
=Q_0-\frac14\bigl((Q_0+1-Q_1)^2+(Q_0+1-Q_2)^2+(Q_0+1-Q_3)^2\bigr).
\]
Writing
\[
F=w_0+w_1\mathbf i+w_2\mathbf j+\tau\mathbf k,
\]
with $w_0,w_1,w_2,\tau\in\OO(\CC)$, the corresponding quadratic functions are
\[
\begin{aligned}
Q_0&=w_0^2+w_1^2+w_2^2+\tau^2,\\
Q_1&=(w_0-1)^2+w_1^2+w_2^2+\tau^2,\\
Q_2&=w_0^2+(w_1-1)^2+w_2^2+\tau^2,\\
Q_3&=w_0^2+w_1^2+(w_2-1)^2+\tau^2.
\end{aligned}
\]
The discriminant recovers precisely the normal square $\tau^2$.
\end{example}

\section{Polynomial and finite-order cases}
\label{sec:finite-order}

Throughout this section the four points are affinely independent and normalized as in Section~\ref{sec:discriminant}. Let $F=W+\tau n$ be the stem function and
\[
Q_j=\langle F-p_j,F-p_j\rangle.
\]
The polynomial case isolates the role of zero-free quadratic data; the finite-order result is the form needed in the toroidal argument.

\begin{lemma}[Constant $Q_j$-data]\label{lem:constant-data}
If $Q_0,Q_1,Q_2,Q_3$ are all constant entire functions, then $F=W+\tau n$ is constant.
\end{lemma}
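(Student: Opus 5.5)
The plan is to use the Gram reconstruction of Lemma~\ref{lem:gram} to show that the tangential component is constant, and then the square-discriminant identity of Proposition~\ref{prop:square-identity} to show that the normal coordinate is constant.

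\emph{Tangential component.} If every $Q_j$ is constant, then each
\[
b_j(Q)=\tfrac12\bigl(Q_0+\|p_j\|^2-Q_j\bigr),\qquad j=1,2,3,
\]
is a constant complex number. Lemma~\ref{lem:gram} gives
\[
W=\sum_{i,k=1}^3(G^{-1})_{ik}b_k(Q)p_i,
\]
which holds pointwise on $\CC$. Since $G$ is a fixed invertible real matrix, $W$ is a constant element of $V_{\CC}$. No growth or analytic input is needed for this step.

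\emph{Normal component.} Proposition~\ref{prop:square-identity} gives
\[
\tau^2=\DeltaA(Q_0,Q_1,Q_2,Q_3),
\]
and the right-hand side is a polynomial in constants, hence a constant $c\in\CC$. So $\tau$ is an entire function with $\tau^2\equiv c$.

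\begin{itemize}
\item If $c=0$, then $\tau\equiv0$.
\item If $c\neq0$, then $\tau$ takes values in the two-point set $\{\pm\sqrt c\}$. The entire function $\tau$ is continuous and $\CC$ is connected, so $\tau$ is constant.
\end{itemize}
Alternatively, differentiate: $2\tau\tau'\equiv0$. Since $\tau\not\equiv0$, the identity theorem in the integral domain $\OO(\CC)$ gives $\tau'\equiv0$. In either case $\tau$ is constant, and therefore $F=W+\tau n$ is constant.

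The argument is essentially algebraic. The only point that needs care is that the square root $\tau$ of a constant must itself be constant, and this is handled by connectedness (or by the fact that $\OO(\CC)$ has no zero divisors). Real-symmetry is not needed here. I therefore expect no real obstacle; the lemma is mainly a bookkeeping step showing that constancy of the $Q$-data propagates to $F$ through the polynomial isomorphism $\Theta$ of Proposition~\ref{prop:normal-form}.
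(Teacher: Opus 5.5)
Your proposal is correct and is essentially the paper's proof. Gram reconstruction makes $W$ constant, the relation $Q_0=\langle W,W\rangle+\tau^2$ (your square-discriminant identity) makes $\tau^2$ constant, and the two-point/connectedness argument forces $\tau$ to be constant.
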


\begin{proof}
If the $Q_j$ are constant, then all $b_j(Q)$ are constant. By the Gram reconstruction formula, $W$ is constant. Since
\[
Q_0=\langle W,W\rangle+\tau^2,
\]
it follows that $\tau^2$ is constant. Hence $\tau$ is constant: if the constant is nonzero, $\tau$ takes values in the two-point set of its square roots, and if it is zero then $\tau\equiv0$. Therefore $F$ is constant.
\end{proof}

\begin{theorem}[Polynomial case]\label{thm:polynomial-case}
Let $f(q)=\sum_{m=0}^Nq^m\alpha_m$ be a slice regular polynomial. If $f$ omits four affinely independent values, then $f$ is constant.
\end{theorem}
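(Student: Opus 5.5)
Write $f=\sum q^m\alpha_m$ with stem function $F(z)=\sum_{m=0}^N z^m\alpha_m$. This $F$ is a real-symmetric polynomial map $\CC\to\HHc$, because the coefficients $\alpha_m$ lie in $\HH$. After translating the target by $-a_0$, as in Section~\ref{sec:discriminant}, each $Q_j=\langle F-p_j,F-p_j\rangle$ is a polynomial in $z$. By Lemma~\ref{lem:zero-divisor} it is also zero-free, since $f$ omits $a_j$. A zero-free polynomial is constant, so $Q_0,Q_1,Q_2,Q_3$ are all constant. Lemma~\ref{lem:constant-data} then gives that $F=W+\tau n$ is constant. Hence $f$ is constant, since $f(x+yJ)=F_1(x+\mathrm iy)+JF_2(x+\mathrm iy)$ with $F_2\equiv 0$ when $F$ is a constant real-symmetric map.

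The steps, in order, are as follows.
\begin{enumerate}
\item Check that translating $f$ by a constant preserves polynomiality, and that the stem of a slice polynomial is obtained by replacing $q$ with the complex variable $z$.
\item Show that the $Q_j$ are polynomials. This holds because $\langle\cdot,\cdot\rangle$ is complex bilinear, so $Q_j$ is a polynomial of degree at most $2N$.
\item Apply the fundamental theorem of algebra to each zero-free $Q_j$, concluding that it is constant.
\item Invoke Lemma~\ref{lem:constant-data}.
\end{enumerate}

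I do not expect a real obstacle here; the argument is essentially immediate from the earlier results. The only points needing care are the identification of the stem function of a slice polynomial, and the last step: a constant real-symmetric $F=c_1+\mathrm i c_2$ forces $c_2=-c_2$, so $c_2=0$ and $f\equiv c_1$. Note that affine independence enters only through the Gram inversion inside Lemma~\ref{lem:constant-data}.
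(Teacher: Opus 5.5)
Your proposal is correct and follows the paper's own proof essentially verbatim: the $Q_j$ of the stem polynomial are zero-free complex polynomials by Lemma~\ref{lem:zero-divisor}, hence constant, and Lemma~\ref{lem:constant-data} then forces $F$, and so $f$, to be constant. Your extra checks (identifying the stem of $\sum q^m\alpha_m$ as $\sum z^m\alpha_m$, and that a constant real-symmetric $F$ has $F_2\equiv0$) are exactly the details the paper leaves implicit.
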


\begin{proof}
After translating the target, let $F$ be the stem polynomial associated with $f-a_0$. Then each
\[
Q_j(z)=\langle F(z)-p_j,F(z)-p_j\rangle
\]
is a complex polynomial. Since $f$ omits $a_j$, Lemma \ref{lem:zero-divisor} gives that $Q_j$ is zero-free. A zero-free complex polynomial is constant. Hence all $Q_j$ are constant. Lemma \ref{lem:constant-data} implies that $F$ is constant, and therefore $f$ is constant.
\end{proof}

\subsection{Finite-order zero-free functions}

For a complex entire function $h$ write
\[
\rho(h)=\limsup_{r\to\infty}\frac{\log\log M(r,h)}{\log r},
\qquad
M(r,h)=\max_{|z|=r}|h(z)|,
\]
with the convention $\rho(h)=0$ for constant functions.  If $F=\sum_{\ell=0}^3F_\ell e_\ell$ is the stem function of an entire slice regular function, we say that $F$, and hence the induced slice regular function, has finite order when
\[
\max_{0\le \ell\le3}\rho(F_\ell)<\infty .
\]
By the representation formula, the same finiteness condition is equivalent to the corresponding maximum-modulus convention on quaternionic balls, up to the usual comparison of suprema; see \cite{GentiliStoppatoStruppa2013Regular}.  The only property used below is the direct implication that finite order of the stem components gives finite order of the quadratic functions $Q_j$.

\begin{lemma}[Finite order passes to the quadratic data]\label{lem:finite-order-passes}
Let $F:\CC\to\HHc$ be a finite-order entire stem function.  Then for every $p\in\HH$,
\[
Q_p(z)=\langle F(z)-p,F(z)-p\rangle
\]
is a finite-order entire function.
\end{lemma}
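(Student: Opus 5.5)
The plan is to expand $Q_p$ in the components of $F$ and then invoke the standard facts that finite order is stable under sums and products of entire functions.

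Write $F=\sum_{\ell=0}^3F_\ell e_\ell$ with $F_\ell$ entire and $\rho(F_\ell)\le\rho:=\max_\ell\rho(F_\ell)<\infty$. Write $p=\sum_\ell p_\ell e_\ell$ with $p_\ell\in\RR$. Bilinearity of $\langle\cdot,\cdot\rangle$ gives
\[
Q_p(z)=\sum_{\ell=0}^3\bigl(F_\ell(z)-p_\ell\bigr)^2 .
\]
So $Q_p$ is a finite sum of products of the entire functions $F_\ell-p_\ell$.

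The key estimate is on maximum moduli. For every $r>0$,
\[
M(r,Q_p)\le\sum_{\ell=0}^3\bigl(M(r,F_\ell)+|p_\ell|\bigr)^2\le 4\bigl(M(r)+\|p\|\bigr)^2,
\qquad M(r):=\max_\ell M(r,F_\ell).
\]
Fix $\varepsilon>0$. By the definition of order, $M(r,F_\ell)\le\exp(r^{\rho+\varepsilon})$ for all $r\ge r_0(\varepsilon)$ and all $\ell$. Hence
\[
M(r,Q_p)\le 4\bigl(e^{r^{\rho+\varepsilon}}+\|p\|\bigr)^2\le e^{r^{\rho+2\varepsilon}}
\]
for $r$ large. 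Taking $\log\log$, dividing by $\log r$, and letting $\varepsilon\to0$ gives $\rho(Q_p)\le\rho<\infty$.

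If $Q_p$ is constant, its order is $0$ by convention, so nothing more is needed. Nonconstant $F_\ell$ need no separate treatment, because the bound only uses the upper estimate. When some $F_\ell$ is constant, $M(r,F_\ell)$ is bounded and is absorbed into the estimate for $r\ge1$.

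No step is a genuine obstacle; the only care needed is the bookkeeping of the $\varepsilon$-loss in passing from $M(r)$ to $M(r)^2$, since the square only doubles the exponent of $e^{r^{\rho+\varepsilon}}$. Real-symmetry of $F$ plays no role here.
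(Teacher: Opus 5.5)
Your proof is correct and follows essentially the same route as the paper: expand $Q_p$ as a finite sum of products of the components $F_\ell$ and constants, then bound its maximum modulus by $C\exp(2r^{\rho+\varepsilon})$ using the order estimate for each $F_\ell$. Your version is slightly more explicit, writing $Q_p=\sum_\ell(F_\ell-p_\ell)^2$ and recording the sharper bound $\rho(Q_p)\le\rho$, but the argument is the same.
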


\begin{proof}
Write $F=\sum_{\ell=0}^3F_\ell e_\ell$.  Let $\rho$ be larger than the orders of all components $F_\ell$.  For every $\varepsilon>0$ and all sufficiently large $r$,
\[
\max_{|z|\le r}|F_\ell(z)|\le \exp(r^{\rho+\varepsilon})
\]
for all $\ell$.  Since $Q_p$ is a finite sum of products of the $F_\ell$ and constants,
\[
\max_{|z|\le r}|Q_p(z)|\le C\exp(2r^{\rho+\varepsilon})
\]
for large $r$.  Hence $Q_p$ has finite order.
\end{proof}

\begin{lemma}[Zero-free finite-order entire functions]\label{lem:finite-order-zero-free}
Let $Q\in\OO(\CC)^*$ be zero-free and of finite order. Then there exists a complex polynomial $P\in\CC[z]$ such that $Q=e^P$. If $Q(x)>0$ for all $x\in\RR$, then $P$ may be chosen with real coefficients.
\end{lemma}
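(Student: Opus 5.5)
The first assertion is the zero-free case of Hadamard's factorization theorem, so I would invoke it directly rather than reprove it. Since $Q$ is entire and zero-free, there is an entire $g$ with $Q=e^g$, because $\CC$ is simply connected and $Q'/Q$ is entire. The task is then to show that $g$ is a polynomial. Let $\rho(Q)<\infty$ and fix $\rho'>\rho(Q)$. For large $r$ we have $\operatorname{Re}g(z)=\log|Q(z)|\le r^{\rho'}$ on $|z|=r$. By the Borel--Carathéodory inequality on the disks of radii $r$ and $2r$,
\[
\max_{|z|=r}|g(z)|\le 2\max_{|z|=2r}\operatorname{Re}g(z)+3|g(0)|\le C r^{\rho'} .
\]
Cauchy's estimates for the Taylor coefficients of $g$ then show that every coefficient of index $>\rho'$ vanishes. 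Hence $g=P$ is a polynomial of degree at most $\lfloor\rho(Q)\rfloor$, as in \cite[Ch.~I]{Boas1954EntireFunctions,Levin1980Zeros}.

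For the real-coefficient refinement, assume $Q(x)>0$ for all $x\in\RR$. Then $Q$ is real on $\RR$, so by the identity principle $Q(\overline z)=\overline{Q(z)}$. The polynomial $P^*(z):=\overline{P(\overline z)}$ therefore satisfies $e^{P^*}=\overline{Q(\overline z)}=Q=e^{P}$. The difference $P-P^*$ is a polynomial taking values in $2\pi\mathrm i\ZZ$, so it is a constant $2\pi\mathrm i k$.

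The remaining step is a normalization of the constant term. Comparing constant terms gives $P(0)-\overline{P(0)}=2\mathrm i\operatorname{Im}P(0)=2\pi\mathrm i k$, and all nonconstant coefficients of $P$ are real. Evaluating at $x=0$ gives $e^{P(0)}=Q(0)>0$, so $\operatorname{Im}P(0)\in 2\pi\ZZ$. Replacing $P$ by $P-\mathrm i\operatorname{Im}P(0)$ does not change $e^P$, and the new polynomial has real coefficients.

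The only genuinely analytic step is the Borel--Carathéodory bound, and it is standard, so I would cite it. The only care needed is the choice of the branch constant, which positivity at a single real point fixes.
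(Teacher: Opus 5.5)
Your proof is correct and follows the paper's argument. The first assertion goes through $Q=e^g$, the Borel--Carath\'eodory bound and Cauchy's estimates, as in the paper. For the real-coefficient part, you use the reflection $\overline{P(\overline z)}$ and compare coefficients before fixing the constant by positivity at one point, whereas the paper observes directly that $\operatorname{Im}P(x)\in2\pi\ZZ$ is constant on $\RR$ and subtracts it — only a cosmetic difference.
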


\begin{proof}
The first assertion is the zero-free case of the Hadamard factorization theorem for finite-order entire functions; see, for instance, \cite[Ch.~I, \S\S~8--9]{Boas1954EntireFunctions} and \cite[Ch.~I, \S\S~1--3]{Levin1980Zeros}.  Since $\CC$ is simply connected and $Q$ is zero-free, $Q=e^g$ for some entire function $g$.  If $Q$ has order at most $\rho$, then the Borel--Carath\'eodory inequality applied to $g-g(0)$ gives polynomial growth for $g$.  Cauchy's estimates at the origin then give $g^{(m)}(0)=0$ for all sufficiently large $m$, and hence $g$ is a polynomial.

Write $P=g$.  If $Q(x)>0$ for real $x$, then $e^{P(x)}=Q(x)>0$, so $\operatorname{Im}P(x)\in2\pi\ZZ$.  By continuity this integer is constant on $\RR$.  Subtracting the corresponding constant $2\pi\mathrm i m$ from $P$, which does not change $e^P$, we may assume $P(x)\in\RR$ for all real $x$.  A polynomial real-valued on the real axis has real coefficients.
\end{proof}

\begin{lemma}[Simple-zero obstruction]\label{lem:simple-zero}
Let $P\in\CC[z]$ be nonconstant and let $C\in\CC^*$. Then neither $e^P-C$ nor $1-Ce^P$ is an entire square.
\end{lemma}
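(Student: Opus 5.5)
The plan is to show that $e^P-C$ always has a simple zero, so it cannot be the square of an entire function.

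First reduce the second expression to the first. If $1-Ce^P=h^2$ with $h$ entire, then
\[
e^P-C^{-1}=-C^{-1}\bigl(1-Ce^P\bigr)\cdot e^P=(\mu h\,e^{P/2})^2,
\]
where $\mu^2=-C^{-1}$. Here $e^{P/2}$ is entire, so $e^P-C^{-1}$ would be an entire square. Since $C^{-1}\neq0$, it suffices to treat $e^P-C$ for an arbitrary $C\in\CC^*$.

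Now suppose $e^P-C=h^2$ with $h$ entire. Then every zero of $e^P-C$ has even multiplicity, and I will contradict this with two observations.

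\textbf{Zeros exist.} Since $P$ is a nonconstant polynomial, it is surjective onto $\CC$. Choose any logarithm $\ell$ of $C$, which is possible because $C\ne0$. Then $P(z)=\ell+2\pi\mathrm i k$ has solutions for every $k\in\ZZ$. These give infinitely many zeros of $e^P-C$, lying in the fibres of $P$ over the infinitely many points $\ell+2\pi\mathrm i k$.

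\textbf{Almost all zeros are simple.} At a zero $z_0$ of $e^P-C$, the derivative is $P'(z_0)e^{P(z_0)}=CP'(z_0)$. This is nonzero unless $P'(z_0)=0$. Because $P'$ is a nonzero polynomial (as $P$ is nonconstant), it has only finitely many zeros, so $P$ has only finitely many critical values. Pick $k$ such that $\ell+2\pi\mathrm i k$ is not a critical value of $P$. Any solution of $P(z_0)=\ell+2\pi\mathrm i k$ is then a zero of $e^P-C$ with $CP'(z_0)\ne0$, hence a simple zero.

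A simple zero contradicts $e^P-C=h^2$: if $h(z_0)=0$, then $h^2$ vanishes to order at least two at $z_0$. Therefore $e^P-C$ is not an entire square, and by the reduction neither is $1-Ce^P$.

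The only step needing any care is producing a zero that avoids the critical values of $P$. It is handled by the finiteness of the set of critical values against the infinitely many targets $\ell+2\pi\mathrm i k$.
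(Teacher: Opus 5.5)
Your argument is correct and is essentially the paper's proof: pick a translate $\ell+2\pi\mathrm i k$ of a logarithm of $C$ that avoids the finitely many critical values of $P$, use surjectivity of $P$ to get a zero $z_0$ of $e^P-C$ there, and note $(e^P-C)'(z_0)=CP'(z_0)\neq0$. One slip in your reduction: the displayed identity is off by a factor $e^P$. Indeed $-C^{-1}(1-Ce^P)e^P=e^P(e^P-C^{-1})$, and the correct identity is simply $e^P-C^{-1}=-C^{-1}(1-Ce^P)=(\mu h)^2$, which is the paper's reduction. The conclusion survives either way, because $e^P$ is the square of the unit $e^{P/2}$.
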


\begin{proof}
Choose a logarithm $\log C$. The equation $e^{P(z)}=C$ is equivalent to
\[
P(z)=\log C+2\pi\mathrm i m
\]
for some $m\in\ZZ$. The set of critical values
$\CritVal(P)=P(\{P'=0\})$ is finite, while the set
$\{\log C+2\pi\mathrm i m:m\in\ZZ\}$ is infinite. Choose $m$ such that
\[
c_m=\log C+2\pi\mathrm i m
\]
is not a critical value. Since $P$ is surjective, there is $z_0$ with $P(z_0)=c_m$, and then $P'(z_0)\neq0$. Thus
\[
(e^P-C)'(z_0)=e^{P(z_0)}P'(z_0)=CP'(z_0)\neq0.
\]
So $e^P-C$ has a simple zero and cannot be an entire square. Since $1-Ce^P=-C(e^P-1/C)$, the second assertion follows.
\end{proof}

\subsection{Distance estimates}

For real $x$, real-symmetry gives $F(x)\in\HH$. Therefore
\[
Q_j(x)=\|F(x)-p_j\|^2.
\]
If $Q_j$ is zero-free, then $Q_j(x)>0$.

\begin{lemma}[Distance comparison]\label{lem:distance-comparison}
For distinct indices $i\neq k$, set
\[
d_{ik}:=\|p_i-p_k\|^2>0.
\]
Then for every real $x$,
\[
\left|Q_i(x)-Q_k(x)-d_{ik}\right|
\le
2\sqrt{d_{ik}}\sqrt{Q_k(x)}.
\]
\end{lemma}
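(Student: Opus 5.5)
For real $x$, real-symmetry gives $F(x)\in\HH$, so $Q_j(x)=\|F(x)-p_j\|^2$ is a genuine squared Euclidean distance. The estimate therefore reduces to expanding a squared norm in $\HH\simeq\RR^4$ and applying Cauchy--Schwarz. Fix real $x$ and set $v:=F(x)-p_k\in\HH$ and $w:=p_k-p_i\in\HH$. Then
\[
F(x)-p_i=v+w,\qquad \|v\|^2=Q_k(x),\qquad \|w\|^2=d_{ik}.
\]

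Expanding the real inner product gives
\[
Q_i(x)=\|v+w\|^2=\|v\|^2+2\langle v,w\rangle_{\RR}+\|w\|^2=Q_k(x)+2\langle v,w\rangle_{\RR}+d_{ik}.
\]
Hence
\[
Q_i(x)-Q_k(x)-d_{ik}=2\langle v,w\rangle_{\RR}.
\]
By Cauchy--Schwarz in $\RR^4$,
\[
|2\langle v,w\rangle_{\RR}|\le 2\|v\|\,\|w\|=2\sqrt{Q_k(x)}\sqrt{d_{ik}},
\]
which is the claimed inequality.

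There is no substantive obstacle. The only point needing care is that the bilinear form $\langle\cdot,\cdot\rangle$ on $\HHc$ restricts to the Euclidean inner product on $\HH$. This is what makes $Q_j(x)$ nonnegative and lets Cauchy--Schwarz apply. Here $d_{ik}>0$ holds because the $p_j$ are distinct, which follows from affine independence.
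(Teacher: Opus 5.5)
Your proof is correct and matches the paper's argument: on the real axis $F(x)\in\HH$, so $Q_i(x)-Q_k(x)-d_{ik}$ equals twice a real inner product of $F(x)-p_k$ with $\pm(p_i-p_k)$, and Cauchy--Schwarz gives the bound. The only difference is the sign convention ($w=p_k-p_i$ versus the paper's $v_{ik}=p_i-p_k$), which does not matter.
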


\begin{proof}
Put $u(x)=F(x)-p_k\in\HH$ and $v_{ik}=p_i-p_k$. Then $Q_k(x)=\|u(x)\|^2$ and
\[
Q_i(x)=\|u(x)-v_{ik}\|^2=\|u(x)\|^2-2\langle u(x),v_{ik}\rangle_{\RR}+\|v_{ik}\|^2.
\]
Therefore
\[
Q_i(x)-Q_k(x)-d_{ik}=-2\langle u(x),v_{ik}\rangle_{\RR}.
\]
Cauchy--Schwarz gives the claim.
\end{proof}

\begin{lemma}[Finite-order rigidity alternatives]\label{lem:rigidity-alternatives}
Assume that the four functions $Q_j$ are zero-free, finite-order, and arise from an omission datum. Write
\[
Q_j=e^{P_j},\qquad P_j\in\RR[z].
\]
Suppose $P_k$ is nonconstant. Along one of the two real directions $x\to+\infty$ or $x\to-\infty$, one of the following alternatives occurs:
\begin{enumerate}[label=(\Alph*)]
\item $P_k(x)\to+\infty$, and then $P_i=P_k$ for every $i\neq k$.
\item $P_k(x)\to-\infty$, and then $P_i\equiv\log d_{ik}$ for every $i\neq k$.
\end{enumerate}
\end{lemma}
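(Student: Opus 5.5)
The plan is to combine the real-axis distance comparison of Lemma~\ref{lem:distance-comparison} with polynomial asymptotics. Since $P_k\in\RR[z]$ is nonconstant, $P_k(x)\to\pm\infty$ as $x\to+\infty$ and as $x\to-\infty$, and the two limits are separate. We fix one direction, say $x\to+\infty$ (the other is symmetric), and split according to the sign of the limit of $P_k$. Throughout we use that each $P_i-P_k$ is a real polynomial, so along a real ray it either tends to $\pm\infty$ or has a finite limit. The same holds for $P_i$ alone, and a finite limit forces the polynomial to be constant.

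\emph{Case (A): $P_k(x)\to+\infty$.} Divide the inequality of Lemma~\ref{lem:distance-comparison} by $Q_k(x)=e^{P_k(x)}$. This gives
\[
\bigl|e^{P_i(x)-P_k(x)}-1-d_{ik}e^{-P_k(x)}\bigr|\le 2\sqrt{d_{ik}}\,e^{-P_k(x)/2}.
\]
The right-hand side and the term $d_{ik}e^{-P_k}$ both tend to $0$, so $e^{P_i-P_k}\to1$. Hence the polynomial $P_i-P_k$ tends to $0$ along the ray, and so it is constant equal to $0$. This gives $P_i=P_k$.

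\emph{Case (B): $P_k(x)\to-\infty$.} Here $Q_k(x)\to0$, and the lemma directly gives $Q_i(x)-Q_k(x)-d_{ik}\to0$, so $e^{P_i(x)}\to d_{ik}$. Then the real polynomial $P_i$ tends to the finite value $\log d_{ik}$ along the ray, and therefore $P_i\equiv\log d_{ik}$.

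The only subtlety is the choice of direction in the statement. The lemma only claims that, along one of the two directions, the stated alternative holds with the conclusion matching the sign of the limit of $P_k$. Any single direction works, since in each direction the limit is $+\infty$ or $-\infty$ and we land in (A) or (B) respectively. So I would simply take $x\to+\infty$.

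The step needing care is the passage from "limit of a polynomial along a real ray is finite" to "the polynomial is constant". This is immediate from the leading term: a nonconstant real polynomial is unbounded on every real ray. I expect no real obstacle, and the hypothesis that the data come from an omission datum is used only to obtain $Q_j(x)=\|F(x)-p_j\|^2$ for Lemma~\ref{lem:distance-comparison}.
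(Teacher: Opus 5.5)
Your proof is correct and follows the paper's argument essentially verbatim. When $P_k\to+\infty$ you use Lemma~\ref{lem:distance-comparison} to get $e^{P_i-P_k}\to1$ and hence $P_i=P_k$, and when $P_k\to-\infty$ you get $e^{P_i}\to d_{ik}$ and hence $P_i\equiv\log d_{ik}$, exactly as the paper does; your explicit division by $Q_k(x)$ and your remark that either real direction can be used are only minor refinements of the paper's $O(\sqrt{Q_k})$ estimate and its ``at least one direction'' phrasing.
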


\begin{proof}
Since $P_k$ is a nonconstant real polynomial, along at least one real direction it tends either to $+\infty$ or to $-\infty$.

If $P_k(x)\to+\infty$, then $Q_k(x)\to+\infty$. Lemma \ref{lem:distance-comparison} gives
\[
Q_i(x)=Q_k(x)+d_{ik}+O(\sqrt{Q_k(x)}).
\]
Thus $Q_i(x)/Q_k(x)\to1$, so
\[
e^{P_i(x)-P_k(x)}\to1.
\]
Since $P_i-P_k$ is a real polynomial, this implies $P_i-P_k\to0$ along an infinite real direction, hence $P_i=P_k$.

If $P_k(x)\to-\infty$, then $Q_k(x)\to0$. Lemma \ref{lem:distance-comparison} gives $Q_i(x)\to d_{ik}>0$. Hence $P_i(x)=\log Q_i(x)\to\log d_{ik}$. A real polynomial tending to a finite limit along an infinite real direction is constant. Therefore $P_i\equiv\log d_{ik}$.
\end{proof}

\begin{lemma}[Basepoint invariance of the normal square]\label{lem:basepoint-invariance}
Let $p_0,p_1,p_2,p_3$ be affinely independent and let $V_{\mathrm{aff}}$ be the direction space of their real affine span.  Choose a unit vector $n\in V_{\mathrm{aff}}^\perp$ and write, after the normalization $p_0=0$,
\[
F=W+\tau n,\qquad W\in (V_{\mathrm{aff}})_{\CC}.
\]
If $F-p_k=W_k+\tau_k n_k$ is the decomposition obtained after using $p_k$ as basepoint, where $n_k$ is a unit normal to $V_{\mathrm{aff}}$, then $n_k=\pm n$ and $\tau_k^2=\tau^2$.  Consequently the square in the discriminant identity is independent of the chosen basepoint.
\end{lemma}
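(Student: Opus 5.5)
The argument is elementary linear algebra on the orthogonal decomposition $\HH=V_{\mathrm{aff}}\oplus\RR n$ and its complexification. The key point is that the direction space $V_{\mathrm{aff}}$ does not depend on which $p_k$ is used as basepoint. For every $k$ we have $V_{\mathrm{aff}}=\operatorname{span}_{\RR}\{p_i-p_k: i\ne k\}$, and this equals $\operatorname{span}_{\RR}\{p_1,p_2,p_3\}=V$ because $p_i-p_k=(p_i-p_0)-(p_k-p_0)$. Hence the orthogonal complement $V_{\mathrm{aff}}^\perp\subset\HH\simeq\RR^4$ is a fixed real line. Its only unit vectors are $\pm n$, so $n_k=\pm n$.

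Next I would compare the two decompositions of $F-p_k$. Since $p_k\in V$ (with $p_0=0$), we get
\[
F-p_k=(W-p_k)+\tau n,\qquad W-p_k\in V_{\CC}.
\]
Complexifying gives $\HHc=V_{\CC}\oplus\CC n$, and this sum is direct. The decomposition of any element of $\HHc$ along it is therefore unique. Comparing with $F-p_k=W_k+\tau_k n_k=W_k+(\pm\tau_k)n$ yields $W_k=W-p_k$ and $\tau_k=\pm\tau$, with the same sign as $n_k=\pm n$. Consequently $\tau_k^2=\tau^2$.

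Finally, for the statement about the discriminant identity, I would apply Proposition~\ref{prop:square-identity} with basepoint $p_k$. Its right-hand side is $\tau_k^2$, and we have just shown that $\tau_k^2=\tau^2$. More intrinsically, $\tau^2=\langle F-p_k,n\rangle^2$ for every $k$, since $\langle p_k,n\rangle=0$ and $\langle n,n\rangle=1$ in the bilinear form. This expression is visibly independent of $k$ and of the sign of $n$.

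No step is a real obstacle. The only point needing care is that uniqueness of the decomposition holds in $\HHc$, not merely in $\HH$: the map $V_{\CC}\oplus\CC n\to\HHc$ is an isomorphism because it is the complexification of the real isomorphism $V\oplus\RR n\to\HH$. The bilinear (non-Hermitian) form causes no trouble here, since the normal coordinate is computed by pairing with the real vector $n$, and $\langle n,n\rangle=1$.
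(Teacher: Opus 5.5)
Your proposal is correct and follows the paper's proof: the direction space $\operatorname{span}_{\RR}\{p_j-p_k : j\ne k\}$ is the same $V$, so $n_k=\pm n$, and $F-p_k=(W-p_k)+\tau n$ shows that the normal coordinate changes at most by sign. Your additional remarks, namely uniqueness of the decomposition in $\HHc=V_{\CC}\oplus\CC n$ and the intrinsic formula $\tau^2=\langle F-p_k,n\rangle^2$, only make explicit what the paper leaves implicit.
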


\begin{proof}
For the normalization $p_0=0$, the direction space is $V=\operatorname{span}_{\RR}\{p_1,p_2,p_3\}$.  If $p_k$ is used as basepoint, the new direction space is
\[
\operatorname{span}_{\RR}\{p_j-p_k:j\ne k\}=V,
\]
because it is the same affine-span direction.  Hence its normal line is the same line $\CC n$, so $n_k=\pm n$.  Writing $F=W+\tau n$ with $W\in V_{\CC}$ and using $p_k\in V$, one has
\[
F-p_k=(W-p_k)+\tau n.
\]
Thus the normal coordinate changes at most by sign, and the square is unchanged.
\end{proof}

\begin{corollary}[Finite-order $Q$-data]\label{cor:finite-order-Q-data}
Let $(F,(p_0,p_1,p_2,p_3))$ be a four-point omission datum in the sense of Definition~\ref{def:omission-datum}. Suppose that each
\[
Q_j(z)=\langle F(z)-p_j,F(z)-p_j\rangle,\qquad j=0,1,2,3,
\]
is of finite order. Then $F$ is constant.
\end{corollary}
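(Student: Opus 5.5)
By Lemma~\ref{lem:finite-order-zero-free}, each $Q_j$ is zero-free, of finite order, and positive on $\RR$, so we may write $Q_j=e^{P_j}$ with $P_j\in\RR[z]$. If every $P_j$ is constant, Lemma~\ref{lem:constant-data} shows that $F$ is constant. Otherwise we fix an index $k$ with $P_k$ nonconstant. We then apply Lemma~\ref{lem:rigidity-alternatives}, choosing a real direction along which $|P_k|\to\infty$. Two cases remain, and in each we aim for the contradiction that $\tau^2$ equals $e^P-C$ or $1-Ce^P$ up to a nonzero constant factor, with $P$ nonconstant and $C\neq0$. Lemma~\ref{lem:simple-zero} rules out both forms.

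By Lemma~\ref{lem:basepoint-invariance}, we may recompute the discriminant with $p_k$ as basepoint without changing $\tau^2$. So we relabel so that $k=0$ and $p_0=0$. In alternative (A), all $P_i$ equal $P:=P_0$. Then $b_j(Q)=\|p_j\|^2/2$ is constant, say $b_j=\beta_j$, and
\[
\tau^2=e^{P}-\beta^TG^{-1}\beta .
\]
Set $C=\beta^TG^{-1}\beta$. This $C$ is the squared norm of the vector $w\in V$ with $\langle w,p_j\rangle=\|p_j\|^2/2$, which is the circumcenter of $0,p_1,p_2,p_3$. The points are distinct, so $w\neq0$ and $C>0$. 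Hence $\tau^2=e^P-C$, which Lemma~\ref{lem:simple-zero} forbids.

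In alternative (B), $Q_i=d_{i0}=\|p_i\|^2$ for $i\ge1$. Then $b_j(Q)=Q_0/2=e^{P}/2$, and
\[
\tau^2=e^P-\tfrac14 e^{2P}\,\mathbf 1^TG^{-1}\mathbf 1 .
\]
Set $\kappa=\mathbf 1^TG^{-1}\mathbf 1$. Since $G^{-1}$ is positive definite, $\kappa>0$. We then write $\tau^2=e^P\bigl(1-\tfrac{\kappa}{4}e^{P}\bigr)$. Because $e^P$ is a zero-free square, namely $(e^{P/2})^2$, the factor $1-\tfrac\kappa4e^P$ must itself be an entire square: its zeros are exactly the zeros of $\tau^2$, so they all have even order. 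This is excluded by the second part of Lemma~\ref{lem:simple-zero}.

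The main obstacle is the bookkeeping around the basepoint change, so I will check it explicitly. When we swap the roles of $p_0$ and $p_k$, the discriminant formula has to be recomputed with the translated points $p_j-p_k$ and their Gram matrix. The constants $C$ and $\kappa$ must then be shown to be nonzero in that frame; positivity of the Gram form gives this. We also need to confirm that the directional choice in Lemma~\ref{lem:rigidity-alternatives} gives polynomial identities valid on all of $\CC$, not only asymptotics. This holds because the lemma's conclusion is an identity between polynomials.
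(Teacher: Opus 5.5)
Your proposal is correct and follows the paper's proof essentially step for step. Both write $Q_j=e^{P_j}$ with real $P_j$, handle the all-constant case by Lemma~\ref{lem:constant-data}, split into the two alternatives of Lemma~\ref{lem:rigidity-alternatives}, rebase at $p_k$, and reduce each alternative to Lemma~\ref{lem:simple-zero}. The differences are cosmetic: you get $C>0$ from the circumcenter rather than from positive definiteness of $G_k^{-1}$, and in alternative (B) you get squareness of $1-\tfrac{\kappa}{4}e^{P}$ from even zero orders where the paper simply sets $S=\tau e^{-P/2}$. One small attribution slip: positivity of $Q_j$ on $\RR$ comes from real-symmetry of $F$, not from Lemma~\ref{lem:finite-order-zero-free}.
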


\begin{proof}
For real $x$, real-symmetry gives $F(x)\in\HH$, and hence
\[
Q_j(x)=\|F(x)-p_j\|^2>0.
\]
By Lemma~\ref{lem:finite-order-zero-free},
\[
Q_j=e^{P_j},\qquad P_j\in\RR[z].
\]
If all $P_j$ are constant, then all $Q_j$ are constant, and Lemma~\ref{lem:constant-data} implies that $F$ is constant.

Assume that some $P_k$ is nonconstant. By Lemma~\ref{lem:rigidity-alternatives}, either Alternative A or Alternative B holds.  The rebasing at $p_k$ used below preserves the normal square by Lemma~\ref{lem:basepoint-invariance}.

In Alternative A, all $Q_j$ are equal to $U=e^{P_k}$. Use $p_k$ as the basepoint. Write
\[
\{i_1,i_2,i_3\}=\{0,1,2,3\}\setminus\{k\},\qquad
r_\alpha=p_{i_\alpha}-p_k\quad(1\le \alpha\le3),
\]
and let $G_k$ be the Gram matrix of $r_1,r_2,r_3$. Then all four $Q$-functions are $U$, so
\[
b_\alpha=\frac{U+\|r_\alpha\|^2-U}{2}=\frac{\|r_\alpha\|^2}{2}.
\]
With $d_k=(\|r_1\|^2,\|r_2\|^2,\|r_3\|^2)^T$, the discriminant identity gives
\[
\tau^2=U-\frac14d_k^TG_k^{-1}d_k.
\]
The constant $C_k=\frac14d_k^TG_k^{-1}d_k$ is positive, since $G_k^{-1}$ is positive definite and $d_k\neq0$. Hence
\[
\tau^2=e^{P_k}-C_k,
\]
contradicting Lemma~\ref{lem:simple-zero}.

In Alternative B, the other three functions are constants:
\[
Q_i\equiv d_{ik}=\|p_i-p_k\|^2\qquad (i\neq k).
\]
Again using $p_k$ as basepoint, put $U=Q_k=e^{P_k}$. Then
\[
b=\frac U2(1,1,1)^T.
\]
Thus
\[
\tau^2=U-\frac{U^2}{4}(1,1,1)G_k^{-1}(1,1,1)^T.
\]
Let
\[
D_k=\frac14(1,1,1)G_k^{-1}(1,1,1)^T>0.
\]
Then
\[
\tau^2=U(1-D_kU)=e^{P_k}(1-D_ke^{P_k}).
\]
Since $e^{P_k}$ has the entire square root $e^{P_k/2}$, the function
\[
S:=\tau e^{-P_k/2}
\]
is entire and satisfies
\[
S^2=1-D_ke^{P_k},
\]
again contradicting Lemma~\ref{lem:simple-zero}.

Both alternatives are impossible. Hence all $P_j$ are constant, and $F$ is constant.
\end{proof}

\begin{theorem}[Finite-order case]\label{thm:finite-order}
Let $a_0,a_1,a_2,a_3\in\HH$ be affinely independent. Let $f:\HH\to\HH$ be an entire slice regular function of finite order. If $f$ omits $a_0,a_1,a_2,a_3$, then $f$ is constant.
\end{theorem}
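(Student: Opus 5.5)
The plan is to reduce Theorem~\ref{thm:finite-order} directly to Corollary~\ref{cor:finite-order-Q-data}.

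First, translate the target by $-a_0$. The function $f-a_0$ is again entire, slice regular and of finite order, since subtracting a constant does not change the orders of the stem components. It omits $p_0=0$ and $p_j=a_j-a_0$ for $j=1,2,3$, and these points are affinely independent and normalized as in Section~\ref{sec:discriminant}.

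Let $F$ be the real-symmetric entire stem function inducing $f-a_0$. Define
\[
Q_j=\langle F-p_j,F-p_j\rangle,\qquad j=0,1,2,3.
\]
By Lemma~\ref{lem:zero-divisor}, each $Q_j$ is zero-free, because $f-a_0$ omits $p_j$. Therefore $(F,(p_0,p_1,p_2,p_3))$ is a four-point omission datum in the sense of Definition~\ref{def:omission-datum}.

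Next we need each $Q_j$ to be of finite order. The finite-order hypothesis on $f$ is, by the convention fixed before Lemma~\ref{lem:finite-order-passes}, the condition $\max_\ell\rho(F_\ell)<\infty$ on the stem components of $f$. Translating by $a_0$ only alters the constant part of $F_1$, so this condition holds equally for the stem of $f-a_0$. Lemma~\ref{lem:finite-order-passes} then shows that each $Q_j=Q_{p_j}$ is of finite order.

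Corollary~\ref{cor:finite-order-Q-data} now applies and shows that $F$ is constant. Hence $f-a_0$ is constant, and so $f$ is constant.

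There is no serious obstacle, because the analytic work is already contained in Corollary~\ref{cor:finite-order-Q-data}. The only point to check is the meaning of ``finite order'' for $f$. If the statement is read with a maximum-modulus order on quaternionic balls rather than the stem-component convention, one needs the comparison between the two. This is the representation-formula comparison cited before Lemma~\ref{lem:finite-order-passes}. From $F_1(z)=\tfrac12(f(x+yJ)+f(x-yJ))$ and $F_2(z)=-\tfrac12J(f(x+yJ)-f(x-yJ))$ we get $|F_\ell(z)|\le \max_{|q|\le|z|}|f(q)|$, so finite order of $f$ gives finite order of each stem component, and the argument above goes through.
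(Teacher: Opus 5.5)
Your proposal is correct and follows the paper's proof: translate by $-a_0$, obtain a four-point omission datum from Lemma~\ref{lem:zero-divisor}, pass finite order to the $Q_j$ with Lemma~\ref{lem:finite-order-passes}, and conclude with Corollary~\ref{cor:finite-order-Q-data}. Your closing comparison of the stem-component convention with the quaternionic maximum-modulus convention is correct; the paper only cites this comparison and does not prove it.
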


\begin{proof}
Translate by $-a_0$ and let $F$ be the stem function of $f-a_0$. Define $p_0=0$ and $p_j=a_j-a_0$ for $1\le j\le3$.  By Lemma~\ref{lem:zero-divisor}, the functions
\[
Q_j=\langle F-p_j,F-p_j\rangle
\]
are zero-free, so $(F,(p_j)_{j=0}^3)$ is a four-point omission datum.  By Lemma~\ref{lem:finite-order-passes}, each $Q_j$ has finite order.  Corollary~\ref{cor:finite-order-Q-data} gives that $F$ is constant, and hence $f$ is constant.
\end{proof}

\section{Toroidal reduction}
\label{sec:toroidal}

No growth condition is imposed on the original slice regular function in the rest of the proof.  Finite-order curves appear only as auxiliary exponential curves in the torus, through Corollary~\ref{cor:finite-order-Q-data}.

Let $(F,(p_0,p_1,p_2,p_3))$ be a four-point omission datum. Define
\[
Q_j(z)=\langle F(z)-p_j,F(z)-p_j\rangle,
\]
so each $Q_j\in\OO(\CC)^*$. Thus
\[
Q=(Q_0,Q_1,Q_2,Q_3):\CC\to(\CC^*)^4
\]
is an entire curve.

Define its toric Zariski closure by
\[
M_Q:=\overline{Q(\CC)}^{\Zar}\subset(\CC^*)^4,
\]
and its toric rank by
\[
\rtor(Q):=\dim M_Q.
\]

\begin{lemma}[Positivity on the real axis]\label{lem:positive-real-axis}
For every real $x$ and every $j$,
\[
Q_j(x)>0.
\]
\end{lemma}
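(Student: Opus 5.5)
The plan is to evaluate $Q_j$ on the real axis and use real-symmetry of the stem function, exactly as in the remark preceding Lemma~\ref{lem:distance-comparison}. First, for $x\in\RR$ the identity $F(\overline z)=\sigma(F(z))$ gives $F(x)=\sigma(F(x))$. Hence every complex coordinate of $F(x)$ is real, so $F(x)\in\HH$. Equivalently, in $F=F_1+\mathrm iF_2$ the odd part satisfies $F_2(x)=-F_2(x)=0$.

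Second, since $p_j\in\HH$, the vector $F(x)-p_j$ lies in $\HH$. On $\HH$ the complex bilinear form restricts to the Euclidean inner product, so
\[
Q_j(x)=\langle F(x)-p_j,F(x)-p_j\rangle=\|F(x)-p_j\|^2\ge0 .
\]

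Third, strictness comes from zero-freeness. Because $(F,(p_0,\dots,p_3))$ is a four-point omission datum, each $Q_j$ lies in $\OO(\CC)^*$ by Definition~\ref{def:omission-datum}. Equivalently, Lemma~\ref{lem:zero-divisor} applies, since the induced slice regular function omits $p_j$. So $Q_j(x)\neq0$, and combined with nonnegativity this gives $Q_j(x)>0$.

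There is no real obstacle here. The only point needing care is to invoke real-symmetry, not just holomorphy, so that $F(x)$ is genuinely quaternionic. Without it, $\langle\cdot,\cdot\rangle$ is not positive on $\HHc$ and the sign conclusion would fail.
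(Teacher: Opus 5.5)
Your proof is correct and is essentially the paper's own argument. Real-symmetry puts $F(x)$ in $\HH$, so $Q_j(x)=\|F(x)-p_j\|^2\ge 0$, and zero-freeness of $Q_j$ (built into the omission datum) upgrades this to strict positivity.
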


\begin{proof}
Since $F$ is real-symmetric, $F(x)\in\HH$ for $x\in\RR$. Hence
\[
Q_j(x)=\|F(x)-p_j\|^2.
\]
The function $Q_j$ is zero-free, so it is strictly positive on the real axis.
\end{proof}

\begin{proposition}[Positive real torus parametrization]\label{prop:positive-torus}
Let $Q:\CC\to(\CC^*)^4$ come from a real-symmetric four-point omission datum.  Then $M_Q$ is a positive real translate of an algebraic subtorus: if $d=\dim M_Q$, there exist $\lambda_j\in\RR_{>0}$, $m_j\in\ZZ^d$, and a Zariski-dense real-symmetric entire curve $E:\CC\to(\CC^*)^d$ such that
\[
Q_j=\lambda_jE^{m_j},\qquad j=0,1,2,3,
\]
the exponent vectors $m_0,\ldots,m_3$ generate the lattice $\ZZ^d$, and
\[
E(x)\in(\RR_{>0})^d\qquad (x\in\RR).
\]
\end{proposition}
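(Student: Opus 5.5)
Since each $Q_j$ is zero-free, $Q$ is an entire curve in $(\CC^*)^4$, which is a semi-abelian variety. Noguchi's form of logarithmic Bloch--Ochiai \cite[Main Theorem~(i)]{Noguchi1998HolomorphicCurves} says the Zariski closure of an entire curve in a semi-abelian variety is a translate of a semi-abelian subvariety. For the torus $(\CC^*)^4$ these subvarieties are algebraic subtori. Hence
\[
M_Q=\mu\cdot T,\qquad \mu\in(\CC^*)^4,
\]
with $T\subset(\CC^*)^4$ a connected algebraic subtorus of dimension $d$.

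We fix an isomorphism $\phi:(\CC^*)^d\to T$. Such a map has the form
\[
\phi(u)=(u^{m_0},\dots,u^{m_3})
\]
with $m_j\in\ZZ^d$. Since $\phi$ is an isomorphism onto $T$, its character map $\ZZ^4\to\ZZ^d$ is surjective, and so the $m_j$ generate $\ZZ^d$. Dividing $Q$ by $\mu$ gives a curve in $T$. We set
\[
E:=\phi^{-1}(\mu^{-1}Q).
\]
This is an entire curve in $(\CC^*)^d$, and it is Zariski dense because $Q(\CC)$ is dense in $M_Q$.

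\textbf{Positive translate.} Next we show $\mu$ may be taken positive real. By Lemma~\ref{lem:positive-real-axis}, $Q(0)\in(\RR_{>0})^4$. Since $Q(0)\in M_Q$, we have $M_Q=Q(0)\cdot T$, so we replace $\mu$ by $\lambda:=Q(0)$. Then $E(0)=1$.

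\textbf{Real structure.} Complex conjugation $\kappa$ on $(\CC^*)^4$ maps $Q(\CC)$ to $Q(\overline{\CC})=Q(\CC)$, by real-symmetry of the $Q_j$. It therefore preserves $M_Q$. Since $\lambda$ is real, $\kappa$ also preserves $T=\lambda^{-1}M_Q$. A subtorus preserved by conjugation is defined by the character sublattice it annihilates, and that sublattice is conjugation-stable automatically, since characters have integer coefficients. So $T$ is defined over $\RR$, and $T(\RR)_{>0}=T\cap(\RR_{>0})^4$ is the image of $(\RR_{>0})^d$ under $\phi$.

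Concretely, $\phi$ is a monomial map with integer exponents, so it commutes with conjugation and maps $(\RR_{>0})^d$ into positive points. Hence $E(\overline z)=\overline{E(z)}$: both sides are $\phi^{-1}$ of $\lambda^{-1}Q(\overline z)=\overline{\lambda^{-1}Q(z)}$, and $\phi^{-1}$ commutes with $\kappa$.

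\textbf{Main obstacle: positivity of $E$ on $\RR$.} For real $x$, $E(x)$ is real and nonzero. We must exclude negative coordinates, and this is where the work lies. The approach uses the identity component of the real points. The set $\{x\in\RR\}$ is connected, $E(0)=1$, and $E(x)\in((\CC^*)^d)(\RR)=(\RR^*)^d$ for all real $x$. A connected path starting at $1$ inside $(\RR^*)^d$ stays in the component $(\RR_{>0})^d$. Hence $E(x)\in(\RR_{>0})^d$.

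Setting $\lambda_j=Q_j(0)>0$ then gives $Q_j=\lambda_jE^{m_j}$ with all the required properties. One technical point remains: $\phi^{-1}$ is a Laurent monomial map defined on all of $T$. This follows because the $m_j$ generate $\ZZ^d$, so each coordinate $u_i$ is a monomial in the $u^{m_j}$.
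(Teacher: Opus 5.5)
Your proof is correct and follows the paper's skeleton exactly. Both use logarithmic Bloch--Ochiai, translate by $\lambda=Q(x_0)$ with $x_0$ real, take a monomial isomorphism onto the subtorus whose exponent vectors generate $\ZZ^d$, and set $E=\phi^{-1}(\lambda^{-1}Q)$.

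The only divergence is in the last two properties, which you derive in the opposite order and by a different mechanism. You first obtain real-symmetry of $E$ from the conjugation-equivariance of the monomial map $\phi^{-1}$. You then get positivity topologically: $E(x)\in(\RR^*)^d$ for real $x$, $E(0)=1$, and $\RR$ is connected.

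The paper (Lemma~\ref{lem:appendix-positive-real}) gets positivity pointwise and algebraically. Since the $m_j$ generate $\ZZ^d$, each coordinate $u_a$ is an integral monomial in the characters $u^{m_j}$. Hence $E(x)^{m_j}=Q_j(x)/\lambda_j>0$ forces $E_a(x)>0$ at once, and real-symmetry then follows from the identity theorem. The fact you record at the end as a ``technical point'' already gives this one-line argument, so the step you flag as the main obstacle is not really one.

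What each route buys:
\begin{itemize}
\item The paper's argument proves the curve-independent statement $\phi((\RR_{>0})^d)=T\cap(\RR_{>0})^4$. You assert this in your real-structure paragraph without justification, though you never use it.
\item Your connectedness argument is specific to the curve and uses the normalization $E(0)=1$. In exchange, it does not need the exponent vectors to generate the full lattice for the positivity step. It would still work for a lift through an isogeny, with real-symmetry then coming from uniqueness of lifts, whereas the monomial-inverse argument fails there.
\end{itemize}

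Your paragraph showing that $T$ is conjugation-stable is harmless but superfluous, since every subtorus of $(\CC^*)^4$ is cut out by monomial equations with integer exponents.
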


\begin{proof}
By the logarithmic Bloch--Ochiai theorem for semi-abelian varieties, due to Noguchi \cite[Main Theorem~(i)]{Noguchi1998HolomorphicCurves} and used here together with the formulations in \cite{NoguchiWinkelmannYamanoi2002SMT,DethloffLu2001LogarithmicJetBundles}, $M_Q$ is a translate of an algebraic subtorus of $(\CC^*)^4$. Choose $x_0\in\RR$. By Lemma \ref{lem:positive-real-axis}, $\lambda:=Q(x_0)$ belongs to $(\RR_{>0})^4$.  Since $\lambda\in M_Q$, the translate may be written canonically as
\[
M_Q=\lambda H,
\qquad
H:=\lambda^{-1}M_Q,
\]
where $H$ is an algebraic subtorus.

Choose an algebraic group isomorphism
\[
\iota:(\CC^*)^d\to H
\]
given by monomials
\[
\iota(u)=(u^{m_0},u^{m_1},u^{m_2},u^{m_3}),\qquad m_j\in\ZZ^d.
\]
Since $\iota$ is an isomorphism onto $H$, the coordinate characters $u^{m_0},\ldots,u^{m_3}$ generate the character lattice of $(\CC^*)^d$; in the chosen basis, $m_0,\ldots,m_3$ generate $\ZZ^d$.
Define
\[
E(z)=\iota^{-1}(\lambda^{-1}Q(z)).
\]
Then $Q_j=\lambda_jE^{m_j}$, and $E$ is Zariski dense because $Q$ is Zariski dense in $M_Q$. For real $x$, $\lambda^{-1}Q(x)\in H\cap(\RR_{>0})^4$.  Lemma~\ref{lem:appendix-positive-real} identifies this positive real locus with $\iota((\RR_{>0})^d)$; equivalently, the monomial inverse preserves positivity because the exponent vectors generate the full character lattice.  Hence $E(x)\in(\RR_{>0})^d$. The same lemma gives the real-symmetry of $E$.
\end{proof}

Assume $d=\rtor(Q)>0$ and write
\[
Q_j=\lambda_jE^{m_j},\qquad \lambda_j>0,\quad m_j\in\ZZ^d.
\]
Define the restricted discriminant on the torus $M_Q$ by
\[
D_M(u):=\DeltaA(\lambda_0u^{m_0},\lambda_1u^{m_1},\lambda_2u^{m_2},\lambda_3u^{m_3}).
\]
Then
\[
D_M\in\RR[u_1^{\pm1},\dots,u_d^{\pm1}],
\]
and the square-discriminant equation becomes
\[
D_M(E(z))=\tau(z)^2.
\]

\begin{lemma}[Rank zero]\label{lem:rank-zero}
If $\rtor(Q)=0$, then $F$ is constant.
\end{lemma}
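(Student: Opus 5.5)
If $\rtor(Q)=0$, then $M_Q$ is a zero-dimensional Zariski closed subset of $(\CC^*)^4$ containing the connected image $Q(\CC)$. The plan is to show from this that $Q$ is constant and then quote Lemma~\ref{lem:constant-data}. A zero-dimensional algebraic set is finite. Since $\CC$ is connected and $Q$ is continuous, $Q(\CC)$ is a connected subset of this finite set, hence a single point $\lambda\in(\CC^*)^4$. Equivalently, by Proposition~\ref{prop:positive-torus} with $d=0$, $M_Q=\{\lambda\}$ with $\lambda\in(\RR_{>0})^4$. In either form, each $Q_j\equiv\lambda_j$ is constant.

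With all four $Q_j$ constant, Lemma~\ref{lem:constant-data} applies directly. The $b_j(Q)$ are constant, so the Gram reconstruction of Lemma~\ref{lem:gram} makes $W$ constant. The identity $Q_0=\langle W,W\rangle+\tau^2$ then makes $\tau^2$ constant, and an entire function with constant square is constant. Hence $F=W+\tau n$ is constant.

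No step here is a real obstacle. The only point needing care is the reading of ``$\dim M_Q=0$'': one must use the fact that the Zariski closure of the image of a connected analytic curve is irreducible, or more simply that a finite set receives only constant maps from $\CC$. Alternatively, the case can be absorbed into Corollary~\ref{cor:finite-order-Q-data}, since constant $Q_j$ trivially have finite order. The direct route through Lemma~\ref{lem:constant-data} is shorter and avoids the rigidity alternatives, so that is the one I would use.
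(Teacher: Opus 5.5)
Your proof is correct and follows the paper's route. The paper notes that rank zero forces every $Q_j$ to be constant and then invokes Lemma~\ref{lem:constant-data}; your connectedness argument, that a finite Zariski closure admits only constant continuous maps from $\CC$, makes explicit the step the paper leaves implicit.
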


\begin{proof}
If $\rtor(Q)=0$, then all $Q_j$ are constant. Lemma \ref{lem:constant-data} implies that $F$ is constant.
\end{proof}

\begin{lemma}[Finite-order dense exponential curves]\label{lem:dense-exponential}
Let $d\ge1$, and choose real numbers $\alpha_1,\dots,\alpha_d$ linearly independent over $\QQ$. Define
\[
\gamma_\alpha(z)=(e^{\alpha_1z},\dots,e^{\alpha_dz}).
\]
Then $\gamma_\alpha$ is Zariski dense in $(\CC^*)^d$, and each coordinate is real-symmetric and positive on the real axis. Moreover, if $m\in\ZZ^d$ is nonzero, then the character $\gamma_\alpha^m$ is nonconstant.
\end{lemma}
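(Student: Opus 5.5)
The plan treats the three assertions of Lemma~\ref{lem:dense-exponential} separately, starting with the two elementary ones. Real-symmetry and positivity are immediate: each coordinate $e^{\alpha_\ell z}$ with $\alpha_\ell\in\RR$ satisfies $e^{\alpha_\ell\overline z}=\overline{e^{\alpha_\ell z}}$, and it is a positive real number for real $z$. For the character statement, fix $m=(m_1,\dots,m_d)\in\ZZ^d\setminus\{0\}$. Then
\[
\gamma_\alpha^m(z)=\exp\Bigl(\bigl(\textstyle\sum_\ell m_\ell\alpha_\ell\bigr)z\Bigr).
\]
Since the $\alpha_\ell$ are $\QQ$-linearly independent and the $m_\ell$ are integers, not all zero, the exponent $\langle m,\alpha\rangle=\sum_\ell m_\ell\alpha_\ell$ is nonzero. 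Hence $\gamma_\alpha^m$ is a nonconstant exponential.

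For Zariski density, I would argue directly with Laurent polynomials rather than invoke Bloch--Ochiai. Suppose a Laurent polynomial $L=\sum_{m\in S}c_mu^m$, with $S\subset\ZZ^d$ finite and every $c_m\neq0$, vanishes identically on $\gamma_\alpha(\CC)$. Then
\[
\sum_{m\in S}c_m e^{\langle m,\alpha\rangle z}\equiv0 .
\]
The map $m\mapsto\langle m,\alpha\rangle$ is injective on $\ZZ^d$, again because the $\alpha_\ell$ are $\QQ$-linearly independent. So the exponents $\langle m,\alpha\rangle$ for $m\in S$ are pairwise distinct real numbers. Exponentials with distinct exponents are linearly independent over $\CC$, which forces $S=\emptyset$, so $L=0$.

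For that linear independence I would give a short argument rather than cite it. Restrict to real $z=x\to+\infty$, divide by the term with the largest exponent, and let $x\to+\infty$; this shows its coefficient vanishes, and induction on $|S|$ finishes. An alternative is a Vandermonde or Wronskian determinant. Consequently no nonzero regular function on $(\CC^*)^d$ vanishes on the image, so the image is Zariski dense.

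I expect no real obstacle here. The only point needing care is that Zariski closure in the torus is tested by Laurent polynomials, not ordinary polynomials. This is harmless: multiplying a Laurent polynomial by a monomial turns it into an ordinary polynomial without changing its zero set on $(\CC^*)^d$.
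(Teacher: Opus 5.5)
Your proposal is correct and follows the paper's proof: both reduce Zariski density to the identity $\sum_{\nu\in S}c_\nu e^{\langle\nu,\alpha\rangle z}\equiv0$, use $\QQ$-linear independence of the $\alpha_\ell$ to make the exponents distinct, conclude from linear independence of exponentials, and get nonconstancy of $\gamma_\alpha^m$ from $\langle m,\alpha\rangle\neq0$. You also write out the real-symmetry and positivity checks and the dominant-term proof of independence of exponentials, which the paper leaves implicit.
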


\begin{proof}
If a Laurent polynomial $P(u)=\sum_{\nu\in S}c_\nu u^\nu$ vanishes on $\gamma_\alpha$, then
\[
0=P(\gamma_\alpha(z))=\sum_{\nu\in S}c_\nu e^{\langle\nu,\alpha\rangle z}.
\]
The numbers $\langle\nu,\alpha\rangle$ are distinct for distinct $\nu$, by $\QQ$-linear independence. The corresponding exponentials are linearly independent, so all $c_\nu$ vanish. The final assertion follows because $\langle m,\alpha\rangle\neq0$ for every nonzero $m\in\ZZ^d$.
\end{proof}

\begin{proposition}\label{prop:DM-not-zero}
Assume a nonconstant four-point omission datum exists with positive toric rank $d=\rtor(Q)>0$, and let $D_M$ be the restricted discriminant on its toric Zariski closure. Then
\[
D_M\not\equiv0.
\]
\end{proposition}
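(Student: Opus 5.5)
Suppose, for contradiction, that $D_M\equiv0$ as a Laurent polynomial on $(\CC^*)^d$. Then for every curve $\gamma:\CC\to(\CC^*)^d$ the auxiliary data $\tilde Q_j:=\lambda_j\gamma^{m_j}$ satisfy $\DeltaA(\tilde Q)\equiv0$. This is trivially a real-symmetric square, with square root $\tilde\tau\equiv0$. The plan is to pull back along a finite-order dense exponential curve and invoke Corollary~\ref{cor:finite-order-Q-data}. Concretely, choose $\alpha\in\RR^d$ with $\QQ$-linearly independent coordinates and let $\gamma_\alpha$ be as in Lemma~\ref{lem:dense-exponential}. Put $\tilde Q_j=\lambda_j\gamma_\alpha^{m_j}$. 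Each $\tilde Q_j$ is zero-free, real-symmetric, positive on $\RR$ (since $\lambda_j>0$), and of finite order (order at most $1$).

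By Proposition~\ref{prop:reconstruction} applied with $\tilde\tau\equiv0$, the map $\tilde F:=W(\tilde Q)$ is a real-symmetric entire stem function with $\langle\tilde F-p_j,\tilde F-p_j\rangle=\tilde Q_j$. Hence $(\tilde F,(p_j))$ is a four-point omission datum whose $Q$-data have finite order. Corollary~\ref{cor:finite-order-Q-data} then forces $\tilde F$ to be constant. Consequently every $\tilde Q_j$ is constant.

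It remains to contradict this constancy. Since $d>0$ and the $m_j$ generate $\ZZ^d$ (Proposition~\ref{prop:positive-torus}), some $m_k\neq0$. By the last assertion of Lemma~\ref{lem:dense-exponential}, the function $\gamma_\alpha^{m_k}$ is then nonconstant, and so $\tilde Q_k$ is nonconstant. This is the desired contradiction, and therefore $D_M\not\equiv0$.

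There is an alternative route, with no auxiliary curve. If $D_M\equiv0$ then $\tau\equiv0$, so $F=W(Q)$ lies in $V_\CC$ and Alternative-type arguments would be needed directly. The pullback route avoids this by reducing everything to the finite-order corollary. The only point requiring care is that the reconstruction in Proposition~\ref{prop:reconstruction} needs no nonconstancy hypothesis on $\tau$ and permits $\tau\equiv0$. I expect no substantial obstacle beyond this.
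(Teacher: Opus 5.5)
Your proof is correct and is essentially the paper's argument: you pull back along a $\QQ$-independent exponential curve $\gamma_\alpha$, reconstruct a stem function with $\widetilde\tau\equiv0$ via Proposition~\ref{prop:reconstruction}, and contradict Corollary~\ref{cor:finite-order-Q-data} using an index $k$ with $m_k\neq0$. The only difference is cosmetic. You deduce that $\widetilde F$ is constant and then contradict the nonconstancy of $\widetilde Q_k$, whereas the paper argues the contrapositive: $\widetilde Q_k$ nonconstant forces $\widetilde F$ nonconstant, which contradicts the corollary.
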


\begin{proof}
Suppose $D_M\equiv0$. Choose a finite-order Zariski-dense exponential curve $\gamma_\alpha$ as in Lemma \ref{lem:dense-exponential}. Define
\[
\widetilde Q_j(z)=\lambda_j\gamma_\alpha(z)^{m_j}=\lambda_j\exp(\langle m_j,\alpha\rangle z).
\]
Then each $\widetilde Q_j$ is real-symmetric, zero-free, and finite-order, and
\[
\DeltaA(\widetilde Q_0,\widetilde Q_1,\widetilde Q_2,\widetilde Q_3)=0.
\]
Taking $\widetilde\tau\equiv0$, Proposition~\ref{prop:reconstruction} produces an entire stem function with zero-free finite-order quadratic data. Since $d>0$, not all exponent vectors $m_j$ are zero; otherwise the torus parametrization would be constant. Choose an index $j$ with $m_j\ne0$. For the $\QQ$-linearly independent vector $\alpha$ of Lemma~\ref{lem:dense-exponential}, one has $\langle m_j,\alpha\rangle\ne0$, so $\widetilde Q_j$ is nonconstant.  If the reconstructed stem function were constant, all four functions $\langle F-p_j,F-p_j\rangle$ would be constant, contrary to the prescribed nonconstant $\widetilde Q_j$.  Thus the reconstructed stem function is nonconstant, contradicting Corollary~\ref{cor:finite-order-Q-data}.
\end{proof}

\begin{lemma}[Real descent of Laurent squares]\label{lem:real-descent}
Let $D\in\RR[u_1^{\pm1},\dots,u_d^{\pm1}]$ be nonzero. Suppose that over the complex Laurent ring
\[
D=c\,u^\ell S(u)^2,
\]
where $c\in\CC^*$, $\ell\in\ZZ^d$, and $S\in\CC[u_1^{\pm1},\dots,u_d^{\pm1}]$. Then there exist $a\in\RR^*$, $\ell_0\in\ZZ^d$, and $S_0\in\RR[u_1^{\pm1},\dots,u_d^{\pm1}]$ such that
\[
D=a\,u^{\ell_0} S_0(u)^2.
\]
\end{lemma}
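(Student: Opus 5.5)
We use that the Laurent ring $\CC[u^{\pm1}]$ is a UFD whose units are exactly $c\,u^\ell$ with $c\in\CC^*$, and we apply the Galois involution of $\CC/\RR$ to the coefficients. Write $\overline{P}$ for the coefficient-conjugate of a Laurent polynomial $P$. Since $D$ has real coefficients, $\overline D=D$. Conjugating $D=c\,u^\ell S^2$ gives
\[
D=\bar c\,u^\ell\,\overline S^{\,2}.
\]
Hence $\overline S^{\,2}$ and $S^2$ are associates, and by unique factorization $\overline S$ and $S$ are associates, say $\overline S=\epsilon u^{k}S$ with $\epsilon\in\CC^*$, $k\in\ZZ^d$.

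First normalize $S$ so that it is not divisible by any monomial. For instance, choose a coordinate order and multiply $S$ by a monomial so that its exponent support has minimal value $0$ in each coordinate; the leftover monomial is absorbed into $u^\ell$. Its conjugate $\overline S$ has the same support, so comparing supports forces $k=0$. Thus $\overline S=\epsilon S$, and applying conjugation twice gives $|\epsilon|=1$.

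Now choose $\eta\in\CC$ with $\eta^2=\epsilon$ (so $|\eta|=1$), and set $S_0:=\eta S$. Then
\[
\overline{S_0}=\bar\eta\,\epsilon S=\bar\eta\,\eta^{2}S=\eta S=S_0,
\]
so $S_0$ has real coefficients. Moreover $D=c\eta^{-2}u^\ell S_0^2$. Put $a:=c\eta^{-2}$ and $\ell_0:=\ell$.

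It remains to check that $a$ is real. Pick a coefficient of $D$ on a support point where the expansion of $u^\ell S_0^2$ has a nonzero real coefficient. Such a point exists because $S_0\neq0$ is real, so $S_0^2\ne0$ is real, and $u^\ell S_0^2$ is a nonzero real Laurent polynomial. Comparing that coefficient with the corresponding real coefficient of $D$ shows $a\in\RR$. Finally $a\neq0$ because $D\neq0$.

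The only step that needs care is the passage from ``$S^2$ and $\overline S^{\,2}$ are associates'' to ``$\overline S=\epsilon S$ with no monomial factor''. This is handled by the UFD property of the Laurent ring, which follows from $\CC[u^{\pm1}]$ being a localization of a polynomial ring, together with the monomial normalization of $S$. Everything else is a direct coefficient comparison.
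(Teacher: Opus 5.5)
Your proof is correct, and it takes a different route from the paper's. The paper factors $D$ into real irreducibles in $\RR[u_1^{\pm1},\dots,u_d^{\pm1}]$ and shows that each real irreducible factor occurs to an even power. To do this it analyses how a real irreducible $f$ behaves over $\CC$: either $f$ stays irreducible, or $f=b\,h\overline h$ with $h,\overline h$ nonassociated. The real square root $S_0$ is then assembled from the factorization.

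You instead apply conjugation once, globally, to the given complex square root. You get $\overline S=\epsilon S$ with $|\epsilon|=1$, and then rotate to the real $S_0=\eta S$ with $\eta^2=\epsilon$. This is the same unimodular-rotation trick the paper uses inside its proof, via $g=e^{\mathrm i\theta}h$, to show $h\not\sim\overline h$. Applying it directly to $S$ lets you bypass the Galois-descent splitting of real irreducibles. That splitting carries the less obvious claims: a reducible real irreducible splits into exactly two conjugate factors, and distinct real irreducibles share no complex factor. Your route also gives slightly more: the given complex root is itself a unimodular constant times a real Laurent polynomial. The paper's route gives the structural fact that every real irreducible factor has even multiplicity, which fits the branch-part formalism of Section~5 but is not needed for this lemma.

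One simplification: the monomial normalization and the appeal to unique factorization are unnecessary. Cancelling the unit $u^\ell$ in $c\,u^\ell S^2=\overline c\,u^\ell\overline S^{\,2}$ gives $\overline S^{\,2}=(c/\overline c)\,S^2$. So $\overline S/S$ is a root of $X^2-c/\overline c$ in the fraction field and must be one of the two constants $\pm\sqrt{c/\overline c}$. This yields $\overline S=\epsilon S$ immediately, with $|\epsilon|=1$ automatic and $a=c/\epsilon$ satisfying $a^2=|c|^2$, so $a$ is visibly real.
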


\begin{proof}
Put
\[
R=\RR[u_1^{\pm1},\dots,u_d^{\pm1}],\qquad
R_{\CC}=R\otimes_{\RR}\CC.
\]
Both rings are UFDs; the units of $R$ are $a u^m$ with $a\in\RR^*$ and $m\in\ZZ^d$.  Write the factorization of $D$ in $R$ as
\[
D=a u^m\prod_{\beta} f_\beta^{e_\beta},
\]
where $a\in\RR^*$ and the $f_\beta$ are pairwise nonassociated real irreducible Laurent polynomials which are not units.  Fix one factor $f=f_\beta$.

Over $R_{\CC}$ there are two possibilities.  Either $f$ remains irreducible, in which case the multiplicity of this complex irreducible factor in $D$ is $e_\beta$.  Or $f$ is reducible over $\CC$.  Galois descent in the UFD $R_{\CC}$ gives, after multiplication by a Laurent unit,
\[
f=b\,h\,\overline h
\]
with $h$ complex irreducible.  The factors $h$ and $\overline h$ are nonassociated.  Indeed, if $\overline h=c\,u^k h$ with $c\in\CC^*$ and $k\in\ZZ^d$, then conjugating again gives $h=|c|^2u^{2k}h$, whence $k=0$ and $|c|=1$.  Writing $c=e^{2\mathrm i\theta}$, the Laurent polynomial $g=e^{\mathrm i\theta}h$ satisfies $\overline g=g$, so $g\in R$, and $g$ is a proper nonunit factor of $f$ in $R$, contradicting the irreducibility of $f$ over $R$.  Since $h$ cannot divide the complex factorization of a different nonassociated real irreducible factor, the multiplicities of $h$ and $\overline h$ in $D$ are both $e_\beta$.  As $D$ is a square up to a Laurent unit in $R_{\CC}$, every complex irreducible factor appearing in $D$ has even multiplicity.  Hence $e_\beta$ is even in either case.

Thus all non-unit real irreducible factors of $D$ have even exponent.  Therefore
\[
D=a u^m S_0^2
\]
for some $S_0\in R$.  Taking $\ell_0=m$ gives the claimed form.
\end{proof}

\begin{proposition}\label{prop:DM-not-square}
Assume a nonconstant four-point omission datum exists with positive toric rank $d=\rtor(Q)>0$. Then the restricted discriminant $D_M$ cannot be written as
\[
D_M=c\,u^\ell S(u)^2
\]
with $c\in\CC^*$, $\ell\in\ZZ^d$, and $S\in\CC[u_1^{\pm1},\dots,u_d^{\pm1}]$.
\end{proposition}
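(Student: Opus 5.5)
Suppose, for contradiction, that $D_M=c\,u^\ell S(u)^2$. The plan is to pull this factorization back along a finite-order dense exponential curve. That produces finite-order zero-free data which Proposition~\ref{prop:reconstruction} turns into a nonconstant finite-order omission datum, contradicting Corollary~\ref{cor:finite-order-Q-data}. This follows the same strategy as Proposition~\ref{prop:DM-not-zero}.

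First, since $D_M\in\RR[u_1^{\pm1},\dots,u_d^{\pm1}]$ is nonzero by Proposition~\ref{prop:DM-not-zero}, Lemma~\ref{lem:real-descent} rewrites the factorization with real data: $D_M=a\,u^{\ell_0}S_0(u)^2$ with $a\in\RR^*$, $\ell_0\in\ZZ^d$, and $S_0$ a real Laurent polynomial. Next, evaluate on the real positive locus. For real $x$, $E(x)\in(\RR_{>0})^d$ by Proposition~\ref{prop:positive-torus}, and $D_M(E(x))=\tau(x)^2\ge0$ because $\tau$ is real-symmetric. Also $u^{\ell_0}>0$ there. Since $E$ is Zariski dense, $S_0(E(x))$ cannot vanish identically on $\RR$: otherwise $S_0\circ E\equiv0$ and hence $D_M\equiv0$. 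It follows that $a>0$.

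Now choose $\QQ$-linearly independent reals $\alpha_1,\dots,\alpha_d$ as in Lemma~\ref{lem:dense-exponential} and set $\widetilde Q_j=\lambda_j\gamma_\alpha^{m_j}$. These functions are real-symmetric, zero-free, of finite order, and positive on $\RR$. Define
\[
\widetilde\tau(z)=\sqrt a\,\exp\bigl(\tfrac12\langle\ell_0,\alpha\rangle z\bigr)\,S_0(\gamma_\alpha(z)).
\]
This function is entire and real-symmetric, because $S_0$ has real coefficients, $\alpha$ is real, and $\sqrt a>0$. It satisfies $\widetilde\tau^2=D_M(\gamma_\alpha)=\DeltaA(\widetilde Q_0,\dots,\widetilde Q_3)$. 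The point of pulling back along $\gamma_\alpha$ rather than $E$ is that $u^{\ell_0}$ acquires the explicit real square root $e^{\langle\ell_0,\alpha\rangle z/2}$, with no parity issue on $\ell_0$. Proposition~\ref{prop:reconstruction} then gives a real-symmetric stem function $\widetilde F$ whose quadratic data are exactly $\widetilde Q_j$, so $(\widetilde F,(p_j))$ is a four-point omission datum.

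It remains to check that $\widetilde F$ is nonconstant. Since $d>0$ and the $m_j$ generate $\ZZ^d$, some $m_j\ne0$. Then $\langle m_j,\alpha\rangle\ne0$, so $\widetilde Q_j$ is nonconstant, and therefore $\widetilde F$ is nonconstant. This contradicts Corollary~\ref{cor:finite-order-Q-data}. The main obstacle is the real-structure issue of ensuring that $\widetilde\tau$ is real-symmetric and not merely an entire square root. This is handled by the real descent lemma together with the sign argument $a>0$; alternatively, one could absorb the sign into a purely imaginary root, but that would break real-symmetry, so the positivity step is essential.
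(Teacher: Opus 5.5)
Your proposal is correct and follows the paper's proof step for step. It uses real descent via Lemma~\ref{lem:real-descent}, then the sign argument on the positive real locus of $E$ to get $a>0$. It then pulls back along $\gamma_\alpha$ with $\widetilde\tau=\sqrt a\,e^{\langle\ell_0,\alpha\rangle z/2}S_0(\gamma_\alpha)$, producing a nonconstant finite-order omission datum that contradicts Corollary~\ref{cor:finite-order-Q-data}. The only cosmetic difference is that you get $a>0$ directly, from a real $x$ with $S_0(E(x))\neq0$, whereas the paper argues by contradiction from $a<0$.
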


\begin{proof}
A nonzero constant Laurent polynomial is a Laurent unit times a square, so the assertion also excludes the constant nonzero case.  Assume that $D_M$ is such a Laurent unit times a square. Since $D_M$ has real coefficients, Lemma \ref{lem:real-descent} gives
\[
D_M=a\,u^{\ell_0} S_0(u)^2
\]
with $a\in\RR^*$, $\ell_0\in\ZZ^d$, and $S_0$ real Laurent. Evaluate on the actual curve $E$. For real $x$, $E(x)\in(\RR_{>0})^d$, so $E(x)^{\ell_0}>0$ and, because $S_0$ has real coefficients, $S_0(E(x))^2\ge0$.  The normal coordinate is real on the real axis, and hence
\[
D_M(E(x))=\tau(x)^2\ge0.
\]
If $a<0$, the identity
\[
\tau(x)^2=aE(x)^{\ell_0}S_0(E(x))^2
\]
forces both sides to vanish for every real $x$.  Thus $S_0(E(x))=0$ for all real $x$, hence $S_0(E)\equiv0$. By Zariski density, $S_0\equiv0$, contradicting Proposition \ref{prop:DM-not-zero}. Therefore $a>0$.

Choose a finite-order Zariski-dense exponential curve $\gamma_\alpha$ as in Lemma \ref{lem:dense-exponential}. Define
\[
\widetilde Q_j(z)=\lambda_j\exp(\langle m_j,\alpha\rangle z)
\]
and
\[
\widetilde\tau(z)=\sqrt a\,\exp\left(\frac12\langle\ell_0,\alpha\rangle z\right)S_0(e^{\alpha_1z},\dots,e^{\alpha_dz}).
\]
Then
\[
\DeltaA(\widetilde Q_0,\widetilde Q_1,\widetilde Q_2,\widetilde Q_3)=\widetilde\tau^2.
\]
The functions $\widetilde Q_j$ are real-symmetric because $\lambda_j>0$ and $\alpha\in\RR^d$; the function $\widetilde\tau$ is real-symmetric because $a>0$ and $S_0$ has real Laurent coefficients.  Since the exponent vectors $m_0,\ldots,m_3$ generate $\ZZ^d$ and $d>0$, at least one $m_j$ is nonzero. By Lemma~\ref{lem:dense-exponential}, the corresponding $\widetilde Q_j$ is nonconstant.  Proposition~\ref{prop:reconstruction} gives a stem function whose four quadratic functions are precisely the $\widetilde Q_j$; since one of them is nonconstant, the stem function itself is nonconstant.  Its four quadratic functions are zero-free and of finite order, contradicting Corollary~\ref{cor:finite-order-Q-data}.
\end{proof}

\section{Laurent branch divisors and the NWY ramification obstruction}
\label{sec:laurent-ramification}

The form of the theorem of Noguchi--Winkelmann--Yamanoi needed here is the following divisor case.  Here $T_g(r;L)$ denotes the Nevanlinna characteristic with respect to a line bundle $L$, and $N_1$ denotes the counting function truncated at level one.

\begin{notation}\label{not:epsilon-exception}
For fixed $\varepsilon>0$, the notation
\[
A(r)\le B(r)\quad \|_\varepsilon
\]
means that the estimate holds for all $r$ outside an exceptional subset of $(0,\infty)$ of finite Lebesgue measure, depending on $\varepsilon$; this is the convention used in the theorem of Noguchi--Winkelmann--Yamanoi.
\end{notation}

\begin{theorem}[Noguchi--Winkelmann--Yamanoi, divisor case]\label{thm:nwy-input}
Let $A$ be a semi-abelian variety, let $g:\CC\to A$ be a Zariski-dense entire curve, and let $\mathcal D\subset A$ be an effective reduced divisor. Then there is a smooth equivariant compactification
\[
\overline A=\overline A(A,\mathcal D)
\]
of $A$ such that, for the closure $\overline{\mathcal D}$ and every $\varepsilon>0$,
\begin{equation}\label{eq:nwy-inequality}
T_g(r;L(\overline{\mathcal D}))\le N_1(r;g^*\mathcal D)+\varepsilon T_g(r;L(\overline{\mathcal D}))\quad\|_\varepsilon.
\end{equation}
\end{theorem}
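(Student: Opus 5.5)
This statement is an external input, so I will not reprove it. The plan is to extract it from \cite[Main Theorem~(iii)]{NoguchiWinkelmannYamanoi2008SMTII} and check that the hypotheses and notation match those used in Section~\ref{sec:laurent-ramification}. In that paper, $A$ is a semi-abelian variety and $f:\CC\to A$ is a holomorphic curve with Zariski-dense image. For an effective divisor $D$ on $A$, they construct a smooth equivariant compactification $\overline A$, depending only on $(A,D)$ and not on $f$, such that the closure $\overline D$ satisfies
\[
T_f(r;L(\overline D))\le N_1(r;f^*D)+\varepsilon T_f(r;L(\overline D))\quad\|_\varepsilon .
\]
The first step is to specialize this to our setting. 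Here $g$ is an entire curve, $A$ will be an algebraic torus $(\CC^*)^{d'}$ (the quotient torus of Section~\ref{sec:laurent-ramification}), and $\mathcal D$ is the reduced branch divisor.

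The second step is to check the conventions. I would confirm the following three points.
\begin{enumerate}[label=\textup{(\alph*)}]
\item The characteristic $T_g(r;L)$ is the order function defined with respect to a smooth Hermitian metric on $L(\overline{\mathcal D})$ over the compact manifold $\overline A$. It is therefore well defined up to $O(1)$, and this ambiguity is absorbed into the $\varepsilon T_g$ term.
\item $N_1$ is the level-one truncated counting function of the pullback divisor $g^*\mathcal D$. This is well defined because $g(\CC)\not\subset\mathcal D$, by Zariski density.
\item The exceptional-set convention of Notation~\ref{not:epsilon-exception} agrees with the ``$\|_\varepsilon$'' used by Noguchi--Winkelmann--Yamanoi, namely a set of finite Lebesgue measure depending on $\varepsilon$.
\end{enumerate}
The reducedness of $\mathcal D$ is not needed for the inequality itself. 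It matters only because the later application compares $N_1$ with $\tfrac12 N$, and the untruncated counting function $N$ must be that of $g^*\mathcal D$ for a reduced $\mathcal D$.

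The main point to watch is the dependence of the compactification $\overline A$ on $\mathcal D$. In Noguchi--Winkelmann--Yamanoi, level-one truncation is obtained only after passing to a suitable equivariant compactification adapted to $\mathcal D$, typically via a toroidal resolution making $\overline{\mathcal D}$ meet the boundary properly. It is not available on an arbitrary compactification such as $(\PP^1)^{d'}$. I would therefore state the result exactly in the form $\overline A=\overline A(A,\mathcal D)$.

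This has a consequence for the later argument. The bigness of $L(\overline{\mathcal D})$ and the First Main Theorem bound $N(r;g^*\mathcal D)\le T_g(r;L(\overline{\mathcal D}))+O(1)$ must both be established on this same smooth toric compactification. They are not automatic on a different model. This is the role of the appendix lemmas: a full-dimensional Newton polytope gives bigness on every smooth toric compactification refining the normal fan. Verifying that the NWY compactification can be taken toric and smooth, and refined further if necessary without losing \eqref{eq:nwy-inequality}, is where I expect the only real care to be needed. The inequality pulls back under equivariant blow-ups up to bounded terms, since $\overline{\mathcal D}$ changes only by boundary components that $g$ avoids, so such a refinement should be harmless.
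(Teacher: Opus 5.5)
Your route is the paper's: the statement is quoted from \cite[Main Theorem~(iii)]{NoguchiWinkelmannYamanoi2008SMTII}. Your insistence on a single smooth toric model carrying the inequality, the First Main Theorem and bigness is what Remark~\ref{rem:equivariant-refinement} and the proof of Theorem~\ref{thm:toric-ramification} do.

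However, your claim that reducedness of $\mathcal D$ ``is not needed for the inequality itself'' is false, and it misdescribes the cited result. The NWY divisor case is the $k=0$ specialization of a statement about reduced subvarieties, and level-one truncation fails for nonreduced divisors. For example, take $A=\CC^*$, $\overline A=\PP^1$ (the only smooth equivariant compactification), $\mathcal D=2\cdot\{1\}$ and $g(z)=e^z$. Then $T_g(r;L(\overline{\mathcal D}))=2r/\pi+O(1)$, whereas $N_1(r;g^*\mathcal D)=N_1(r;g^*\{1\})=r/\pi+O(\log r)$. So \eqref{eq:nwy-inequality} fails for every $\varepsilon<1/2$ at all large $r$. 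More generally, if $\mathcal D=2\mathcal D'$ with $L(\overline{\mathcal D'})$ big, the inequality together with the First Main Theorem would force $T_g(r;L(\overline{\mathcal D'}))$ to be bounded, contradicting Zariski density.

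You have also put the role of reducedness in the wrong place. The comparison $N_1\le\frac12N$ holds for any effective divisor whose pullback has all multiplicities at least $2$. Reducedness is needed for the truncated Second Main Theorem itself. That is why the paper applies it to the squarefree branch part $B_D$ and takes the reduced structure downstairs in Lemma~\ref{lem:stabilizer-quotient}. If you applied your version to $\{D_M=0\}$ with multiplicities, it would ``exclude'' perfect squares $D=S^2$ (with $\{S=0\}$ of trivial stabilizer). But such squares are realized as $D(E)=S(E)^2$ for every Zariski-dense $E$.

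Two smaller cautions:
\begin{enumerate}[label=\textup{(\roman*)}]
\item The $O(1)$ ambiguity of $T_g$ is absorbed by $\varepsilon T_g$ only when $T_g\to\infty$. In general it must be carried explicitly, as the paper does.
\item Under refinement, keep the pulled-back bundle $\mu^*L(\overline{\mathcal D})$. The fact that $g$ avoids the boundary part $R$ kills the counting function $N(r;g^*R)$ but not the proximity function $m(r;R)$. For instance, for $e^z$ and $R=\{\infty\}$ one has $m(r;R)=r/\pi$. So $T_g(r;L(R))$ need not be bounded, and replacing $\mu^*\overline{\mathcal D}$ by the strict closure is not a bounded change.
\end{enumerate}
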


This is the divisor case of \cite[Main Theorem~(iii)]{NoguchiWinkelmannYamanoi2008SMTII}.  In the applications $A$ is an algebraic torus; smooth equivariant compactifications are then smooth toric compactifications, and any two are dominated by a common smooth projective fan refinement.

\begin{remark}[Equivariant refinements]\label{rem:equivariant-refinement}
If $\mu:X'\to\overline A$ is a smooth equivariant refinement and $g':\CC\to X'$ is the lift of $g$ through the common open torus, then the open divisor pullback is unchanged:
\[
(g')^*(\mu^*\overline{\mathcal D}|_A)=g^*\mathcal D.
\]
Consequently the truncated and untruncated counting functions for the open divisor are the same before and after refinement; boundary components do not meet the open-torus image of the curve and contribute no counting term.  Functoriality of characteristics gives
\[
T_{g'}(r;\mu^*L(\overline{\mathcal D}))=T_g(r;L(\overline{\mathcal D}))+O(1).
\]
Finally, if $\overline{\mathcal D}'$ is the strict closure of the open divisor on $X'$, then
\[
\mu^*\overline{\mathcal D}=\overline{\mathcal D}'+R
\]
as Cartier divisors for some effective divisor $R$ supported in the boundary and the exceptional locus.  Hence bigness of $L(\overline{\mathcal D}')$ implies bigness of $\mu^*L(\overline{\mathcal D})$.
\end{remark}

Let
\[
D\in\CC[u_1^{\pm1},\dots,u_d^{\pm1}]
\]
be a nonzero Laurent polynomial. Since this Laurent ring is a unique factorization domain, write
\[
D(u)=c\,u^\ell\prod_{\alpha=1}^NR_\alpha(u)^{e_\alpha},
\]
where $c\in\CC^*$, $\ell\in\ZZ^d$, and the $R_\alpha$ are pairwise nonassociated irreducible Laurent polynomials. Define the squarefree branch part of $D$ by
\[
B_D(u):=\prod_{e_\alpha\text{ odd}}R_\alpha(u).
\]
Thus
\[
D=c\,u^\ell S(u)^2B_D(u)
\]
for some Laurent polynomial $S$. The branch part $B_D$ is well-defined only up to Laurent units. It is a Laurent unit if and only if $D$ is a Laurent unit times a square.

\begin{lemma}[Even branch pullback]\label{lem:even-branch}
Let $E:\CC\to(\CC^*)^d$ be an entire curve. Assume
\[
D(E)\not\equiv0
\]
and
\[
D(E)=\tau^2
\]
for some entire function $\tau$. Then every zero of $B_D(E)$ has even multiplicity.
\end{lemma}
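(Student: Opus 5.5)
The plan is a local parity count at each point of $\CC$, using the factorization of $D$ pulled back along $E$. Fix the factorization
\[
D=c\,u^\ell\prod_{\alpha=1}^N R_\alpha^{e_\alpha}
\]
from the definition of $B_D$, and substitute $u=E(z)$. This gives the identity of entire functions
\[
\tau^2=D(E)=c\,E^\ell\prod_{\alpha=1}^N R_\alpha(E)^{e_\alpha}.
\]
Since $D(E)\not\equiv0$, no factor $R_\alpha(E)$ vanishes identically. Hence each $R_\alpha(E)$ is a nonzero entire function and has a well-defined finite order of vanishing $\ord_{z_0}R_\alpha(E)$ at every point $z_0\in\CC$.

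The first step is to observe that the unit factor $c\,E^\ell$ is zero-free. Indeed $c\neq0$, and $E$ takes values in $(\CC^*)^d$, so every monomial $E^\ell$ is a zero-free entire function. Taking orders of vanishing at a fixed point $z_0$ then gives
\[
2\,\ord_{z_0}\tau=\sum_{\alpha=1}^N e_\alpha\,\ord_{z_0}R_\alpha(E).
\]
Next we reduce this identity modulo $2$. The terms with $e_\alpha$ even drop out, and the terms with $e_\alpha$ odd contribute $\ord_{z_0}R_\alpha(E)$ modulo $2$. Since $B_D=\prod_{e_\alpha\text{ odd}}R_\alpha$, we obtain
\[
\ord_{z_0}B_D(E)=\sum_{e_\alpha\text{ odd}}\ord_{z_0}R_\alpha(E)\equiv 2\,\ord_{z_0}\tau\equiv0\pmod 2.
\]
This holds at every $z_0$, in particular at every zero of $B_D(E)$, which is the claim.

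The only point requiring care is well-definedness. $B_D$ is defined only up to Laurent units $c'u^{k}$, and these pull back to zero-free functions, so the zero set of $B_D(E)$ and its multiplicities are independent of the choice. There is no genuine obstacle: the hypothesis $D(E)\not\equiv0$ is used exactly to guarantee that the orders of vanishing are finite.
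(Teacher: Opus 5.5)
Your proof is correct and takes essentially the same route as the paper. The paper pulls back $D=c\,u^\ell S^2B_D$ and reads off $\ord_{z_0}D(E)=2\ord_{z_0}S(E)+\ord_{z_0}B_D(E)$, which is your mod-$2$ count with the even-exponent parts collected into $S^2$.
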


\begin{proof}
Pull back the factorization
\[
D=c\,u^\ell S^2B_D.
\]
Let $z_0$ be a zero of $B_D(E)$.  Since $D(E)\not\equiv0$, neither $S(E)$ nor $B_D(E)$ is identically zero, and all local vanishing orders at $z_0$ are finite.  The factor $E^\ell$ is zero-free, hence
\[
\ord_{z_0}D(E)=2\ord_{z_0}S(E)+\ord_{z_0}B_D(E).
\]
The left side is even because $D(E)=\tau^2$, and the first term on the right is even.  Therefore $\ord_{z_0}B_D(E)$ is even.
\end{proof}

\subsection{Stabilizers of Laurent divisors}

Let $A=(\CC^*)^d$. For an effective divisor $\mathcal D\subset A$, define its connected stabilizer by
\[
\St(\mathcal D)=\{a\in A:a\cdot\mathcal D=\mathcal D\}^0.
\]
The full stabilizer is a closed algebraic subgroup of $A$, so its identity component is well-defined.  The following quotient step is the torus form of the stabilizer reduction used in \cite[Propositions~3.10--3.11]{NoguchiWinkelmannYamanoi2008SMTII}.

\begin{lemma}[Stabilizer quotient]\label{lem:stabilizer-quotient}
Let $B\in\CC[u_1^{\pm1},\dots,u_d^{\pm1}]$ be a reduced Laurent polynomial which is not a Laurent unit, and let $\mathcal D_B=\{B=0\}\subset(\CC^*)^d$. Let $K=\St(\mathcal D_B)$. Then $K\ne(\CC^*)^d$. In particular, there exist a quotient torus
\[
\pi:(\CC^*)^d\to(\CC^*)^r,\qquad r\ge1,
\]
and a reduced Laurent divisor $\widetilde{\mathcal D}\subset(\CC^*)^r$ with trivial connected stabilizer such that
\[
\mathcal D_B=\pi^{-1}(\widetilde{\mathcal D})
\]
as reduced divisors. Moreover, if $E:\CC\to(\CC^*)^d$ is Zariski dense, then
$G=\pi\circ E$ is Zariski dense in $(\CC^*)^r$.
\end{lemma}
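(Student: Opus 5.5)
The plan is to treat the stabilizer $K$ through the standard structure of Laurent divisors on tori, then pass to the quotient torus $(\CC^*)^d/K$.

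\emph{Step 1: $K\neq(\CC^*)^d$.} Since $B$ is not a Laurent unit, $\mathcal D_B$ is a nonempty proper hypersurface. The whole torus acts transitively on itself, so if $K=(\CC^*)^d$, then $a\cdot\mathcal D_B=\mathcal D_B$ for all $a$. Hence $\mathcal D_B$ would be empty or everything, which is a contradiction.

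\emph{Step 2: the quotient and the divisor.} Set $r=d-\dim K\ge1$. A connected closed subgroup $K$ of a torus is a subtorus, so $(\CC^*)^d/K$ is a torus, which we identify with $(\CC^*)^r$. After a monomial change of coordinates in $GL_d(\ZZ)$, we may take $K=\{1\}^r\times(\CC^*)^{d-r}$ and let $\pi$ be projection onto the first $r$ coordinates. Invariance of $\mathcal D_B$ under $K$ means that $B(u',tu'')$ and $B(u',u'')$ define the same reduced divisor for every $t\in K$. Hence $B(u',tu'')=\chi(t)\,u^{k(t)}B(u',u'')$ for a Laurent unit depending on $t$. Since $K$ is connected and the group of exponents is discrete, $k(t)$ is constant, so $k\equiv0$. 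Also $\chi$ is a character of $K$.

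Decompose $B=\sum_\mu B_\mu(u')u''^\mu$. Invariance says that only one $\mu$ occurs, namely the one with $\chi(t)=t^\mu$. So $B=u''^{\mu}\widetilde B(u')$, and after removing the unit $B=\widetilde B(u')$. Put $\widetilde{\mathcal D}=\{\widetilde B=0\}\subset(\CC^*)^r$. Then $\mathcal D_B=\pi^{-1}(\widetilde{\mathcal D})$ set-theoretically. The divisor $\widetilde B$ is reduced, because a repeated factor of $\widetilde B$ in $\CC[u'^{\pm1}]$ would remain a repeated factor of $B$ in the larger Laurent ring.

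\emph{Step 3: trivial connected stabilizer.} If $\St(\widetilde{\mathcal D})$ had positive dimension, then its preimage $\pi^{-1}(\St(\widetilde{\mathcal D}))$ would stabilize $\mathcal D_B$. Its identity component would then be a connected subgroup strictly larger than $K$, since its dimension is $\dim K+\dim\St(\widetilde{\mathcal D})$. This contradicts the maximality of $K$ as the identity component of the stabilizer. Hence $\St(\widetilde{\mathcal D})$ is trivial.

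\emph{Step 4: density of $G=\pi\circ E$.} If a nonzero Laurent polynomial $P$ on $(\CC^*)^r$ vanished on $G$, then $P\circ\pi$ would be a nonzero Laurent polynomial vanishing on $E$. This contradicts the Zariski density of $E$. So $G$ is Zariski dense in $(\CC^*)^r$.

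\emph{Main obstacle.} The delicate point is Step 2: converting the set-theoretic invariance of the zero locus into the algebraic statement that $B$ depends only on $u'$ up to a unit. This is where reducedness of $B$ is needed, since reduced polynomials with the same zero set are associates. It is also where connectedness of $K$ is used, to kill the exponent shift $k(t)$ and to isolate a single isotypic component $B_\mu$.
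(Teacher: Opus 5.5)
Your proof is correct and follows the paper's route. Non-invariance under the full torus gives $K\ne(\CC^*)^d$, $\mathcal D_B$ descends to $(\CC^*)^d/K$, maximality of $K$ gives the descended divisor trivial connected stabilizer, and a nonzero Laurent polynomial pulls back to a nonzero one under the surjection $\pi$. The only difference is that you carry out the descent explicitly in adapted monomial coordinates via $B=\sum_\mu B_\mu(u')u''^\mu$, where the paper simply invokes saturation.

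One small repair: do not justify $k(t)=0$ by an unproved continuity of $t\mapsto k(t)$. Instead note that $B(u',tu'')$ has the same exponent support $S$ as $B$, so $S+k(t)=S$ forces $k(t)=0$ for every $t$.
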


\begin{proof}
Since $B$ is not a Laurent unit, $\mathcal D_B$ is a nonempty proper divisor in the torus. If $K=(\CC^*)^d$, then $\mathcal D_B$ would be invariant under all translations of the torus; the only such reduced closed subsets are $\varnothing$ and the whole torus. This is impossible for a nonempty proper divisor. Hence $K\ne(\CC^*)^d$, and the quotient has dimension $r=d-\dim K\ge1$.

Since $K$ is connected and $\mathcal D_B$ has only finitely many irreducible components, $K$ fixes each component. Thus $\mathcal D_B$ is saturated with respect to the quotient map
\[
\pi:(\CC^*)^d\to(\CC^*)^d/K.
\]
It therefore descends to an effective divisor on the quotient torus. Taking the reduced structure downstairs gives a reduced divisor $\widetilde{\mathcal D}$ satisfying
\[
\mathcal D_B=\pi^{-1}(\widetilde{\mathcal D})
\]
as reduced divisors; the equality is reduced because $\pi$ is a smooth morphism of tori.

If $\widetilde{\mathcal D}$ had a positive-dimensional connected stabilizer $L$ in the quotient, then $\pi^{-1}(L)^0$ would strictly contain $K$ and would stabilize $\mathcal D_B$. This contradicts the maximality of $K=\St(\mathcal D_B)$. Hence $\widetilde{\mathcal D}$ has trivial connected stabilizer.

Finally, Zariski density is preserved under dominant quotient maps. If $\pi\circ E$ were algebraically degenerate, then $E$ would lie in the inverse image of a proper algebraic subset of the quotient.
\end{proof}

The corresponding local multiplicity comparison under the quotient map is the torus form of the same reduction in \cite[Propositions~3.10--3.11]{NoguchiWinkelmannYamanoi2008SMTII}.

\begin{lemma}[Multiplicity under the stabilizer quotient]\label{lem:multiplicity-quotient}
Let $\pi:A\to A'$ be a quotient of algebraic tori, let $\widetilde{\mathcal D}\subset A'$ be a reduced divisor, and suppose that
\[
\mathcal D=\pi^{-1}(\widetilde{\mathcal D})
\]
as reduced divisors.  If $E:\CC\to A$ is an entire curve, $G=\pi\circ E$, and every point of $E^*\mathcal D$ has multiplicity at least $2$, then every point of $G^*\widetilde{\mathcal D}$ has multiplicity at least $2$.
\end{lemma}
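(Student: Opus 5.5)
The plan is to work with local defining equations and use the reducedness of $\mathcal D=\pi^{-1}(\widetilde{\mathcal D})$ to compare local vanishing orders.

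Fix a point $z_0\in\CC$ with $G(z_0)\in\widetilde{\mathcal D}$. Set $w_0=G(z_0)$ and $a_0=E(z_0)$, so that $\pi(a_0)=w_0$. Choose a reduced local defining function $\tilde h$ for $\widetilde{\mathcal D}$ near $w_0$. This can be done globally: take $\tilde h=\widetilde B$, the reduced Laurent polynomial on $A'$ whose zero set is $\widetilde{\mathcal D}$. Then $\widetilde B\circ\pi$ is a Laurent polynomial on $A$ with zero set $\mathcal D$.

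The key claim is that $\widetilde B\circ\pi$ is again reduced, so that it is a defining equation of the reduced divisor $\mathcal D$. I would prove this using that $\pi$ is a smooth surjective morphism. After a monomial change of coordinates, $\pi$ is a coordinate projection
\[
(\CC^*)^{r}\times(\CC^*)^{d-r}\to(\CC^*)^r .
\]
The change of coordinates uses that $\ker\pi$ is a subtorus, hence a direct factor, so we may choose a splitting $A\cong A'\times\ker\pi$. In these coordinates $\widetilde B\circ\pi$ is simply $\widetilde B(u_1,\dots,u_r)$, viewed as a Laurent polynomial in more variables. An irreducible factorization of $\widetilde B$ into distinct irreducibles stays a factorization into distinct irreducibles in the larger Laurent ring, because adding free variables preserves irreducibility and nonassociatedness. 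Hence $\widetilde B\circ\pi$ is squarefree. Since $\mathcal D$ is reduced with the same support, $\widetilde B\circ\pi$ and the reduced equation $B$ of $\mathcal D$ agree up to a Laurent unit.

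Now compute multiplicities. We have
\[
\ord_{z_0}G^*\widetilde{\mathcal D}=\ord_{z_0}\widetilde B(G)=\ord_{z_0}(\widetilde B\circ\pi)(E)=\ord_{z_0}B(E)=\ord_{z_0}E^*\mathcal D ,
\]
where the third equality holds because the Laurent unit is zero-free along $E$. The right-hand side is at least $2$ by hypothesis, since $z_0$ is a point of $E^*\mathcal D$; indeed $E(z_0)\in\pi^{-1}(w_0)\subset\mathcal D$. This gives the claim.

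The main obstacle is justifying that pulling back a reduced equation along $\pi$ stays reduced, i.e.\ the equality as reduced divisors carries over at the level of equations. I would handle this through the splitting of the torus quotient described above. It is also worth noting that $G(\CC)\not\subset\widetilde{\mathcal D}$ is needed for the orders to be finite. If instead $G^*\widetilde{\mathcal D}$ were identically zero, the statement is vacuous or fails to be a divisor; in the application this is excluded by Zariski density.
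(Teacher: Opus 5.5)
Your proof is correct and is essentially the paper's: you pull back a reduced equation of $\widetilde{\mathcal D}$ along $\pi$, observe that up to a unit it is a reduced equation of $\mathcal D$, and compare $\ord_{z_0}(\widetilde B\circ G)=\ord_{z_0}(\widetilde B\circ\pi\circ E)$. The only difference is that you check reducedness of $\widetilde B\circ\pi$ explicitly, via the splitting $A\cong A'\times\ker\pi$ and the fact that adjoining free Laurent variables preserves irreducibility and nonassociatedness, whereas the paper simply cites smoothness of $\pi$; note that your splitting needs $\ker\pi$ connected, which holds in the application (the quotient by the connected stabilizer $K$), and for a general surjection of tori the remaining isogeny is \'etale in characteristic zero, so reducedness still holds.
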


\begin{proof}
Let $z_0\in\CC$ with $G(z_0)\in\widetilde{\mathcal D}$.  Choose a local defining equation $h$ for $\widetilde{\mathcal D}$ near $G(z_0)$.  Since $\pi$ is a smooth morphism of tori and $\mathcal D=\pi^{-1}(\widetilde{\mathcal D})$ as reduced divisors, $h\circ\pi$ is a local defining equation for $\mathcal D$ near $E(z_0)$, up to multiplication by a unit.  Hence
\[
\operatorname{ord}_{z_0}G^*\widetilde{\mathcal D}
=
\operatorname{ord}_{z_0}(h\circ G)
=
\operatorname{ord}_{z_0}(h\circ\pi\circ E)
=
\operatorname{ord}_{z_0}E^*\mathcal D.
\]
The last order is at least $2$ by assumption.
\end{proof}

\subsection{The toric ramification obstruction}

The bigness statement used below is the toric specialization of \cite[Proposition~3.9(ii)]{NoguchiWinkelmannYamanoi2008SMTII}; the proof here uses the Newton polytope.

\begin{lemma}[Toric bigness for trivial stabilizer]\label{lem:toric-bigness}
Let $A=(\CC^*)^r$ and let $\mathcal D=\{B=0\}\subset A$ be a reduced effective divisor, where
\[
B=\sum_{m\in S}c_m u^m\in\CC[u_1^{\pm1},\dots,u_r^{\pm1}]
\]
is a nonzero Laurent polynomial.  If $\St(\mathcal D)=\{1\}$, then the Newton polytope $\Newt(B)\subset\RR^r$ is full-dimensional.  Moreover, if $X_\Sigma$ is any smooth projective toric compactification of $A$ and $\overline{\mathcal D}$ is the strict closure of $\mathcal D$ in $X_\Sigma$, then $L(\overline{\mathcal D})$ is big.  In particular this holds on every smooth projective refinement of the normal fan of $\Newt(B)$.
\end{lemma}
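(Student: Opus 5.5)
The plan has two steps: first show the Newton polytope is full-dimensional, then derive bigness from the intersection-theoretic meaning of the polytope on a toric compactification.

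\emph{Full-dimensionality.} Argue by contraposition. Suppose $\Newt(B)$ lies in a proper affine subspace $m_0+U$ of $\RR^r$. Then all differences $m-m_0$ with $m\in S$ lie in a proper rational subspace $U\subset\RR^r$. Choose a nonzero primitive $v\in\ZZ^r$ with $\langle v,m-m_0\rangle=0$ for all $m\in S$, and let $\lambda_v(t)=(t^{v_1},\dots,t^{v_r})$ be the associated one-parameter subgroup. Then
\[
B(\lambda_v(t)u)=t^{\langle v,m_0\rangle}B(u),
\]
so $\{B=0\}$ is invariant under the positive-dimensional connected subgroup $\lambda_v(\CC^*)$. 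This contradicts $\St(\mathcal D)=\{1\}$. Since $\mathcal D$ is reduced, its zero set determines $B$ up to units, so invariance of the set is the correct notion of stabilization.

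\emph{Bigness.} Let $X_\Sigma$ be a smooth projective toric variety with fan $\Sigma$ and rays $\rho$ with primitive generators $v_\rho$. For a Laurent polynomial $B$, the strict closure of $\{B=0\}$ is linearly equivalent to the divisor
\[
\sum_\rho\Bigl(-\min_{m\in\Newt(B)}\langle m,v_\rho\rangle\Bigr)D_\rho,
\]
up to adding the principal divisor of a character. This follows because $\operatorname{div}(B)$ on $X_\Sigma$ has order $\min_{m\in S}\langle m,v_\rho\rangle$ along $D_\rho$, since the leading form along $\rho$ is generically nonzero on $D_\rho$. The strict closure therefore equals $\operatorname{div}(B)$ minus its boundary part, i.e.\ it is linearly equivalent to $D_P$ with $P=\Newt(B)$. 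The polytope of the torus-invariant divisor $D_P$ contains $P$. More precisely, $P_{D}=\{m:\langle m,v_\rho\rangle\ge \min_P\langle\cdot,v_\rho\rangle\ \forall\rho\}\supseteq P$.

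Toric theory gives $h^0(X_\Sigma,kD)=\#(kP_D\cap\ZZ^r)\ge\#(kP\cap\ZZ^r)$. When $P$ is full-dimensional, the latter grows like $\operatorname{vol}(P)k^r$ with $\operatorname{vol}(P)>0$. Hence $L(\overline{\mathcal D})$ has maximal Iitaka dimension, i.e.\ it is big. The final clause is the special case $\Sigma$ refining the normal fan. On such a refinement $D_P$ is even nef with polytope exactly $P$, so bigness is equivalent to $\operatorname{vol}(P)>0$.

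The main obstacle is the identification of the strict closure with $D_P$ on an arbitrary smooth projective fan not adapted to $P$. Care is needed to confirm that $\operatorname{div}(B)$ has no extra vanishing along the boundary divisors. For this I would compute orders in the affine chart of a maximal cone containing $\rho$, writing $B=u^{m_\rho}\cdot(\text{regular function not vanishing identically on }D_\rho)$, where $m_\rho$ attains the minimum on the face of $P$ selected by $v_\rho$. Only the inequality $h^0\ge\#(kP\cap\ZZ^r)$ is needed, so it suffices to note that each character $u^m$ with $m\in kP$ gives a section of $\mathcal O(kD_P)$.
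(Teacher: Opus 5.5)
Your proposal is correct and follows essentially the same route as the paper. Full-dimensionality of $\Newt(B)$ comes from the one-parameter-subgroup invariance $B(\lambda_v(t)u)=t^{\langle m_0,v\rangle}B(u)$; bigness comes from identifying $\overline{\mathcal D}$ with $-\sum_\rho\min_{m\in S}\langle m,v_\rho\rangle D_\rho$ and bounding $h^0(kD)\ge\#(k\Newt(B)\cap\ZZ^r)\ge ck^r$.

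One small wording fix: compute the order of $B$ along $D_\rho$ on the ray chart $U_\rho$, where $B u^{-m_\rho}$ is genuinely regular and restricts to the nonzero face polynomial. On a maximal-cone chart, $B u^{-m_\rho}$ may have poles along the other coordinate hyperplanes.
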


\begin{proof}
If $\Newt(B)$ were contained in a proper affine hyperplane, then there would be a nonzero integral cocharacter $v\in\ZZ^r$ and an exponent $m_0\in S$ such that
\[
\langle m-m_0,v\rangle=0\qquad(m\in S).
\]
For the corresponding one-parameter subgroup $\lambda_v:\CC^*\to A$, this gives
\[
B(\lambda_v(t)u)=t^{\langle m_0,v\rangle}B(u).
\]
Hence the divisor $\{B=0\}$ is invariant under the positive-dimensional subtorus $\lambda_v(\CC^*)$, contradicting the triviality of the connected stabilizer. Thus $\Newt(B)$ is full-dimensional.

Let $X_\Sigma$ be a smooth projective toric compactification with rays $\rho$ and primitive ray generators $v_\rho$.  As a rational function on $X_\Sigma$, $B$ has boundary order
\[
\min_{m\in S}\langle m,v_\rho\rangle
\]
along the boundary divisor $D_\rho$.  Hence the strict closure of $\mathcal D$ is linearly equivalent to the torus-invariant divisor
\[
D_B=-\sum_\rho \min_{m\in S}\langle m,v_\rho\rangle D_\rho .
\]
The associated polytope contains $\Newt(B)$, because every $m\in\Newt(B)$ satisfies
\[
\langle m,v_\rho\rangle\ge \min_{m'\in S}\langle m',v_\rho\rangle
\]
for all $\rho$.  Since $\Newt(B)$ is full-dimensional, the polytope of $D_B$ is full-dimensional.  The toric divisor--polytope correspondence gives
\[
h^0(X_\Sigma,\mathcal O(kD_B))
=\#(kP_{D_B}\cap\ZZ^r)
\]
for $k\gg0$; in particular this grows at least like the number of lattice points in $k\Newt(B)$, hence like $c k^r$ for some $c>0$.  Thus $D_B$, and hence $L(\overline{\mathcal D})$, is big.  On a refinement of the normal fan of $\Newt(B)$, this is the usual toric line bundle attached to $\Newt(B)$.  This is the standard correspondence between torus-invariant divisors, support functions and polytopes; see \cite[\S~3.4]{Fulton1993Toric} and \cite[Proposition~4.3.3 and Ch.~9]{CoxLittleSchenck2011Toric}.
\end{proof}

The counting function $N_1$ is truncated at level one, whereas $N$ counts multiplicities.  The exceptional-set notation is that of Notation~\ref{not:epsilon-exception}.  All characteristic functions below are computed with respect to the chosen equivariant compactification and the indicated line bundle.

The ramification obstruction also uses the bounded-characteristic criterion for big line bundles, stated as Lemma~\ref{lem:appendix-big-characteristic}.

\begin{theorem}[Toric ramification obstruction]\label{thm:toric-ramification}
Let $A=(\CC^*)^r$ and let $\mathcal D\subset A$ be an effective reduced divisor with trivial connected stabilizer:
\[
\St(\mathcal D)=\{1\}.
\]
If $G:\CC\to A$ is Zariski dense, then $G^*\mathcal D$ contains a point of multiplicity one.  Equivalently, $G$ meets $\mathcal D$ and does so somewhere with multiplicity one.
\end{theorem}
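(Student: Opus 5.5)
We argue by contradiction, following the outline in the introduction. Suppose $G:\CC\to A=(\CC^*)^r$ is Zariski dense and every point of $G^*\mathcal D$ has multiplicity at least $2$; this includes the case where $G$ misses $\mathcal D$ altogether. Write $\mathcal D=\{B=0\}$ with $B$ reduced. Since $\St(\mathcal D)=\{1\}$ and $r\ge1$ (the statement is vacuous for $r=0$), $\mathcal D$ is a nonempty proper divisor. Lemma~\ref{lem:toric-bigness} then shows that $\Newt(B)$ is full-dimensional.

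First we fix the compactification. Theorem~\ref{thm:nwy-input} supplies a smooth equivariant compactification $\overline A$ on which \eqref{eq:nwy-inequality} holds. By Remark~\ref{rem:equivariant-refinement}, we may pass to a common smooth projective refinement $X'$ of this fan and of the normal fan of $\Newt(B)$. Under this passage the inequality is preserved up to $O(1)$ in the characteristic, and the counting functions do not change. Lemma~\ref{lem:toric-bigness} shows that $L(\overline{\mathcal D}')$ is big on $X'$, so $\mu^*L(\overline{\mathcal D})$ is big as well. We therefore work on $X'$ with $L=\mu^*L(\overline{\mathcal D})$.

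Next we run the counting argument. The multiplicity assumption gives $N_1(r;G^*\mathcal D)\le\frac12N(r;G^*\mathcal D)$ pointwise in $r$. The First Main Theorem for the effective divisor $\mu^*\overline{\mathcal D}$ gives
\[
N(r;G^*\mathcal D)\le N(r;G^*\mu^*\overline{\mathcal D})\le T_G(r;L)+O(1).
\]
Here $G$ is not contained in $\operatorname{supp}\mu^*\overline{\mathcal D}$, by Zariski density, and boundary components contribute nothing because $G$ maps into the open torus. We take $\varepsilon=\frac14$ in \eqref{eq:nwy-inequality} and combine these bounds:
\[
T_G(r;L)\le \tfrac12T_G(r;L)+\tfrac14T_G(r;L)+O(1)\quad\|_{1/4},
\]
that is, $T_G(r;L)=O(1)$ outside a set of finite measure. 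Since $T_G(r;L)$ is increasing in $r$ (up to $O(1)$) and the exceptional set has finite measure, this bound extends to all $r$. So $G$ has bounded characteristic with respect to a big line bundle.

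Finally, Lemma~\ref{lem:appendix-big-characteristic} says that an entire curve with bounded characteristic for a big line bundle on a smooth projective variety is algebraically degenerate. The reason is that bigness writes a multiple of $L$ as ample plus effective, with the effective part having proper support $Z$. If $G(\CC)\not\subset Z$, the ample characteristic is bounded, so $G$ is constant; otherwise $G$ lies in $Z$. Either way this contradicts Zariski density in $A$. Hence some point of $G^*\mathcal D$ has multiplicity one, and in particular $G$ meets $\mathcal D$.

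We expect the main obstacle to be bookkeeping for the compactification rather than any analytic step. The NWY compactification is dictated by the theorem, while bigness is verified on a Newton-polytope refinement. So we must check three things carefully: that the inequality, the First Main Theorem, and bigness all refer to the same line bundle on a single compactification; that the strict transform versus the total pullback $\mu^*\overline{\mathcal D}$ gives the inequality in the correct direction; and that the exceptional-set convention does not interfere with deducing bounded characteristic.
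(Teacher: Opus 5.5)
Your proposal is correct and follows the paper's proof essentially step for step. It uses the same common smooth projective refinement dominating the NWY compactification and the normal fan of $\Newt(B)$, and it obtains bigness of $\mu^*L(\overline{\mathcal D})$ as strict closure plus an effective boundary-exceptional divisor; it then combines $N_1\le\frac12N$, the First Main Theorem, $\varepsilon<1/2$, and Lemma~\ref{lem:appendix-big-characteristic}.

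One small caveat: your claim that $T_G(r;L)$ is increasing up to $O(1)$ is not justified for a big but possibly non-nef $L$. The step is also unnecessary, because Lemma~\ref{lem:appendix-big-characteristic} handles the exceptional set itself by passing to the ample part $H$, whose characteristic is monotone.
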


\begin{proof}
Assume on the contrary that $G^*\mathcal D$ is either empty or has all multiplicities at least $2$.  Put the NWY compactification and the toric compactification used for bigness on a common model, using the convention of Remark~\ref{rem:equivariant-refinement}.  Apply Theorem~\ref{thm:nwy-input} and let $\overline A_N$ be the smooth equivariant compactification it supplies, with $\overline{\mathcal D}_N$ the closure of $\mathcal D$.  Choose a reduced Laurent equation $B$ for $\mathcal D$.  Let $X$ be a smooth projective toric compactification which dominates $\overline A_N$ and also refines the normal fan of $\Newt(B)$; this is obtained from a common smooth projective fan refinement.  Write
\[
\mu:X\to\overline A_N
\]
for the induced toric morphism and let $G':\CC\to X$ be the lift of $G$ through the open torus.

Set
\[
L':=\mu^*L(\overline{\mathcal D}_N).
\]
Pulling back the NWY inequality \eqref{eq:nwy-inequality} to $X$ gives the same open counting functions.  Indeed, $G'(\CC)\subset A$, and the restriction of $\mu^*\overline{\mathcal D}_N$ to $A$ is exactly $\mathcal D$; hence boundary and exceptional components of $X\setminus A$ contribute no zeros to the open pullback.  Therefore
\[
N_1(r;G'^*(\mu^*\overline{\mathcal D}_N)|_A)=N_1(r;G^*\mathcal D),\qquad
N(r;G'^*(\mu^*\overline{\mathcal D}_N)|_A)=N(r;G^*\mathcal D),
\]
and functoriality of characteristic functions gives
\[
T_{G'}(r;L')=T_G(r;L(\overline{\mathcal D}_N))+O(1).
\]
Consequently, for every $\varepsilon>0$,
\[
T_{G'}(r;L')
\le
N_1(r;G^*\mathcal D)+\varepsilon T_{G'}(r;L')+O(1)\quad\|_\varepsilon.
\]

The same line bundle $L'$ is big.  Let $\overline{\mathcal D}_X$ be the strict closure of $\mathcal D$ in $X$.  Since $\mu$ is an isomorphism over the open torus,
\[
\mu^*\overline{\mathcal D}_N=\overline{\mathcal D}_X+R
\]
as Cartier divisors, where $R$ is effective and supported on boundary or exceptional divisors.  By Lemma~\ref{lem:toric-bigness}, $L(\overline{\mathcal D}_X)$ is big: the connected stabilizer of $\mathcal D$ is trivial, so $\Newt(B)$ is full-dimensional.  Adding the effective divisor $R$ preserves bigness.  Hence $L'$ is a big line bundle on $X$.

Now use the ramification hypothesis.  Since all multiplicities of $G^*\mathcal D$ are at least $2$,
\[
N_1(r;G^*\mathcal D)\le\frac12N(r;G^*\mathcal D).
\]
The First Main Theorem for the divisor corresponding to $L'$ gives
\[
N(r;G^*\mathcal D)\le T_{G'}(r;L')+O(1).
\]
Substitution in the pulled-back NWY inequality yields
\[
T_{G'}(r;L')
\le
\left(\frac12+\varepsilon\right)T_{G'}(r;L')+O(1)\quad\|_\varepsilon.
\]
Choose $0<\varepsilon<1/2$. Then
\[
T_{G'}(r;L')=O(1)\quad\|_\varepsilon.
\]
The curve $G'$ is Zariski dense in $X$, because its image in the open torus is the Zariski-dense curve $G$.  This contradicts Lemma~\ref{lem:appendix-big-characteristic}, which says that a Zariski-dense curve cannot have bounded characteristic with respect to a big line bundle outside finite-measure exceptional sets.  The contradiction proves the theorem.
\end{proof}

\begin{lemma}[Zariski density prevents identically vanishing branch pullback]\label{lem:branch-not-identically-zero}
Let $B\in\CC[u_1^{\pm1},\dots,u_d^{\pm1}]$ be a nonzero Laurent polynomial and let
\[
E:\CC\to(\CC^*)^d
\]
be Zariski dense. Then $B(E)\not\equiv0$.
\end{lemma}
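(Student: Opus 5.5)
The statement is essentially the definition of Zariski density, so the proof will be a direct contrapositive argument. Suppose that $B(E)\equiv0$ on $\CC$. Then the image $E(\CC)$ lies in the zero locus
\[
Z(B)=\{u\in(\CC^*)^d:B(u)=0\}.
\]
This set is Zariski closed in the torus, since it is cut out by a single regular function on the affine variety $(\CC^*)^d$. The Zariski closure $\overline{E(\CC)}^{\Zar}$ is the smallest Zariski-closed set containing the image, so it would be contained in $Z(B)$.

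The remaining step is to show that $Z(B)$ is a proper subset of $(\CC^*)^d$ when $B\ne0$. The coordinate ring $\CC[u_1^{\pm1},\dots,u_d^{\pm1}]$ of the torus is an integral domain, and the torus is an irreducible variety that is Zariski dense in $\CC^d$. Hence a regular function vanishing at every point of $(\CC^*)^d$ is the zero element of the coordinate ring. To make this concrete, write $B=u^{-k}P$ with $P$ an honest polynomial, where $u^{-k}$ is a monomial unit. If $B$ vanished on the whole torus, then $P$ would vanish on the dense open set $(\CC^*)^d\subset\CC^d$, hence on all of $\CC^d$, so $P=0$ and therefore $B=0$, a contradiction.

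It follows that $\overline{E(\CC)}^{\Zar}\subset Z(B)\subsetneq(\CC^*)^d$, which contradicts the Zariski density of $E$. Therefore $B(E)\not\equiv0$. There is no real obstacle here. The only point needing care is the passage from Laurent polynomials to ordinary polynomials by clearing monomial denominators, which is harmless because monomials are units on the torus.
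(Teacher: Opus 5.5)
Your argument is correct and matches the paper's proof: if $B(E)\equiv0$, then $E(\CC)$ lies in the proper Zariski-closed set $\{B=0\}\subset(\CC^*)^d$, which contradicts density. Your check that this zero set is proper, by clearing monomial denominators, fills in a step the paper takes for granted. It also covers the case where $B$ is a nonzero constant and the zero set is empty.
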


\begin{proof}
If $B(E)\equiv0$, then $E(\CC)$ is contained in the proper algebraic hypersurface $\{B=0\}\subset(\CC^*)^d$, contradicting Zariski density.
\end{proof}

\begin{theorem}[Laurent even-pullback obstruction]\label{thm:laurent-even-pullback}
Let
\[
D\in\CC[u_1^{\pm1},\dots,u_d^{\pm1}]
\]
be a nonzero Laurent polynomial. Assume that $D$ is not a Laurent unit times a square. Then there do not exist a Zariski-dense entire curve $E:\CC\to(\CC^*)^d$ and an entire function $\tau$ such that
\[
D(E)=\tau^2.
\]
\end{theorem}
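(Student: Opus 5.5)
The argument is by contradiction: we chain Lemma~\ref{lem:branch-not-identically-zero}, Lemma~\ref{lem:even-branch}, Lemma~\ref{lem:stabilizer-quotient}, Lemma~\ref{lem:multiplicity-quotient} and Theorem~\ref{thm:toric-ramification}. Suppose a Zariski-dense $E:\CC\to(\CC^*)^d$ and an entire $\tau$ satisfy $D(E)=\tau^2$.

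First, since $D$ is nonzero and $E$ is Zariski dense, Lemma~\ref{lem:branch-not-identically-zero} gives $D(E)\not\equiv0$. We factor $D=c\,u^\ell S^2B_D$. Because $D$ is not a Laurent unit times a square, the branch part $B_D$ is not a Laurent unit. By construction $B_D$ is reduced, being a product of pairwise nonassociated irreducibles. Lemma~\ref{lem:even-branch} now shows that every zero of $B_D(E)$ has even multiplicity, hence multiplicity at least $2$. The same density argument gives $B_D(E)\not\equiv0$, so these orders are finite.

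Next, let $\mathcal D_B=\{B_D=0\}$, a nonempty proper reduced divisor. We apply Lemma~\ref{lem:stabilizer-quotient} to obtain a quotient $\pi:(\CC^*)^d\to(\CC^*)^r$ with $r\ge1$, and a reduced divisor $\widetilde{\mathcal D}$ with trivial connected stabilizer such that $\mathcal D_B=\pi^{-1}(\widetilde{\mathcal D})$. The same lemma shows that $G=\pi\circ E$ is Zariski dense.

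We must check that the hypothesis of Lemma~\ref{lem:multiplicity-quotient} holds, namely that every point of $E^*\mathcal D_B$ has multiplicity at least $2$. This needs the multiplicities of $E^*\mathcal D_B$ to agree with the vanishing orders of $B_D(E)$. They do: $B_D$ is reduced, so it is a local defining equation of $\mathcal D_B$ up to a unit at every point of the torus. Lemma~\ref{lem:multiplicity-quotient} then gives that every point of $G^*\widetilde{\mathcal D}$ has multiplicity at least $2$, and the pullback may also be empty.

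Finally, Theorem~\ref{thm:toric-ramification}, applied to $G$ and $\widetilde{\mathcal D}$, says that $G^*\widetilde{\mathcal D}$ has a point of multiplicity one. This is a contradiction.

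The delicate point is the identification of divisor multiplicities with orders of $B_D(E)$. This rests on reducedness of $B_D$ and on the reduced equality $\mathcal D_B=\pi^{-1}(\widetilde{\mathcal D})$. Both are supplied by the cited lemmas, so the proof is essentially an assembly of earlier results.
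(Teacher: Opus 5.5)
Your proposal is correct and follows the paper's proof step for step. Zariski density gives $D(E)\not\equiv0$ and $B_D(E)\not\equiv0$; Lemma~\ref{lem:even-branch} gives even multiplicities; the stabilizer quotient together with Lemma~\ref{lem:multiplicity-quotient} transfers them to $G^*\widetilde{\mathcal D}$; and Theorem~\ref{thm:toric-ramification} yields the contradiction. Your explicit remark that the squarefree $B_D$ is a local defining equation of $\mathcal D_{B_D}$, so that multiplicities of $E^*\mathcal D_{B_D}$ coincide with vanishing orders of $B_D(E)$, makes precise a point the paper uses only implicitly.
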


\begin{proof}
Let $B_D$ be the squarefree branch part of $D$. Since $D$ is not a Laurent unit times a square, $B_D$ is not a Laurent unit. Assume $D(E)=\tau^2$ for a Zariski-dense $E$. Since both $D$ and $B_D$ are nonzero Laurent polynomials and $E$ is Zariski dense, Lemma~\ref{lem:branch-not-identically-zero} gives
\[
D(E)\not\equiv0,
\qquad
B_D(E)\not\equiv0.
\]
Lemma \ref{lem:even-branch} then implies that every zero of $B_D(E)$ has even multiplicity.

Let $\mathcal D_{B_D}=\{B_D=0\}$. Quotient by the connected stabilizer as in Lemma \ref{lem:stabilizer-quotient}. We get a quotient torus $\pi:(\CC^*)^d\to(\CC^*)^r$ and a reduced divisor $\widetilde{\mathcal D}$ with trivial connected stabilizer such that $\mathcal D_{B_D}=\pi^{-1}(\widetilde{\mathcal D})$. Put $G=\pi\circ E$. Then $G$ is Zariski dense.  Lemma~\ref{lem:multiplicity-quotient} carries the even multiplicity of $E^*\mathcal D_{B_D}$ to $G^*\widetilde{\mathcal D}$, so every point of $G^*\widetilde{\mathcal D}$ has multiplicity at least $2$. This contradicts Theorem \ref{thm:toric-ramification}.
\end{proof}

\begin{remark}
The Zariski-density hypothesis in Theorem~\ref{thm:laurent-even-pullback} cannot be dropped: a curve contained in a proper subvariety of the torus may meet a locus on which the branch divisor becomes a square or disappears.  In the four-point problem, density is part of the toric reduction; after passing to torus coordinates, $E$ is dense in the torus on which $D_M$ is defined.
\end{remark}

\begin{proposition}[Toroidal square-discriminant obstruction]\label{prop:toroidal-obstruction}
There is no nonconstant four-point omission datum whose $Q$-curve has positive toric rank.
\end{proposition}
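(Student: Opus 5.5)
We argue by contradiction and assemble the results of Sections~\ref{sec:toroidal} and~\ref{sec:laurent-ramification}. Suppose $(F,(p_0,p_1,p_2,p_3))$ is a nonconstant four-point omission datum and its $Q$-curve has toric rank $d=\rtor(Q)>0$.

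First we apply Proposition~\ref{prop:positive-torus}. It gives $\lambda_j>0$, exponents $m_j\in\ZZ^d$, and a Zariski-dense real-symmetric curve $E:\CC\to(\CC^*)^d$ with $Q_j=\lambda_jE^{m_j}$. We then form the restricted discriminant $D_M\in\RR[u^{\pm1}]$. By the square-discriminant identity of Proposition~\ref{prop:square-identity},
\[
D_M(E)=\DeltaA(Q_0,Q_1,Q_2,Q_3)=\tau^2,
\]
where $\tau$ is the normal coordinate of $F$, and $\tau$ is entire.

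Next we collect the two algebraic facts about $D_M$. Proposition~\ref{prop:DM-not-zero} shows $D_M\not\equiv0$. Proposition~\ref{prop:DM-not-square} shows that $D_M$ is not of the form $c\,u^\ell S(u)^2$ over the complex Laurent ring. That form covers the constant nonzero case, and also every Laurent-unit-times-square case, after the real descent of Lemma~\ref{lem:real-descent}. Both propositions were proved by pulling back along finite-order dense exponential curves and invoking Corollary~\ref{cor:finite-order-Q-data}, so they apply to our datum without any further growth hypothesis.

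With these facts, $D_M$ is a nonzero complex Laurent polynomial that is not a Laurent unit times a square, and $E$ is Zariski dense with $D_M(E)=\tau^2$. This is exactly what Theorem~\ref{thm:laurent-even-pullback} forbids. That gives the contradiction, so no nonconstant omission datum has positive toric rank.

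There is essentially no obstacle left at this stage; the work is in the cited results. The only point to check is that the hypotheses match. Zariski density is required in the torus $(\CC^*)^d$ parametrizing $M_Q$, not in $(\CC^*)^4$, and Proposition~\ref{prop:positive-torus} supplies exactly that. Also, $D_M(E)$ must be the same function as $\DeltaA(Q)$, which holds by the definition of $D_M$ through the monomial substitution.
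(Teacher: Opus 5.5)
Your proposal is correct and is essentially the paper's own proof: Proposition~\ref{prop:positive-torus} supplies the Zariski-dense torus curve $E$ with $D_M(E)=\tau^2$, Propositions~\ref{prop:DM-not-zero} and~\ref{prop:DM-not-square} exclude $D_M\equiv0$ and the Laurent-unit-times-square case, and Theorem~\ref{thm:laurent-even-pullback} then gives the contradiction. A minor wording point: the real descent of Lemma~\ref{lem:real-descent} is used inside the proof of Proposition~\ref{prop:DM-not-square}, not as an extra step here, because $c\,u^\ell S^2$ with $c\in\CC^*$ is already exactly a complex Laurent unit times a square.
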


\begin{proof}
Assume $\rtor(Q)>0$. By Proposition \ref{prop:positive-torus}, write $Q_j=\lambda_jE^{m_j}$ with $E$ Zariski dense. Define $D_M$ as in Section~\ref{sec:toroidal}. Then $D_M(E)=\tau^2$. Proposition \ref{prop:DM-not-zero} gives $D_M\not\equiv0$, and Proposition \ref{prop:DM-not-square} says that $D_M$ is not a Laurent unit times a square. This contradicts Theorem \ref{thm:laurent-even-pullback}.
\end{proof}

\section{Proof of the four-point Picard theorem}
\label{sec:proof}

\begin{proof}[Proof of Theorem \ref{thm:main}]
Assume for contradiction that a nonconstant entire slice regular function $f:\HH\to\HH$ omits four affinely independent values $a_0,a_1,a_2,a_3$.

Translate by $-a_0$ and let $F$ be the real-symmetric entire stem function inducing $f-a_0$. Define
\[
Q_j(z)=\langle F(z)-p_j,F(z)-p_j\rangle,
\qquad j=0,1,2,3.
\]
By Lemma \ref{lem:zero-divisor}, each $Q_j$ is zero-free, and hence $Q=(Q_0,Q_1,Q_2,Q_3)$ defines a curve in $(\CC^*)^4$. By Theorem \ref{thm:discriminant-reduction},
\[
\DeltaA(Q_0,Q_1,Q_2,Q_3)=\tau^2.
\]
Let $M_Q=\overline{Q(\CC)}^{\Zar}$ and $\rtor(Q)=\dim M_Q$.

If $\rtor(Q)=0$, then Lemma~\ref{lem:rank-zero} gives that $F$ is constant, so $f$ is constant, contradiction.

If $\rtor(Q)>0$, then Proposition \ref{prop:toroidal-obstruction} gives a contradiction.

Both alternatives lead to contradictions for a nonconstant $f$. Hence every entire slice regular function omitting four affinely independent quaternionic values is constant.
\end{proof}

\begin{proof}[Proof of Corollary \ref{cor:exact-four-value}]
If the four points are affinely dependent, they lie in a real affine subspace of dimension at most $2$, hence in a real affine $2$-plane. The Bisi--Winkelmann construction gives nonconstant entire slice regular functions omitting any prescribed real affine $2$-plane \cite{BisiWinkelmann2020QuaternionicPicard}. Thus the four points can be omitted by a nonconstant entire slice regular function.

If the four points are affinely independent, Theorem \ref{thm:main} says that any entire slice regular function omitting them is constant.
\end{proof}

\appendix
\section{Toric and Nevanlinna lemmas}
\label{app:technical}

This appendix contains the toric and Nevanlinna facts used in Sections~\ref{sec:toroidal} and~\ref{sec:laurent-ramification}.

\begin{lemma}[Positive real parametrization of subtori]\label{lem:appendix-positive-real}
Let $H\subset (\CC^*)^N$ be an algebraic subtorus of dimension $d$. There is a monomial isomorphism
\[
\iota:(\CC^*)^d\longrightarrow H,
\qquad
\iota(u)=(u^{m_1},\dots,u^{m_N}),
\]
with $m_j\in\ZZ^d$, and this isomorphism identifies the positive real locus $(\RR_{>0})^d$ with
\[
H\cap(\RR_{>0})^N.
\]
Consequently, if $M=\lambda H$ with $\lambda\in(\RR_{>0})^N$ and $Q:\CC\to M$ is an entire curve such that $Q(x)\in(\RR_{>0})^N$ for every $x\in\RR$, then the lifted curve
\[
E:=\iota^{-1}(\lambda^{-1}Q)
\]
satisfies $E(x)\in(\RR_{>0})^d$ for every $x\in\RR$ and is real-symmetric.
\end{lemma}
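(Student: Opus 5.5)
The plan is to use the structure theory of algebraic subtori via character lattices. Since $H\subset(\CC^*)^N$ is a connected diagonalizable group of dimension $d$, its character group $X(H)$ is free of rank $d$. Restricting characters from $(\CC^*)^N$ to $H$ gives a surjection $\ZZ^N=X((\CC^*)^N)\to X(H)$; surjectivity holds because $H$ is a closed subgroup of a diagonalizable group. Choosing a basis of $X(H)\simeq\ZZ^d$ identifies $H$ with $(\CC^*)^d$. Under this identification the coordinate character $u_j|_H$ becomes $u^{m_j}$ for some $m_j\in\ZZ^d$, and the $m_j$ generate $\ZZ^d$. This yields the monomial isomorphism $\iota(u)=(u^{m_1},\dots,u^{m_N})$. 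More concretely, I would present $H$ as the kernel of the characters in a saturated sublattice $\Lambda\subset\ZZ^N$ and take $m_j$ to be the images of $e_j$ under a splitting $\ZZ^N/\Lambda\simeq\ZZ^d$. Saturation ensures the quotient is torsion-free, which is exactly what makes $H$ connected and a torus.

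Next comes the positive locus. Clearly $\iota((\RR_{>0})^d)\subset H\cap(\RR_{>0})^N$. For the converse, since the $m_j$ generate $\ZZ^d$, there are integers $k_{ij}$ with $\sum_j k_{ij}m_j=e_i$. This gives a monomial left inverse $u_i=\prod_j (u^{m_j})^{k_{ij}}$. Hence $\iota^{-1}(h)=\bigl(\prod_j h_j^{k_{ij}}\bigr)_i$ for $h\in H$, and this is positive real whenever all $h_j>0$. So $\iota^{-1}$ maps $H\cap(\RR_{>0})^N$ into $(\RR_{>0})^d$, and bijectivity of $\iota$ gives equality. The same formula shows that $\iota^{-1}$ is a monomial map with integer exponents, hence defined over $\RR$: it commutes with coordinatewise complex conjugation.

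For the consequence, $\lambda^{-1}Q$ takes values in $H$, and $E=\iota^{-1}(\lambda^{-1}Q)$ has components $E_i=\prod_j(\lambda_j^{-1}Q_j)^{k_{ij}}$. These are entire because the $Q_j$ are zero-free. For real $x$ each factor is positive, so $E(x)\in(\RR_{>0})^d$. For real-symmetry, each $Q_j$ is real on $\RR$, hence $Q_j(\bar z)=\overline{Q_j(z)}$ by the identity principle, and the $\lambda_j$ are real. Therefore $E_i(\bar z)=\overline{E_i(z)}$. Alternatively, it suffices to note that $E_i$ is a holomorphic function that is real on $\RR$.

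The main obstacle is the first step: justifying that an algebraic subtorus admits a \emph{monomial} parametrization whose exponent vectors generate the full lattice, rather than merely a finite-index sublattice. I would handle this with the saturated-lattice description, citing the standard equivalence between subtori and saturated sublattices of $\ZZ^N$. Everything afterward is formal.
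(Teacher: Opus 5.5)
Your proposal is correct and follows essentially the same route as the paper's proof. Both identify $H$ with $(\CC^*)^d$ via a basis of its character lattice, so that the exponent vectors $m_j$ generate $\ZZ^d$; both write each coordinate $u_a$ as an integral product of the $u^{m_j}$ to show $\iota^{-1}$ is positive on $H\cap(\RR_{>0})^N$; and both get real-symmetry of $E$ from the identity principle on the real axis. Your saturated-sublattice justification and the explicit formula $E_i=\prod_j(\lambda_j^{-1}Q_j)^{k_{ij}}$ only spell out what the paper states more briefly.
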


\begin{proof}
The character lattice of $(\CC^*)^N$ is $\ZZ^N$. Choosing a basis of the character lattice of $H$ identifies $H$ with $(\CC^*)^d$, and the restrictions of the ambient coordinate characters become monomials $u^{m_j}$. This gives the displayed isomorphism.  Since $\iota$ is an isomorphism, not merely an isogeny, the exponent vectors $m_1,\ldots,m_N$ generate the full lattice $\ZZ^d$.

The inclusion $\iota((\RR_{>0})^d)\subset H\cap(\RR_{>0})^N$ is immediate.  Conversely, if $u\in(\CC^*)^d$ and $\iota(u)\in(\RR_{>0})^N$, then $u^{m_j}>0$ for all $j$.  Since the $m_j$ generate $\ZZ^d$, each coordinate character $u_a$ is an integral product of the characters $u^{m_j}$, and hence $u_a>0$.  Thus $u\in(\RR_{>0})^d$, proving that $\iota$ identifies the positive real loci.

Thus $E(x)\in(\RR_{>0})^d$ for real $x$. For each component $E_a$, the holomorphic functions $E_a(z)$ and $\overline{E_a(\overline z)}$ agree on the real axis; by the identity theorem they agree on all of $\CC$. Hence $E$ is real-symmetric.
\end{proof}

\begin{lemma}[Big characteristic detects Zariski density]\label{lem:appendix-big-characteristic}
Let $X$ be an irreducible normal projective variety, let $L$ be a big line bundle on $X$, and let
$g:\CC\to X$ be a holomorphic curve.  If
\begin{equation*}
T_g(r;L)=O(1)
\end{equation*}
outside an exceptional set of finite Lebesgue measure, then $g$ is algebraically degenerate.  Equivalently, for a Zariski-dense curve, the characteristic with respect to a big line bundle cannot be bounded outside such exceptional sets.
\end{lemma}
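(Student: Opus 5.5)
We argue by contradiction.  Assume $g$ is Zariski dense and $T_g(r;L)\le C$ for all $r$ outside a set $\Omega\subset(0,\infty)$ of finite Lebesgue measure.  The plan is to write a multiple of $L$ as ample plus effective via Kodaira's lemma, bound the characteristic of an ample bundle through embedding coordinates, and conclude that $g$ is constant.

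\textbf{Reduction to an ample bundle.}  Since $L$ is big on the normal projective variety $X$, Kodaira's lemma gives an integer $k\ge1$, an ample line bundle $A$, and an effective Cartier divisor $Z$ with
\[
kL\cong A\otimes\mathcal O(Z).
\]
Since $g$ is Zariski dense, $g(\CC)\not\subset\operatorname{Supp}Z$.  The First Main Theorem then gives $T_g(r;\mathcal O(Z))\ge N(r;g^*Z)+O(1)\ge O(1)$; equivalently, the proximity term $m_g(r;Z)$ is bounded below.  By additivity of characteristics,
\[
T_g(r;A)=kT_g(r;L)-T_g(r;\mathcal O(Z))+O(1)\le kC+O(1)\qquad(r\notin\Omega).
\]
Replacing $A$ by a power, we may assume $A$ is very ample, giving an embedding $\phi:X\hookrightarrow\PP^M$ with $A=\phi^*\mathcal O(1)$.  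Then $T_g(r;A)$ agrees up to $O(1)$ with the Cartan characteristic $T_{\phi\circ g}(r)$.

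\textbf{Removing the exceptional set.}  The function $T_{\phi\circ g}(r)$ is increasing in $r$.  A set of finite measure cannot contain an entire half-line, so there are $r_n\notin\Omega$ with $r_n\to\infty$.  Monotonicity then bounds $T_{\phi\circ g}(r)$ for every $r$.

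\textbf{Conclusion.}  Write $\phi\circ g=[h_0:\dots:h_M]$ with the $h_i$ entire and without common zeros, and pick $i$ with $h_i\not\equiv0$.  For each $j$, the Nevanlinna characteristic of the meromorphic function $h_j/h_i$ is bounded by $T_{\phi\circ g}(r)+O(1)$, so it is bounded.  A meromorphic function on $\CC$ with bounded characteristic is constant, since a nonconstant one satisfies $T(r)\ge c\log r$.  Hence $\phi\circ g$, and therefore $g$, is constant.  Since $\dim X\ge1$ in every application (and "degenerate" is vacuous otherwise), a constant curve is not Zariski dense.  This contradiction proves the lemma.

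\textbf{Main obstacle.}  The delicate point is the lower bound on $T_g(r;\mathcal O(Z))$.  It needs $g(\CC)\not\subset Z$, which is exactly where Zariski density enters, and it needs the First Main Theorem for effective Cartier divisors on a possibly singular normal $X$.  To handle the singular case, we pull back along a resolution $\nu:\widetilde X\to X$ and lift $g$: its image is not contained in the singular locus, and the resulting meromorphic map from $\CC$ to $\widetilde X$ extends holomorphically because $\CC$ is one-dimensional.  Bigness is preserved under $\nu^*$, so the argument can be run on $\widetilde X$.
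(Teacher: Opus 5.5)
Your proposal is correct and is essentially the paper's argument: Kodaira's lemma, the First Main Theorem lower bound for the effective part, passage to a very ample embedding, monotonicity to remove the finite-measure exceptional set, and constancy from a bounded projective characteristic. The only differences are that you remove the exceptional set after embedding and make the last step explicit via the ratios $h_j/h_i$ where the paper cites the classical criterion, and that your resolution detour is unnecessary, since $m_g(r;Z)$ is bounded below for any effective Cartier divisor on the compact space $X$ once $\mathcal O(Z)$ carries a hermitian metric, with no smoothness of $X$ needed.
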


\begin{proof}
Assume that the image of $g$ is Zariski dense.  If $\dim X=0$, then $g$ is constant, so we may assume $\dim X\ge1$.  Since $L$ is big, Kodaira's lemma gives an integer $m>0$, an ample line bundle $H$, and an effective divisor $E_0$ on $X$ such that
\begin{equation*}
mL\cong H\otimes\OO_X(E_0).
\end{equation*}
The curve is Zariski dense, hence it is not contained in $\operatorname{Supp}E_0$.  The First Main Theorem for the effective divisor $E_0$ gives
\begin{equation*}
T_g(r;\OO_X(E_0))
=
N(r;g^*E_0)+m_g(r;E_0)+O(1)
\ge -C_1,
\end{equation*}
for a constant $C_1$, because the counting function is nonnegative and the proximity function of an effective divisor is bounded from below.  Therefore
\begin{equation*}
T_g(r;H)
=
mT_g(r;L)-T_g(r;\OO_X(E_0))+O(1)
\le mT_g(r;L)+C_2.
\end{equation*}
By hypothesis, there are a constant $C_0$ and an exceptional set $S\subset(0,\infty)$ of finite Lebesgue measure such that $T_g(r;L)\le C_0$ for $r\notin S$.  Fix a smooth metric on $H$ with positive curvature form.  Then $r\mapsto T_g(r;H)$ is nondecreasing up to a bounded term.  For arbitrary $r>0$, choose
\begin{equation*}
r'\in [r,r+|S|+1]\setminus S,
\end{equation*}
which is possible since the interval length is larger than the measure of $S$.  Then
\begin{equation*}
T_g(r;H)\le T_g(r';H)+O(1)\le mC_0+C_3
\end{equation*}
for all $r>0$.  Thus $T_g(r;H)$ is bounded.

Choose $k>0$ such that $H^{\otimes k}$ is very ample, and let $\Psi:X\hookrightarrow\PP^N$ be the associated embedding.  Functoriality gives
\begin{equation*}
T_{\Psi\circ g}(r;\OO_{\PP^N}(1))=kT_g(r;H)+O(1),
\end{equation*}
so the projective characteristic of $\Psi\circ g$ is bounded.  The classical bounded-characteristic criterion in projective space then makes $\Psi\circ g$ constant \cite{GriffithsKing1973Nevanlinna,NoguchiWinkelmann2014Nevanlinna}.  Since $\Psi$ is an embedding, $g$ is constant, contradicting Zariski density in the positive-dimensional variety $X$.
\end{proof}

\section{External results}
\label{app:external-inputs}

For reference we record the results from the literature used above.  The first two come from quaternionic slice regular function theory; the others come from toric geometry and semi-abelian value distribution theory.

\begin{enumerate}[label=\textup{(E\arabic*)}]
\item \textbf{Bisi--Winkelmann zero-divisor criterion.}
For a real-symmetric stem function $F$ and a quaternion $c\in\HH$, the induced slice regular function omits $c$ if and only if
\[
Q_c(z)=\langle F(z)-c,F(z)-c\rangle
\]
is zero-free on $\CC$.  This is the consequence of \cite[Proposition~2.2]{BisiWinkelmann2020QuaternionicPicard} used in Lemma~\ref{lem:zero-divisor}.  The interpretation of $Q_c$ as the stem of the normal function $N(f-c)$ belongs to the standard zero theory of slice regular functions \cite{GentiliStoppato2008Zeros,GentiliStoppatoStruppa2013Regular}.

\item \textbf{Bisi--Winkelmann plane-omission and five-point theorem.}
Bisi--Winkelmann construct entire slice regular functions whose images are complements of real affine $2$-planes; in the notation of their paper this follows from \cite[Proposition~5.1, Proposition~6.1 and the subsequent remark]{BisiWinkelmann2020QuaternionicPicard}.  Their five-point theorem, \cite[Theorem~3.5]{BisiWinkelmann2020QuaternionicPicard}, states that five quaternionic values not contained in any real affine $3$-space cannot be omitted by a nonconstant entire slice regular function.  These two statements are recorded in Theorem~\ref{thm:bw-inputs}.

\item \textbf{Logarithmic Bloch--Ochiai for semi-abelian varieties.}
The required statement is that the Zariski closure of an entire curve in a semi-abelian variety is a translate of a semi-abelian subvariety.  This is the logarithmic Bloch--Ochiai theorem of Noguchi \cite[Main Theorem~(i)]{Noguchi1998HolomorphicCurves}; see also \cite{NoguchiWinkelmannYamanoi2002SMT,DethloffLu2001LogarithmicJetBundles} for related formulations.  For $(\CC^*)^4$, this identifies $M_Q=\overline{Q(\CC)}^{\Zar}$ as a translated algebraic subtorus, as used in Proposition~\ref{prop:positive-torus}.

\item \textbf{NWY level-one truncated Second Main Theorem.}
The divisor case of \cite[Main Theorem~(iii)]{NoguchiWinkelmannYamanoi2008SMTII} is used in the form
\[
T_G(r;L(\overline{\mathcal D}))
\le
N_1(r;G^*\mathcal D)
+
\varepsilon T_G(r;L(\overline{\mathcal D}))
\quad \|_\varepsilon,
\]
for a Zariski-dense entire curve $G$ into a semi-abelian variety and an effective reduced divisor $\mathcal D$.  Here the ambient semi-abelian variety is always a torus.  When a common smooth equivariant refinement is used, the curve still lies in the open torus; the open divisor pullback and the truncated counting function are unchanged, boundary components add no counting term, and functoriality gives the characteristic comparison up to $O(1)$.  The proof of Theorem~\ref{thm:toric-ramification} keeps track of the same line bundle after refinement: the pulled-back NWY line bundle equals the strict Laurent closure plus an effective boundary-exceptional divisor.

\item \textbf{Toric bigness of Laurent divisors.}
The bigness needed in Theorem~\ref{thm:toric-ramification} is supplied by Lemma~\ref{lem:toric-bigness}, the toric specialization of \cite[Proposition~3.9(ii)]{NoguchiWinkelmannYamanoi2008SMTII}.  For a reduced Laurent divisor in a torus, trivial connected stabilizer forces the Newton polytope to be full-dimensional, and the strict closure on a smooth projective toric compactification has big associated line bundle.  Adding the effective boundary-exceptional divisor coming from the NWY compactification preserves bigness.  The toric background is the divisor--polytope correspondence and the bigness criterion for toric divisors; see \cite[\S~3.4]{Fulton1993Toric} and \cite[Proposition~4.3.3 and Ch.~9]{CoxLittleSchenck2011Toric}.

\item \textbf{Big characteristic and algebraic degeneracy.}
Lemma~\ref{lem:appendix-big-characteristic} gives the implication from bounded characteristic for a big line bundle to algebraic degeneracy.  It is used at the end of Theorem~\ref{thm:toric-ramification}; the projective-space input is the classical bounded-characteristic criterion from Nevanlinna theory for projective varieties \cite{GriffithsKing1973Nevanlinna,NoguchiWinkelmann2014Nevanlinna}.
\end{enumerate}

\section*{Acknowledgments}
This work was supported by the National Natural Science Foundation of China (No.~12171448).


\begin{thebibliography}{99}

  \bibitem{BisiWinkelmann2020QuaternionicPicard}
  C. Bisi, J. Winkelmann: \emph{On a quaternionic Picard theorem}. Proc. Amer. Math. Soc., Ser. B \textbf{7}, 106--117 (2020). \url{https://doi.org/10.1090/bproc/54}
  
  \bibitem{BisiWinkelmann2020Harmonicity}
  C. Bisi, J. Winkelmann: \emph{The harmonicity of slice regular functions}. J. Geom. Anal. \textbf{31}, 7773--7811 (2021). \url{https://doi.org/10.1007/s12220-020-00551-7}
  
  \bibitem{Boas1954EntireFunctions}
  R.P. Boas: \emph{Entire Functions}. Pure and Applied Mathematics, vol. 5. Academic Press, New York (1954)
  
  \bibitem{ColomboGentiliSabadiniStruppa2009Extension}
  F. Colombo, G. Gentili, I. Sabadini, D.C. Struppa: \emph{Extension results for slice regular functions of a quaternionic variable}. Adv. Math. \textbf{222}, no.5, 1793--1808 (2009). \url{https://doi.org/10.1016/j.aim.2009.06.015}
  
  \bibitem{ColomboSabadiniStruppa2011FunctionalCalculus}
  F. Colombo, I. Sabadini, D.C. Struppa: \emph{Noncommutative Functional Calculus: Theory and Applications of Slice Hyperholomorphic Functions}. Progress in Mathematics, vol.289. Birkh\"auser/Springer Basel AG, Basel (2011). \url{https://doi.org/10.1007/978-3-0348-0110-2}
  
  \bibitem{CoxLittleSchenck2011Toric}
  D.A. Cox, J.B. Little, H.K. Schenck: \emph{Toric Varieties}. Graduate Studies in Mathematics, vol.124. Amer. Math. Soc., Providence, RI (2011). \url{https://doi.org/10.1090/gsm/124}
  
  \bibitem{Cullen1965Integral}
  C.G. Cullen: \emph{An integral theorem for analytic intrinsic functions on quaternions}. Duke Math. J. \textbf{32}, no.1, 139--148 (1965). \url{https://doi.org/10.1215/S0012-7094-65-03212-6}
  
  \bibitem{DethloffLu2001LogarithmicJetBundles}
  G.-E. Dethloff, S.S.-Y. Lu: \emph{Logarithmic jet bundles and applications}. Osaka J. Math. \textbf{38}, no.1, 185--237 (2001)
  
  \bibitem{Fulton1993Toric}
  W. Fulton: \emph{Introduction to Toric Varieties}. Annals of Mathematics Studies, vol.131. Princeton University Press, Princeton, NJ (1993)
  
  \bibitem{GentiliStoppato2008Zeros}
  G. Gentili, C. Stoppato: \emph{Zeros of regular functions and polynomials of a quaternionic variable}. Michigan Math. J. \textbf{56}, no.3, 655--667 (2008). \url{https://doi.org/10.1307/mmj/1231770366}
  
  \bibitem{GentiliStoppatoStruppa2013Regular}
  G. Gentili, C. Stoppato, D.C. Struppa: \emph{Regular Functions of a Quaternionic Variable}. Springer Monographs in Mathematics. Springer, Heidelberg (2013). \url{https://doi.org/10.1007/978-3-642-33871-7}
  
  \bibitem{GentiliStruppa2006Cullen}
  G. Gentili, D.C. Struppa: \emph{A new approach to {Cullen}-regular functions of a quaternionic variable}. C. R. Math. Acad. Sci. Paris \textbf{342}, no.10, 741--744 (2006). \url{https://doi.org/10.1016/j.crma.2006.03.015}
  
  \bibitem{GentiliStruppa2007NewTheory}
  G. Gentili, D.C. Struppa: \emph{A new theory of regular functions of a quaternionic variable}. Adv. Math. \textbf{216}, no.1, 279--301 (2007). \url{https://doi.org/10.1016/j.aim.2007.05.010}
  
  \bibitem{GhiloniPerotti2011Alternative}
  R. Ghiloni, A. Perotti: \emph{Slice regular functions on real alternative algebras}. Adv. Math. \textbf{226}, no.2, 1662--1691 (2011). \url{https://doi.org/10.1016/j.aim.2010.08.015}
  
  \bibitem{GriffithsKing1973Nevanlinna}
  P.A. Griffiths, J. King: \emph{Nevanlinna theory and holomorphic mappings between algebraic varieties}. Acta Math. \textbf{130}, 145--220 (1973). \url{https://doi.org/10.1007/BF02392265}
  
  \bibitem{Levin1980Zeros}
  B.Ya. Levin: \emph{Distribution of Zeros of Entire Functions}, Revised ed. Translations of Mathematical Monographs, vol.5. Amer. Math. Soc., Providence, RI (1980)
  
  \bibitem{Noguchi1998HolomorphicCurves}
  J. Noguchi, \textit{On holomorphic curves in semi-abelian varieties}, Math. Z. \textbf{228} (1998), no.~4, 713--721.

  \bibitem{Noguchi1981LogDerivatives}
  J. Noguchi, \textit{Lemma on logarithmic derivatives and holomorphic curves in algebraic varieties}, Nagoya Math. J. \textbf{83} (1981), 213--233.

  \bibitem{NoguchiWinkelmann2014Nevanlinna}
  J. Noguchi, J. Winkelmann: \emph{Nevanlinna Theory in Several Complex Variables and Diophantine Approximation}. Grundlehren der mathematischen Wissenschaften, vol.350. Springer, Tokyo (2014). \url{https://doi.org/10.1007/978-4-431-54571-2}
  
  \bibitem{NoguchiWinkelmannYamanoi2002SMT}
  J. Noguchi, J. Winkelmann, K. Yamanoi: \emph{The second main theorem for holomorphic curves into semi-abelian varieties}. Acta Math. \textbf{188}, no.1, 129--161 (2002). \url{https://doi.org/10.1007/BF02392797}
  
  \bibitem{NoguchiWinkelmannYamanoi2007Degeneracy}
  J. Noguchi, J. Winkelmann, K. Yamanoi: \emph{Degeneracy of holomorphic curves into algebraic varieties}. J. Math. Pures Appl. \textbf{88}, no.3, 293--306 (2007). \url{https://doi.org/10.1016/j.matpur.2007.05.002}
  
  \bibitem{NoguchiWinkelmannYamanoi2008SMTII}
  J. Noguchi, J. Winkelmann, K. Yamanoi: \emph{The second main theorem for holomorphic curves into semi-abelian varieties. {II}}. Forum Math. \textbf{20}, no.3, 469--503 (2008). \url{https://doi.org/10.1515/FORUM.2008.024}
  
  \bibitem{Perotti2020Jensen}
  A. Perotti, \textit{A four dimensional Jensen formula}, Riv. Math. Univ. Parma (N.S.) \textbf{11} (2020), no.~1, 139--152.
  
\end{thebibliography}
\end{document}